\numberwithin{equation}{section}
\renewcommand{\phi}{\varphi}
\newcommand{\WR}{\mathcal{WR}}
\renewcommand{\wr }{\,{\rm wr}\, }
\newcommand{\C }{\mathcal C}
\newcommand{\e }{\varepsilon }
\renewcommand{\P }{\mathcal P}
\renewcommand{\kappa }{\varkappa}
\renewcommand{\ll }{\langle\hspace{-.7mm}\langle }
\newcommand{\rr }{\rangle\hspace{-.7mm}\rangle }
\newcommand{\NN}{\mathbb N}
\newcommand{\M}{\mathcal M}
\newcommand{\Nn}{\mathcal N}
\newcommand{\ra}{\rightarrow}
\newcommand{\ca}{\curvearrowright}
\newcommand{\A}{\mathcal A}
\newcommand{\D}{\mathcal D}
\newcommand{\Q}{\mathcal Q}
\newcommand{\R}{\mathcal R}
\newcommand{\T}{\mathcal T}
\newcommand{\W}{\mathcal W}
\newcommand{\sU}{\mathscr U}
\newcommand{\sN}{\mathscr N}
\newcommand{\sZ}{\mathscr Z}
\newcommand{\sR}{\mathscr R}
\newcommand{\ZZ}{\mathbb Z}
\newtheorem{thm}{Theorem}[section]
\newtheorem*{thm*}{Theorem}
\newtheorem{cor}[thm]{Corollary}
\newtheorem{lem}[thm]{Lemma}
\newtheorem{claim}[thm]{Claim}
\theoremstyle{definition}
\newtheorem{defn}[thm]{Definition}
\newtheorem{ex}[thm]{Example}
\theoremstyle{remark}
\newtheorem{rem}[thm]{Remark}
\begin{document}

\title{Wreath-like products of groups and their von Neumann algebras III: Embeddings}

\author{Ionu\c t Chifan, Adrian Ioana, Denis Osin and Bin Sun}

\date{}

\maketitle

\begin{abstract}
For a class of wreath-like product groups with property (T), we describe explicitly all the embeddings between their von Neumann algebras. This allows us to provide a continuum of ICC groups with property (T) whose von Neumann algebras are pairwise non (stably) embeddable.
We also give a construction of groups in this class only having inner injective homomorphisms. As an application, we obtain examples of group von Neumann algebras which admit only inner endomorphisms.
\end{abstract}


\section{Introduction}
This paper is a continuation of our works 
\cite{CIOS1,CIOS3} where we introduced a new class of groups, called wreath-like products, and proved several 
rigidity results for their von Neumann algebras. 
A group $G$ is called a {\it wreath-like product} of two groups $A$ and $B$ corresponding to an action $B\curvearrowright I$ if $G$ is an extension of the form 
$$1\longrightarrow \bigoplus_{i\in I}A_i \longrightarrow  G \stackrel{\e}\longrightarrow B\longrightarrow 1,$$
where $A_i\cong A$ and the action of $G$ on $\bigoplus_{i\in I}A_i$ by conjugation satisfies 
$gA_ig^{-1} = A_{\e(g)i}$ for all $i\in I$. 
If $I=B$ and $B\curvearrowright I$ is the left regular action, $G$ is called a \emph{regular wreath-like product} of $A$ and $B$.
The set of wreath-like products of $A$ and $B$ corresponding to $B\curvearrowright I$ (respectively, regular wreath-like products) is denoted by $\WR(A, B\curvearrowright I)$ (respectively, $\WR(A,B)$).

Wreath-like products generalize ordinary wreath products of groups and exhibit similar algebraic properties. However, a notable distinction between these two classes arises in the analytical context. For our purpose, the most significant difference lies in the fact that the ordinary wreath product $A\wr B$ can have Kazhdan's property (T) only in ``degenerate" cases (when $A$ is trivial or $B$ is finite); in contrast, numerous ``non-degenerate" wreath-like products with property (T) exist. Examples of the latter kind arise naturally in the setting of group theoretic Dehn filling as shown in \cite{CIOS1}.  

The main results of \cite{CIOS1,CIOS3} show that von Neumann algebras of wreath-like products with property (T) are remarkably rigid and retain a lot of information about the group. For a countable discrete group, we denote by $\text{L}(G)$ its group von Neumann algebra \cite{MvN36}. Recall that $\text{L}(G)$ is a II$_1$ factor if and only if $G$ is ICC, i.e., all the nontrivial conjugacy classes of $G$ are infinite \cite{MvN43}. As a consequence of 
Connes' work \cite{Co76} all II$_1$ factors $\text{L}(G)$ with $G$ ICC amenable are pairwise isomorphic. 
In contrast, Connes' rigidity conjecture \cite{Co82} predicts every ICC property (T) group $G$ is W$^*$-superrigid,  i.e., if $\text{L}(G)\cong\text{L}(H)$, then $G\cong H$, for any countable discrete group $H$.

Let $\mathcal C_1$ be the class of wreath-like product groups belonging to $\W\R(A,B\curvearrowright I)$, for some nontrivial abelian group $A$, nontrivial ICC subgroup $B$ of a hyperbolic group and action $B\curvearrowright I$ with amenable stabilizers. In \cite[Theorem 1.3]{CIOS1}
we proved that any property (T) group $G\in\mathcal C_1$ is W$^*$-superrigid, thus providing examples of W$^*$-superrigid groups with property (T). 
Subsequently, in \cite{CIOS3}, we studied a wider class of wreath-like products $\mathcal C_2\supset\mathcal C_1$, defined by relaxing the assumptions on $B$ and $B\curvearrowright I$. We showed that property (T) groups $G,H$ from $\mathcal C_2$ satisfy a strong form of Connes' conjecture proposed by Popa in \cite{Po06}: 
any $*$-isomorphism $\theta\colon\text{L}(G)\cong\text{L}(H)$ is induced by a character of $G$ and a group isomorphism $\delta\colon G\cong H$, up to unitary conjugacy. In particular, this result confirmed Jones' conjecture \cite{Jo00} 
that Out$(\text{L}(G))\cong\text{Char}(G)\rtimes\text{Out}(G)$.  
This further allowed us to deduce that any countable group arises as $\text{Out}(\text{L}(G))$, for some ICC property (T) group $G$.

Our goal in this paper is to expand the scope of 
the rigidity phenomena obtained in \cite{CIOS1,CIOS3} 
from {\it isomorphisms} to {\it embeddings}. More precisely, we show that any embedding $\theta\colon\text{L}(G)\hookrightarrow\text{L}(H)$ between any property (T) groups $G$, $H$ from a large subclass of $\mathcal C_1$ is induced by a character of $G$ and an embedding $\delta\colon G\hookrightarrow H$, up to unitary conjugacy. To state our main result, we need an auxiliary definition.

\begin{defn}\label{C_0}
Let $\mathcal C_0$ be the set of all groups  $G\in \mathcal W\mathcal R(A,B\curvearrowright I)$, such that the following conditions hold.
\begin{enumerate} 
\item[(a)] $A$ is nontrivial abelian.  
\item[(b)] $B$ is a nontrivial ICC subgroup of a hyperbolic group and $\emph{C}_{ B}(g)$ is virtually cyclic for all $g\in B\setminus\{1\}$. 
\item[(c)] $\text{Stab}_B(i)$ is amenable for all $i\in I$. 
\end{enumerate}
\end{defn}

\begin{rem}
    By \cite[Lemma 4.11(a)]{CIOS1}, conditions (b) and (c) automatically imply that the action $B\curvearrowright I$ is faithful.
\end{rem}

Our main result is the following.

\begin{thm}\label{main}
Let $G$ be an ICC property (T) group admitting a normal abelian subgroup $C$ such that $\{cgc^{-1}\mid c\in C\}$ is infinite, for all $g\in G\setminus C$. Let $H\in\mathcal C_0$. 

Suppose that, for some projection $p\in\emph{L}(H)$, there exists a unital $*$-homomorphism $\theta\colon \emph{L}(G)\rightarrow p\emph{L}(H)p$. Then $p=1$ and we can find a character $\rho\colon G\rightarrow\mathbb T$, a homomorphism $\delta\colon G\rightarrow H$,  and a unitary element $u\in \emph{L}(H)$ such that $\theta(u_g)= \rho(g) u v_{\delta(g)}u^*$, for all 
 $g\in G$, where $(u_g)_{g\in G}\subset{\rm L}(G)$ and $(v_h)_{h\in H}\subset{\rm L}(H)$ denote the usual generating unitaries. 

 In particular, $\emph{L}(G)\hookrightarrow\emph{L}(H)$ if and only if $G$ embeds into $H$.
 Moreover, $\emph{L}(G)\hookrightarrow_{\emph{s}}\emph{L}(H)$ if and only if there exists a finite index subgroup $G_0<G$ which embeds into $H$.
\end{thm}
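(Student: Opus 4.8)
The plan is to deduce Theorem~\ref{main} from the structural rigidity results of \cite{CIOS1,CIOS3} via Popa's intertwining-by-bimodules machinery, following the template of the W$^*$-superrigidity proofs but adapted to one-sided embeddings. The key conceptual point is that $L(H)$ for $H\in\mathcal C_0$ should carry a distinguished ``wreath-like'' structure — the group von Neumann algebra decomposes as a crossed product $L(\bigoplus_{i\in I}A_i)\rtimes B$ with $L(\bigoplus_{i\in I}A_i)$ playing the role of a Cartan-type subalgebra relative to the core — and the hypothesis on $G$ (the normal abelian $C$ with infinite $C$-conjugacy classes for elements outside $C$) is precisely engineered so that $L(C)\subset L(G)$ is a diffuse abelian subalgebra whose relative commutant structure forces its image under $\theta$ to intertwine into $L(\bigoplus_{i\in I}A_i)$ inside $pL(H)p$.

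\smallskip

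First I would set up the amplification: given $\theta\colon L(G)\to pL(H)p$, consider the inclusion of $L(G)$ (a II$_1$ factor with property (T), hence $\theta(L(G))$ is again a property (T) subfactor) and analyze the position of $\theta(L(C))$ inside $pL(H)p$. Using property (T) of $G$ together with the hyperbolicity-flavored malnormality built into condition (b) of $\mathcal C_0$ (so that $L(H)$ has the relevant rigidity/uniqueness-of-Cartan or ``$s$-malleable deformation'' properties from \cite{CIOS1,CIOS3}), I would show via Popa's spectral-gap and intertwining arguments that $\theta(L(C))\prec_{L(H)} L(\bigoplus_i A_i)$, and in fact, after a unitary conjugation, that $\theta(L(C))$ and its normalizer sit in ``standard position.'' Then the relative commutant / quasi-normalizer calculus — the same one used in \cite{CIOS3} to reconstruct the group from $L(G)\cong L(H)$ — upgrades this to the statement that $\theta(L(G))$ is unitarily conjugate into a ``coordinate'' copy that is recognizably of the form $\rho(g)\, v_{\delta(g)}$ for a character $\rho$ and a \emph{homomorphism} $\delta\colon G\to H$; since $\theta$ is injective and $G$ is ICC, $\delta$ is necessarily injective, and a dimension/trace comparison forces $p=1$. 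The main obstacle I expect is exactly this last upgrade in the purely one-sided setting: the isomorphism proofs in \cite{CIOS3} are free to use rigidity of $L(H)$ pulled back through $\theta^{-1}$, which is unavailable here, so one must instead exploit property (T) of $G$ (transported to $\theta(L(G))$) as the sole source of rigidity on the image side and combine it with the wreath-like structure of the target — this is where the restriction to $\mathcal C_0$ rather than all of $\mathcal C_1$ presumably matters, and where the bulk of the technical work lies.

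\smallskip

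Granting the first three paragraphs of the theorem (the conclusion $\theta(u_g)=\rho(g)\,uv_{\delta(g)}u^*$ with $\delta\colon G\to H$ a homomorphism, injective since $\theta$ is), the final two sentences are then a short formal deduction. For the non-stable equivalence: if $G$ embeds into $H$ via some $\delta_0\colon G\hookrightarrow H$, then $\delta_0$ induces a trace-preserving unital $*$-homomorphism $L(G)\to L(H)$ by $u_g\mapsto v_{\delta_0(g)}$, so $L(G)\hookrightarrow L(H)$; conversely, a unital $*$-homomorphism $L(G)\to L(H)$ is in particular one into $pL(H)p$ with $p=1$, and the theorem hands us the embedding $\delta\colon G\hookrightarrow H$. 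For the stable equivalence: if $L(G)\hookrightarrow_{\mathrm s}L(H)$, there is $t>0$ and a unital $*$-homomorphism $\theta\colon L(G)\to L(H)^t$; writing $L(H)^t\cong pM_n(\mathbb C)\otimes L(H)p\cong p'L(H_0)p'$ is not quite right, so instead I would argue as follows: $L(H)^t$, for $t\le 1$, is of the form $qL(H)q$ for a projection $q\in L(H)$ of trace $t$, and one checks $qL(H)q\cong L(H_0)$ — no; rather, one uses that for $t=1/n$ an integer reciprocal, $L(H)^{1/n}\cong pL(H)p$ with $\mathrm{tr}(p)=1/n$, and since $H\in\mathcal C_0$ is ICC with a well-understood subgroup structure, finite-trace corners of $L(H)$ are handled by passing to the finite-index subgroup picture. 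Concretely: $\theta\colon L(G)\to pL(H)p$ with $\mathrm{tr}(p)=1/n$; the main theorem (applied with this $p$) would give $p=1$, a contradiction unless $t=1$ — so the genuinely stable case must be reduced first by replacing $G$ with a finite index subgroup $G_0<G$ for which $[G:G_0]\cdot t$ lands in the right range, using $L(G_0)\subset L(G)$ with Jones index $[G:G_0]$, whence $L(G_0)\hookrightarrow L(G)\hookrightarrow L(H)^t$ gives $L(G_0)\hookrightarrow L(H)^{[G:G_0]t}$, and choosing $G_0$ so that $[G:G_0]\,t\ge 1$, say $[G:G_0]\,t=1$ (possible when $t$ is a reciprocal of an integer; in general one takes $[G:G_0]$ a multiple) reduces to the non-stable case and yields $G_0\hookrightarrow H$. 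For the reverse implication, if $G_0<G$ has finite index $m$ and embeds into $H$, then $L(G_0)\hookrightarrow L(H)$, and since $L(G)\hookrightarrow L(G_0)^{1/m}$ — because $L(G_0)\subset L(G)$ with index $m$ gives $L(G)\cong (L(G_0))^{?}$; precisely $L(G)$ is isomorphic to a subalgebra of $M_m(\mathbb C)\otimes L(G_0)=L(G_0)^{1/m}$ via the regular-representation-on-cosets construction — we get $L(G)\hookrightarrow L(G_0)^{1/m}\hookrightarrow L(H)^{1/m}$, i.e.\ $L(G)\hookrightarrow_{\mathrm s}L(H)$. I would double-check the amplification bookkeeping ($t>1$ versus $t<1$, and that ``$\hookrightarrow_{\mathrm s}$'' is insensitive to which amplification is used since $(L(H)^t)^s=L(H)^{ts}$), but no further rigidity input is needed beyond the main statement and standard properties of amplifications and finite-index inclusions.
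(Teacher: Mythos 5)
Your first two paragraphs identify the correct general strategy (intertwine $\theta(\text{L}(C))$ into the abelian core $\text{L}(\oplus_i A_i)$ using property (T) of $G$ and the rigidity of crossed products by hyperbolic groups, then reconstruct a group homomorphism), and this is indeed how the paper begins. But the proposal stops exactly where the proof actually starts. The step you describe as ``the relative commutant / quasi-normalizer calculus\dots upgrades this to the statement that $\theta(\text{L}(G))$ is\dots of the form $\rho(g)v_{\delta(g)}$'' is not a calculus one can invoke; in the paper it is carried out by (i) passing to the orbit-equivalence picture and applying a strong rigidity theorem for OE embeddings of actions built over ($\text{Theorem \ref{SOE}}$, a generalization of Popa's strong rigidity), which is what produces the homomorphisms $\varepsilon_i$ on finite index subgroups and, crucially, forces the trace $t$ to be a positive integer --- this, not a ``dimension/trace comparison,'' is why $p=1$; and (ii) a sequence of cocycle-superrigidity and second-cohomology arguments (Theorems \ref{CS} and \ref{jiang}) needed to untwist the resulting unitaries into the exact form $\rho(g)v_{\delta(g)}$, with extra care because $\theta(\text{L}(G))'\cap\text{L}(H)^t$ need not be trivial. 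None of this is supplied or even named in the proposal, so the core of the theorem remains unproved.

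The deduction of the ``Moreover'' (stable) statement in your third paragraph also does not work as written. The main assertion of Theorem \ref{main} only covers unital $*$-homomorphisms into corners $p\text{L}(H)p$ with $p\in\text{L}(H)$, i.e.\ amplification parameter $t\le 1$. Given $\text{L}(G)\hookrightarrow\text{L}(H)^t$ with $t>1$, restricting to $\text{L}(G_0)$ for a finite index subgroup $G_0<G$ yields a \emph{unital} embedding $\text{L}(G_0)\hookrightarrow\text{L}(H)^t$ with the \emph{same} $t$; it does not produce $\text{L}(G_0)\hookrightarrow\text{L}(H)^{[G:G_0]t}$, and in any case multiplying $t$ moves you further from, not closer to, the corner case. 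There is no formal reduction of general $t$ to $t\le1$; the paper instead proves the stronger Theorem \ref{rigembed}, valid for all $t>0$, whose conclusion directly exhibits the finite index subgroup $K<G$ and the injective homomorphisms $\delta_i\colon K\to H$ needed for the stable statement. (Your reverse implication --- $G_0\hookrightarrow H$ of index $m$ gives $\text{L}(G)\hookrightarrow \mathbb M_m(\mathbb C)\otimes\text{L}(G_0)\hookrightarrow\text{L}(H)^m$ --- is correct once the amplification convention is fixed: $\mathbb M_m(\mathbb C)\otimes\mathcal N=\mathcal N^m$, not $\mathcal N^{1/m}$.)
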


Here, following \cite{PV21}, for II$_1$ factors $\mathcal M$, $\mathcal N$, we write $\mathcal M\hookrightarrow \mathcal N$ and say that $\mathcal M$ \textit{embeds } into $\mathcal N$ if there is a unital $*$-homomorphism $\theta\colon \mathcal M\rightarrow \mathcal N$.
We write $\mathcal M\hookrightarrow_{\text{s}} \mathcal N$ and say that $\mathcal M$ \textit{stably embeds} into $\mathcal N$ if there is a unital $*$-homomorphism $\theta\colon \mathcal M\rightarrow \mathcal N^t$, for some $t>0$.

Theorem \ref{main} applies to any property (T) groups $G,H\in\mathcal C_0$.
Indeed,  if $G\in\W\R(A,B\curvearrowright I)$ belongs to $\mathcal C_0$, 
then $G$ is ICC by \cite[Lemma 4.11(b)]{CIOS1}, $\oplus_{i\in I}A<G$ is normal abelian and $\{aga^{-1}\mid a\in\otimes_{i\in I}A\}$ is infinite, for all $g\in G\setminus(\oplus_{i\in I}A)$ by \cite[Lemma 4.4.2]{CIOS3}.
To put Theorem \ref{main} into a better perspective, we note that the problem of describing all embeddings between II$_1$ factors in a given class is extremely challenging, with only a few results available in the literature \cite{Io10,De13,PV21}. 
Theorem \ref{main} makes  progress on this problem by describing all embeddings $\text{L}(G)\hookrightarrow\text{L}(H)$,
for any property (T) groups $G,H\in\mathcal C_0$.

\begin{ex}
Examples of property (T) groups belonging to $\mathcal C_0$ can be obtained in the following way.  Let $K$ be a torsion-free hyperbolic group with property (T) and let $g\in K\setminus\{1\}$. For any $k\in \mathbb N$, we define the quotient group $G=K/[\ll g^k\rr,\ll g^k\rr]$. For any sufficiently large $k\in\mathbb N$, the group $G$ has property (T) and belongs to $\mathcal C_1$ by \cite[Theorem 1.4]{CIOS1}; furthermore, Lemma \ref{Lem:cent} implies that $G\in\mathcal C_0$.
\end{ex}

The proof of Theorem \ref{main} relies on techniques from Popa's deformation/rigidity theory and is given in Section \ref{embeddings}. In fact, we establish a stronger result that applies to amplifications (Theorem \ref{rigembed}), leading to an explicit description of all embeddings $\theta:\text{L}(G)\rightarrow\text{L}(H)^t$, $t>0$ (Theorem \ref{explicit}).  The beginning of the proof of Theorem \ref{rigembed} follows closely the proof of \cite[Theorem 1.3]{CIOS1}. The main novelty lies in the rest of the proof (Claims \ref{P0}-\ref{Ji}), where we address several 
challenges that are not present in \cite{CIOS1}   but arise here as $\theta$ is not assumed surjective and so the relative commutant $\theta(\text{L}(G))'\cap \text{L}(H)^t$ may not be trivial.

We continue with some applications of Theorem \ref{main}.
In \cite[Corollary 1.7]{CIOS1} we proved the existence of a continuum 
of pairwise non-isomorphic ICC property (T) groups which are W$^*$-superrigid. Since these groups belong to the class $\mathcal C_0$, Theorem \ref{main} allows us to strengthen \cite[Corollary 1.7]{CIOS1} as follows.

\begin{cor}\label{continuum}
    There exists a continuum $\{G_i\}_{i\in I}$ of  W$^*$-superrigid, ICC property (T) groups such that $\emph{L}(G_i)\not\hookrightarrow_{\emph{s}}\emph{L}(G_{i'})$, for all $i,i'\in I$ with $i\not=i'$.
\end{cor}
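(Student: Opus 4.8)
The plan is to derive Corollary \ref{continuum} from Theorem \ref{main} together with the continuum of W$^*$-superrigid groups constructed in \cite[Corollary 1.7]{CIOS1}. First I would recall that the groups $\{G_i\}_{i\in I}$ produced in \cite[Corollary 1.7]{CIOS1} form a continuum of pairwise non-isomorphic ICC property (T) groups, each belonging to $\mathcal C_1\subseteq\mathcal C_0$; in particular each $G_i$ is W$^*$-superrigid and each satisfies the hypotheses of Theorem \ref{main} both as a ``source'' group (it is ICC, has property (T), and by the discussion following the statement of Theorem \ref{main} it admits the required normal abelian subgroup $C=\oplus A$) and as a ``target'' group (it lies in $\mathcal C_0$).

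Next I would apply Theorem \ref{main} with $G=G_i$ and $H=G_{i'}$ for $i\neq i'$. The stable-embedding clause of the theorem gives that $\emph{L}(G_i)\hookrightarrow_{\emph{s}}\emph{L}(G_{i'})$ holds if and only if some finite-index subgroup $G_0<G_i$ embeds into $G_{i'}$. So it suffices to arrange that no finite-index subgroup of $G_i$ embeds into $G_{i'}$ when $i\neq i'$. I expect this group-theoretic non-embedding property to be exactly what the construction in \cite[Corollary 1.7]{CIOS1} already delivers (or can be made to deliver): the continuum there is typically built so that the groups are pairwise ``incommensurable'' in a strong sense — e.g. they are constructed via Dehn fillings with relators of controlled length so that any homomorphism between two distinct ones, or from a finite-index subgroup of one into another, is forced to have small (hence, using property (T) and the ICC condition, trivial) image, contradicting finite index. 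If \cite[Corollary 1.7]{CIOS1} is only stated for non-isomorphism, I would note that its proof actually yields this stronger separation, or invoke the relevant lemma from \cite{CIOS1} on rigidity of homomorphisms between such Dehn-filling quotients.

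The main obstacle is precisely this last point: verifying that the continuum of \cite[Corollary 1.7]{CIOS1} has the required ``no finite-index subgroup embeds'' property rather than merely pairwise non-isomorphism. Once that is in hand, the corollary is immediate — for $i\neq i'$ a stable embedding $\emph{L}(G_i)\hookrightarrow_{\emph{s}}\emph{L}(G_{i'})$ would force a finite-index subgroup of $G_i$ to embed into $G_{i'}$, which is excluded; hence $\emph{L}(G_i)\not\hookrightarrow_{\emph{s}}\emph{L}(G_{i'})$, and W$^*$-superrigidity of each $G_i$ is already recorded in \cite[Corollary 1.7]{CIOS1}. I would therefore organize the write-up as: (1) recall the continuum and its properties from \cite{CIOS1}; (2) observe $\mathcal C_1\subseteq\mathcal C_0$ so Theorem \ref{main} applies; (3) upgrade/cite the non-embedding statement; (4) conclude via the stable-embedding clause of Theorem \ref{main}.
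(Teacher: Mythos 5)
Your high-level strategy (combine the continuum from \cite[Corollary 1.7]{CIOS1} with the stable-embedding rigidity theorem) matches the paper's, but there are two genuine gaps. First, the inclusion $\mathcal C_1\subseteq\mathcal C_0$ is backwards: $\mathcal C_0$ is the \emph{smaller} class, obtained from $\mathcal C_1$ by additionally requiring that $C_B(g)$ be virtually cyclic for every $g\in B\setminus\{1\}$ (the VCC condition). So membership of the groups from \cite[Corollary 1.7]{CIOS1} in $\mathcal C_0$ is not automatic; the paper verifies it by noting that the acting group $B$ there arises as a subgroup of a quotient $G/\ll g^k\rr$ of a torsion-free hyperbolic group and invoking Corollary \ref{Cor:VCCquot} to get the VCC property. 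Without this step you cannot apply Theorem \ref{main} (or Theorem \ref{rigembed}) with target $H=G_{i'}$.

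Second, the non-embedding step that you flag as ``the main obstacle'' is exactly where the content lies, and your guess at its mechanism (controlled relator lengths forcing homomorphisms between distinct quotients to have small image) is not how it works; nor does \cite[Corollary 1.7]{CIOS1} hand you a ``no finite-index subgroup embeds'' statement. The actual argument is a torsion argument on the abelian bases: the groups are $V_{\mathcal P}\in\WR(A_{\mathcal P},B)$ with $A_{\mathcal P}=\bigoplus_{p\in\mathcal P}\mathbb Z/p\mathbb Z$, indexed by a continuum of infinite sets of primes forming an antichain ($\mathcal P\not\subset\mathcal P'$ whenever $\mathcal P\neq\mathcal P'$). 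One then applies not the bare ``moreover'' clause of Theorem \ref{main} but the sharper Theorem \ref{rigembed}, whose conclusion guarantees that the finite-index subgroup $W<V_{\mathcal P}$ contains $A_{\mathcal P}$ and that the injective homomorphism $\delta\colon W\to V_{\mathcal P'}$ satisfies $\delta(A_{\mathcal P})\subset A_{\mathcal P'}$. Since $A_{\mathcal P}$ has an element of order $p$ exactly when $p\in\mathcal P$, injectivity forces $\mathcal P\subset\mathcal P'$, contradicting the antichain condition. If you only knew that \emph{some} finite-index subgroup of $V_{\mathcal P}$ embeds somewhere into $V_{\mathcal P'}$, you would have to control the torsion of all of $V_{\mathcal P'}$, not just of $A_{\mathcal P'}$; the refined conclusion of Theorem \ref{rigembed} is what makes the argument work with this choice of antichain.
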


Corollary \ref{continuum} provides the first infinite family of  pairwise non-embeddable II$_1$ factors arising from property (T) groups. We note that results from \cite{CH89,OP03} can be combined to provide arbitrarily large 
finite, but not infinite, such families. 
Recently, Popa and Vaes carried a systematic study of the (stable) embedding relation for separable II$_1$ factors in \cite{PV21}.
In particular, they constructed one-parameter families of mutually non-embeddable separable II$_1$ factors. However, the II$_1$ factors studied in \cite{PV21} do not have property (T).

Theorem \ref{main} also leads to novel computations of invariants of II$_1$ factors.
For a II$_1$ factor $\M$, we denote by $\text{Out}(\M)=\text{Aut}(\M)/\text{Inn}(\M)$ and $\mathcal F(\M)$ the {\it outer automorphism} and {\it fundamental} groups of $\M$.
Following \cite{Jo83},  let $\mathcal I_\mathcal M$ denote the set of all $r\in [1,\infty)\cup\{\infty\}$ such that $\mathcal M$ contains a subfactor $\mathcal N$ of index $[\mathcal M:\mathcal N]=r$. The {\it endomorphism semigroup} $\text{End}(\mathcal M)$ introduced in \cite[Definition 10.4]{Io10} is the set of all unital $*$-homomorphisms $\theta\colon \mathcal M\rightarrow \mathcal M$, and  the {\it fundamental semigroup} $\mathcal F_\text{s}(\mathcal M)$ is the set of all $t>0$ for which there is a unital $*$-homomorphism $\theta\colon \mathcal M\rightarrow \mathcal M^t$. Note that  $\{n^2\mid n\in\mathbb N\}\cup\{\infty\}\subset\mathcal I_\M\subset\mathcal F_{\text{s}}(\M)\cup\{\infty\}$, for every II$_1$ factor $M$ (see Remark \ref{index} for the second inclusion). 

If $G\in\mathcal C_0$ is a property (T) group, 
then a strengthening of Theorem \ref{main} which applies to amplifications (see Theorem \ref{rigembed}) implies that $\mathcal F_{\text{s}}(\text{L}(G))=\mathbb N$.
This recovers the fact that $\mathcal F(\text{L}(G))=\{1\}$ proved in \cite[Theorem 1.3]{CIOS1}.  Using geometric group theory toolbox and our work \cite{CIOS2}, we construct examples of such groups $G$ that admit no  nontrivial characters  and only inner injective homomorphisms. This construction is described in Section 2 (see  Theorem \ref{Thm:Group} and Lemma \ref{trivialout}), and allows to derive the following.

\begin{thm}\label{endo}
There exists an ICC property (T) group $G$ such that $\mathcal M=\emph{L}(G)$ satisfies  $\emph{Out}(\mathcal M)=\{1\}$,  $\emph{End}(\mathcal M)= \emph{Inn}(\mathcal M)$,  $\mathcal F(\mathcal M)=\{1\}$ and $\mathcal F_s(\mathcal M)=\mathbb N$. 
\end{thm}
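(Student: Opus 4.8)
The plan is to separate the proof into a group-theoretic construction, carried out in Section~2, and a short von Neumann algebraic deduction that reads the four invariants off Theorem~\ref{main} together with its amplified strengthening Theorem~\ref{rigembed}. What we need from Section~2 is an ICC group $G\in\mathcal C_0$ with property~(T) which in addition (i)~has no nontrivial characters, i.e.\ every homomorphism $G\to\mathbb T$ is trivial, and (ii)~has the property that every injective homomorphism $G\to G$ is an inner automorphism. Note that, since $G\in\mathcal C_0$, $G$ is ICC and the normal abelian subgroup $C=\bigoplus_{i\in I}A_i<G$ satisfies that $\{cgc^{-1}\mid c\in C\}$ is infinite for every $g\in G\setminus C$ (by \cite[Lemma~4.11(b)]{CIOS1} and \cite[Lemma~4.4.2]{CIOS3}, as recorded after Theorem~\ref{main}); hence all our rigidity results apply with $H=G$.

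Granting such a $G$, set $\mathcal M=\text{L}(G)$. Since $\mathcal C_0\subset\mathcal C_1$, \cite[Theorem~1.3]{CIOS1} gives $\mathcal F(\mathcal M)=\{1\}$, and, as for every property~(T) group in $\mathcal C_0$, Theorem~\ref{rigembed} gives $\mathcal F_s(\mathcal M)=\mathbb N$ (the inclusion $\mathbb N\subseteq\mathcal F_s(\mathcal M)$ also being immediate from the diagonal embeddings $\mathcal M\hookrightarrow M_n(\mathbb C)\otimes\mathcal M=\mathcal M^{n}$, $x\mapsto 1_n\otimes x$). For the endomorphism statement, let $\theta\in\text{End}(\mathcal M)$; being a unital $*$-homomorphism out of a factor, $\theta$ is injective, so Theorem~\ref{main} applied with $H=G$ produces a character $\rho\colon G\to\mathbb T$, a homomorphism $\delta\colon G\to G$, and a unitary $u\in\mathcal M$ with $\theta(u_g)=\rho(g)\,u\,u_{\delta(g)}\,u^{*}$ for all $g\in G$. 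Property~(i) forces $\rho\equiv 1$, so $\theta(u_g)=u\,u_{\delta(g)}\,u^{*}$; if $\delta(g)=1$ then $\theta(u_g)=1=\theta(u_1)$, whence $u_g=1$ and $g=1$ by injectivity of $\theta$, so $\delta$ is injective. By property~(ii), $\delta=\mathrm{Ad}(h)$ for some $h\in G$, hence $\theta(u_g)=u\,u_h\,u_g\,u_h^{*}\,u^{*}=(uu_h)\,u_g\,(uu_h)^{*}$; as $\theta$ and $\mathrm{Ad}(uu_h)$ agree on the generating unitaries, $\theta=\mathrm{Ad}(uu_h)\in\text{Inn}(\mathcal M)$. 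Since $\text{Inn}(\mathcal M)\subseteq\text{End}(\mathcal M)$ trivially, $\text{End}(\mathcal M)=\text{Inn}(\mathcal M)$; restricting to automorphisms gives $\text{Aut}(\mathcal M)=\text{Inn}(\mathcal M)$, i.e.\ $\text{Out}(\mathcal M)=\{1\}$.

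The main obstacle, and the real content of the argument, is the construction of $G$ (Theorem~\ref{Thm:Group}). The plan is to realize $G$ by group-theoretic Dehn filling of a carefully chosen torsion-free hyperbolic group with property~(T) and suitable filling data — in the spirit of the quotients $K/[\ll g^k\rr,\ll g^k\rr]$ of \cite[Theorem~1.4]{CIOS1} discussed after Theorem~\ref{main} — arranged so that, for all sufficiently large filling parameters: $G$ retains property~(T) and lies in $\mathcal C_0$, with condition~(b) of Definition~\ref{C_0} supplied by Lemma~\ref{Lem:cent}; $G$ has trivial abelianization, yielding~(i); and every injective homomorphism $G\to G$ is inner, which is Lemma~\ref{trivialout}. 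The last point is the hardest, and is where \cite{CIOS2} enters: one combines the structural description of endomorphisms of wreath-like products from \cite{CIOS2} with the rigidity of homomorphisms into (relatively) hyperbolic groups and the acylindricity of Dehn fillings to exclude non-inner injective self-maps. Once $G$ with (i) and (ii) is in hand, the computation in the second paragraph applies verbatim, and the theorem follows by taking $\mathcal M=\text{L}(G)$.
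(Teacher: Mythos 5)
Your overall strategy matches the paper's: build a suitable group in Section~2 and then read the four invariants off Theorem~\ref{main}/Theorem~\ref{rigembed}. The deductions of $\mathcal F(\M)=\{1\}$, $\mathcal F_s(\M)=\mathbb N$, and the reduction of $\mathrm{End}(\M)$ to the pair $(\rho,\delta)$ are correct. The gap is in your input (ii): you assume Section~2 produces a group $G$ such that \emph{every injective homomorphism $G\to G$ is inner}, attributing this to Lemma~\ref{trivialout}. That is not what the paper constructs. Lemma~\ref{trivialout} produces a VCC hyperbolic group with trivial Out and trivial abelianization, and Theorem~\ref{Thm:Group}(c) gives the ``injective $\Rightarrow$ inner'' property only for the \emph{hyperbolic quotient} $B=G/\mathbb Z^{(I)}$ (via Moioli's co-Hopf theorem, which applies to hyperbolic groups, not to the wreath-like product $G$, which has an infinite normal abelian subgroup). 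Nowhere does the paper establish, and it is not obvious, that an arbitrary injective endomorphism of $G$ itself is inner; in particular such a map need not a priori preserve $\mathbb Z^{(I)}$.

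The paper closes this gap inside the proof of Theorem~\ref{endo} using extra information you discard: the moreover part of Theorem~\ref{rigembed} gives not just an injective $\delta\colon G\to G$ but also $\delta(\mathbb Z^{(I)})\subset\mathbb Z^{(I)}$. This lets one form the induced map $\beta\colon B\to B$; its kernel is $\delta^{-1}(\mathbb Z^{(I)})/\mathbb Z^{(I)}$, which is abelian (as $\delta$ is injective) and normal, hence trivial because an ICC hyperbolic group has no nontrivial normal abelian subgroups. So $\beta$ is injective, hence inner by Theorem~\ref{Thm:Group}(c), and Corollary~\ref{injectiveOut} (a cocycle-superrigidity statement, which again needs the hypothesis $\delta(A^{(I)})\subset A^{(I)}$) lifts innerness from $B$ back to $G$. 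Your sketch of how \cite{CIOS2} would exclude non-inner injective self-maps of $G$ directly does not correspond to anything proved there; you need the descend-to-$B$-and-lift argument, and for it you must quote Theorem~\ref{rigembed} rather than only the statement of Theorem~\ref{main}.
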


Theorem \ref{endo} provides the first example of a group II$_1$ factor $\mathcal M$ satisfying the properties $\text{End}(\mathcal M)=\text{Inn}(\mathcal M)$ and $\mathcal F_s(\mathcal M)=\mathbb N$.
For certain II$_1$ factors $\mathcal M$, including the group II$_1$ factor $\text{L}(\mathbb Z\wr(\mathbb F_2\times\mathbb F_2))$, a description of $\text{End}(\mathcal M)$ and a proof 
that $\mathcal F_s(\mathcal M)\cap (0,1)=\emptyset$ were given in \cite[Corollary E]{Io10}.
Examples of II$_1$ factors $\mathcal M$ with $\text{End}(\mathcal M)=\text{Inn}(\mathcal M)$ and $\mathcal F_s(\mathcal M)=\mathbb N$ were found in the unpublished preprint \cite{De13} via a technically involved construction. Simpler examples of such II$_1$ factors 
were obtained in \cite[Theorem D]{PV21}.  While the II$_1$ factors from \cite{De13,PV21} have the stronger property that all $*$-homomorphisms $\theta\colon \mathcal M\rightarrow \mathcal M^t$ are trivial, none of them arise from groups as in Theorem \ref{endo}.

Answering a question from \cite{Jo83}, it was shown in \cite{PP86b} (see also \cite{Po86}) that $\mathcal I_\mathcal M$ is countable, for any II$_1$ factor $\mathcal M$ having property (T) in the sense of \cite{Co82,CJ85}.
The problem of computing $\mathcal I_{\text{L}(G)}$ for ICC property (T) groups $G$ was proposed  
in \cite[Problem 5]{dH95}. In the spirit of Connes' rigidity conjecture, we conjecture that $\mathcal F_{\text{s}}({\rm L}(G))=\mathbb N$ and therefore
$\mathcal I_{{\rm L}(G)}\subseteq\mathbb N\cup\{\infty\}$ for every ICC property (T) group $G$.
Theorem \ref{rigembed} confirms this conjecture for every property (T) group $G\in\mathcal C_0$. It remains however an open problem to compute explicitly $\mathcal I_{{\rm L}(G)}$, for even a single ICC property (T) group $G$.

\paragraph{Acknowledgments.} The first author has been supported by the NSF Grants  FRG-DMS-1854194 and DMS-2154637, the second author has been supported by NSF Grants FRG-DMS-1854074 and DMS-2153805 and a Simons Fellowship, the third author has been supported by the NSF grant DMS-1853989, and the fourth author has been supported by the AMS--Simons Travel grant (Grant agreement No. IP00672308). 

\paragraph{Data availability statement.} This manuscript has no associated data.

\section {Wreath-like products with only inner endomorphisms}
\label{groupwithnoinjhom}
The goal of this section is to construct examples of property (T) wreath-like products with only inner endomorphisms, see Theorem \ref{Thm:Group}.
In the first part of the section, we collect the results on hyperbolic and relatively hyperbolic groups necessary for proving Theorem \ref{Thm:Group}.


We assume the reader to be familiar with the notion of a hyperbolic group. A group $G$ is \emph{hyperbolic relative to a subgroup} $H$ if there exists a finite relative presentation of $G$ with respect to $H$ with linear relative Dehn function. This definition works for groups of any cardinality. For countable groups, there are other equivalent definitions. For details, we refer to \cite{Hru,Osi06} and the references therein. 

Recall that a subgroup of a hyperbolic group is said to be \textit{elementary} if it is virtually cyclic. By \cite[Lemma 1.16]{Ols93}, every infinite order element $g$ of a hyperbolic group $G$ is contained in a unique maximal elementary subgroup of $G$ denoted by $E(g)$.  Schur's theorem (see \cite[Theorem 5.7]{Isa}) easily implies that every torsion-free virtually cyclic group is cyclic; alternatively, we can refer to a theorem of Stallings \cite{Sta} asserting that finitely generated, torsion-free, virtually free groups are free. Therefore, $E(g)$ is cyclic for any non-trivial element of a torsion-free hyperbolic group $G$.

The result below can be derived from \cite[Theorem 7.11]{Bow} and is stated explicitly in \cite[Corollary 1.7]{Osi06b}. We attribute it to Bowditch since the arXiv version of \cite{Bow}  appeared long before \cite{Osi06b} was written.

\begin{lem}[Bowditch]\label{Thm:Eg}
Let $G$ be a hyperbolic group. For every infinite order element $g\in G$, the group $G$ is hyperbolic relative to $E(g)$.
\end{lem}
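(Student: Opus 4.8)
The plan is to reduce the lemma to a general criterion for relative hyperbolicity: a hyperbolic group $G$ is hyperbolic relative to a subgroup $H$ whenever $H$ is quasiconvex and \emph{almost malnormal} in $G$, in the sense that $H\cap xHx^{-1}$ is finite for every $x\in G\setminus H$. This criterion is what can be extracted from \cite[Theorem 7.11]{Bow} (and it is recorded, already in the form we need, in \cite[Corollary 1.7]{Osi06b}). So the whole argument comes down to verifying that $H:=E(g)$ is quasiconvex and almost malnormal, after which Lemma~\ref{Thm:Eg} follows by invoking this criterion as a black box.

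For quasiconvexity, I would use that an infinite order element of a hyperbolic group is loxodromic, so that $n\mapsto g^n$ is a quasigeodesic and $\langle g\rangle$ is quasiconvex; since $\langle g\rangle$ has finite index in $E(g)$, and a subgroup at finite Hausdorff distance from a quasiconvex subgroup is again quasiconvex, $E(g)$ is quasiconvex in $G$.

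For almost malnormality, suppose $E(g)\cap xE(g)x^{-1}$ is infinite for some $x\in G$; I claim $x\in E(g)$. Being an infinite subgroup of the virtually cyclic group $E(g)$, this intersection contains an element $y$ of infinite order. Since $y\in E(g)$ and $E(g)$ is itself a maximal elementary subgroup containing $y$ (every infinite subgroup of a virtually cyclic group has finite index), uniqueness in the definition of $E(\cdot)$ gives $E(y)=E(g)$; writing $y=xzx^{-1}$ with $z\in E(g)$ of infinite order and arguing identically, $E(z)=E(g)$, hence by conjugation-equivariance $E(y)=xE(z)x^{-1}=xE(g)x^{-1}$. Therefore $xE(g)x^{-1}=E(g)$, i.e.\ $x$ normalizes $E(g)$. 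To finish, I would show $N_G(E(g))=E(g)$: if $x$ normalizes $E(g)$, then $\langle g\rangle$ and $\langle xgx^{-1}\rangle$ are commensurable finite index subgroups of $E(g)$, so $xg^nx^{-1}=g^{\pm n}$ for some $n>0$, whence $x^2$ lies in the centralizer of $g^n$, which is contained in $E(g^n)=E(g)$ by the classical structure theory of elementary subgroups; then $[\langle E(g),x\rangle:E(g)]\le 2$, so $\langle E(g),x\rangle$ is virtually cyclic, hence elementary, hence equal to $E(g)$ by maximality, giving $x\in E(g)$, a contradiction. All of the facts about $E(\cdot)$ used here are standard and may be cited from \cite{Ols93}.

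The only genuinely deep ingredient is the reduction criterion derived from \cite[Theorem 7.11]{Bow}, which I would not reprove; everything else is bookkeeping with elementary subgroups of hyperbolic groups, and the one subtle point is making sure the ``almost malnormal'' hypothesis of that criterion is checked for \emph{all} $x\notin E(g)$, not merely those normalizing $E(g)$ — which is exactly why the maximality/commensurator properties of $E(g)$ are needed. In practice, since the statement already appears packaged in the required form in \cite[Corollary 1.7]{Osi06b}, the proof can reasonably be given as a one-line citation.
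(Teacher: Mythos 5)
Your proposal is correct and matches the paper's treatment: the paper gives no proof, simply citing \cite[Theorem 7.11]{Bow} and \cite[Corollary 1.7]{Osi06b}, which is exactly the quasiconvex-plus-almost-malnormal criterion you invoke. Your verification that $E(g)$ is quasiconvex and almost malnormal (via $E(y)=E(g)$ for infinite order $y\in E(g)$ and $N_G(E(g))=E(g)$) is the standard argument and is sound.
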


We will also need the following result of Olshanskii from \cite{Ols93}.

\begin{lem}[{\cite[Lemma 3.4]{Ols93}}]\label{Lem:Ols}
For every non-trivial, ICC, hyperbolic group $G$, there exists an infinite order element $g\in G$ such that $E(g)=\langle g\rangle$.
\end{lem}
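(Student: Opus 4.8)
The plan is to reduce the statement to a genericity-type construction, treating the torsion-free case directly. First, a few reductions. Since $G$ is ICC it is infinite, and an infinite virtually cyclic group is never ICC (it has a normal finite-index infinite cyclic subgroup $Z$, whose nonidentity elements have conjugacy classes of size at most $[G:C_G(Z)]\le 2$); hence $G$ is non-elementary. Likewise $G$ has no nontrivial finite normal subgroup, since the nonidentity elements of such a subgroup would have finite conjugacy classes; consequently $\bigcap_{x\in G}xFx^{-1}=\{1\}$ for every finite subgroup $F\le G$. Next, recall from the structure of virtually cyclic groups that, for an infinite-order $g$, the infinite group $E(g)$ sits in an extension $1\to T(g)\to E(g)\to Q\to 1$ with $T(g)$ finite and $Q\cong\mathbb{Z}$ or $Q\cong D_\infty$; here $T(g)$ is the torsion subgroup of the index-$\le 2$ ``orientation-preserving'' subgroup $E^{+}(g)\le E(g)$. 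Thus $E(g)=\langle g\rangle$ exactly when $T(g)=\{1\}$, $Q\cong\mathbb{Z}$, and $g$ generates; and the last condition is free, because if $E(g)\cong\mathbb{Z}$ with generator $g_0$, then $E(g)$ is an elementary subgroup containing $g_0$ while $E(g_0)$ is the unique maximal one, so $E(g_0)=E(g)=\langle g_0\rangle$. Hence it suffices to produce an infinite-order $g$ with $E(g)$ infinite cyclic.

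If $G$ is torsion-free this is immediate from the discussion preceding the lemma: $E(g)$ is then torsion-free virtually cyclic, hence cyclic by the theorem of Schur (or Stallings) recalled above, for any infinite-order $g$. In general, I would fix two independent infinite-order elements $a,b\in G$ (i.e.\ with $E(a)\cap E(b)=\{1\}$), which exist since $G$ is non-elementary, and, for large pairwise distinct integers $n_1,n_2,n_3,n_4$, set $g=a^{n_1}b^{n_2}a^{n_3}b^{n_4}$. For the exponents large enough, $g$ is of infinite order and admits a $g$-invariant quasigeodesic axis $\gamma$ which, up to bounded Hausdorff distance, is a periodic ``staircase'' whose fundamental domain fellow-travels $\mathrm{axis}(a)$ for about $n_1$ steps, then $\mathrm{axis}(b)$ for about $n_2$ steps, and so on. Every element of $E(g)$ permutes the pair $\{g^{+\infty},g^{-\infty}\}\subseteq\partial G$ (as $\langle g\rangle$ has finite index in $E(g)$), hence coarsely preserves $\gamma$ by stability of quasigeodesics; since the length pattern $(n_1,n_2,n_3,n_4)$ and its reverse $(n_4,n_3,n_2,n_1)$ lie in distinct cyclic orbits, no element of $E(g)$ reverses the orientation of $\gamma$, and one also reads off that the $\langle g\rangle$-translation along $\gamma$ is minimal; thus $Q\cong\mathbb{Z}$ and $E(g)=\langle g\rangle\,T(g)$.

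It remains to prove $T(g)=\{1\}$, which is the main obstacle. A nonidentity $t\in T(g)$ has finite order and is orientation-preserving on $\gamma$ while fixing both of its ends, so it coarsely fixes $\gamma$ pointwise; taking $n_1$ larger than a constant depending only on $a$, $t$ then coarsely fixes an arc of length $>n_1|a|$ inside an $a$-block of $\gamma$, i.e.\ inside a translate $c\{1,a,\dots,a^{n_1}\}$ of a long sub-arc of $\mathrm{axis}(a)$. A local-to-global (``axis rigidity'') argument for the stable quasigeodesic $\mathrm{axis}(a)$ forces $c^{-1}tc\in E(a)$, and similarly, using a $b$-block, a conjugate of $t$ lands in $E(b)$; arranging from the outset that the finitely many conjugates of $E(a)$ and $E(b)$ that can arise this way have pairwise trivial finite subgroups --- possible precisely because $\bigcap_{x\in G}xFx^{-1}=\{1\}$ for every finite $F$ --- we conclude $t=1$, so $E(g)=\langle g\rangle$. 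The technical heart, and the only place some ICC-type hypothesis is genuinely needed, is this elimination of torsion: if $G$ had a nontrivial finite normal subgroup $N$, then $N\langle g\rangle$ would be elementary and strictly larger than $\langle g\rangle$ for every infinite-order $g$, so no such $g$ could exist. In the torsion-free case the entire last step is vacuous.
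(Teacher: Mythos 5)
The paper does not prove this lemma at all: it is quoted verbatim from Olshanskii \cite[Lemma 3.4]{Ols93}, so your attempt has to stand on its own, and as written it has two genuine gaps, both located exactly where the real difficulty of Olshanskii's lemma lies. First, you assert that two infinite-order elements $a,b$ with $E(a)\cap E(b)=\{1\}$ ``exist since $G$ is non-elementary.'' Non-elementarity only yields independent elements in the usual sense, i.e.\ with $E(a)\cap E(b)$ \emph{finite} (equivalently $E(a)\neq E(b)$); it does not give trivial intersection. Indeed $F_2\times\mathbb Z/2$ is non-elementary hyperbolic and every maximal elementary subgroup contains the central $\mathbb Z/2$. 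So this step must invoke the ICC hypothesis (triviality of the finite radical $E(G)=\bigcap_g E(g)$), and even then the existence of such a pair is not immediate --- it is essentially of the same depth as the statement being proved and requires an argument you do not supply.

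Second, and more seriously, the torsion-elimination step is a placeholder. Your element $t\in T(g)$ coarsely fixes the entire axis of $g$, hence lies in $c\,T(a)c^{-1}$ and $c'\,T(b)c'^{-1}$ for conjugators $c,c'$ that are \emph{forced} by the word (prefixes such as $a^{n_1}$, $a^{n_1}b^{n_2}$, and all their $\langle g\rangle$-translates); they are not free parameters. The sentence ``arranging from the outset that the finitely many conjugates \dots have pairwise trivial finite subgroups --- possible precisely because $\bigcap_{x\in G}xFx^{-1}=\{1\}$'' is exactly the assertion that needs proof: triviality of the normal core of a finite subgroup $F$ says the intersection over \emph{all} conjugates is trivial, and gives no control over the intersection of the two (or countably many) \emph{specific} conjugates produced by your construction. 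Bridging that gap --- choosing $a,b$ and the exponents so that the resulting $T(g)$ is trivial --- is the technical heart of \cite[Lemma 3.4]{Ols93} (and of the later Minasyan--Osin-type statements $E(g)=\langle g\rangle\times E(G)$ for suitable products), and it is not carried out here. The remaining ingredients (the reduction to $E(g)\cong\mathbb Z$, the staircase quasigeodesic, the local-to-global step placing a coarse stabilizer of a long $a$-block into $E(a)$, and the asymmetric exponent pattern ruling out orientation reversal) are standard and essentially correct modulo constants.
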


The theorem below summarizes several useful properties of Dehn filling in relatively hyperbolic groups which were established in  \cite{Osi07, DGO,Sun, CIOS1,CIOS3}. 

\begin{thm}[{\cite{Osi07,Sun,DGO,CIOS1,CIOS3}}]\label{Thm:DF}
Let $G$ be a group hyperbolic relative to a subgroup $H$. There exists a finite subset $\mathcal F\subseteq H\setminus\{ 1\}$ such that, for any $N\lhd H$ satisfying $N\cap \mathcal F=\emptyset$, the following hold.

\begin{enumerate}
\item[(a)] We have $H\cap \ll N\rr=N$; equivalently, the natural map $H/N\to G/\ll N\rr$ is injective. Henceforth, we think of $H/N$ as a subgroup of $G/\ll N\rr$.
\item[(b)] The group $G/\ll N\rr$ is hyperbolic relative to $H/N$. In particular, if $H/N$ is hyperbolic, then so is $G/\ll N\rr$.
\item[(c)] Every finite order element of $G/\ll N\rr$ is conjugate to an element of the subgroup $H/N$ in $G/\ll N\rr$ or is an image of a finite order element of $G$ under the natural homomorphism $G\to G/\ll N\rr$.
\item[(d)] We have $G/[\ll N\rr, \ll N\rr]\in \WR (N/[N,N], G/\ll N\rr\curvearrowright I)$, where $I$ is the set of left cosets $G/H\ll N\rr$ and the action of $G/\ll N\rr$ on $I$ is by left multiplication; in particular, the stabilizers of elements of $I$ in $G/\ll N\rr$ are conjugates of $H/N$.
\item[(e)] If $H\ne G$ and $G$ is ICC, then $G/\ll N\rr$ is ICC and non-trivial.
\end{enumerate}
\end{thm}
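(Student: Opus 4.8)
The plan is to assemble the five assertions from the group-theoretic Dehn filling machinery developed in \cite{Osi07,DGO,Sun,CIOS1,CIOS3}, taking for $\mathcal F$ the union of the finitely many exceptional subsets of $H\setminus\{1\}$ produced by the results quoted below. Since each of those results only requires $N$ to avoid a finite subset of $H\setminus\{1\}$, this common $\mathcal F$ is still finite and serves all parts at once.

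For (a) and (b) I would invoke the isomorphic Dehn filling theorem: since $G$ is hyperbolic relative to $H$, there is a finite $\mathcal F_1\subseteq H\setminus\{1\}$ such that, for any $N\lhd H$ with $N\cap\mathcal F_1=\emptyset$, one has $H\cap\ll N\rr=N$ and $G/\ll N\rr$ is hyperbolic relative to $H/N$; this is \cite{Osi07} (see also \cite{DGO}). The first statement is literally (a), and the equivalence with injectivity of the natural map $H/N\to G/\ll N\rr$ is immediate, since its kernel is $(H\cap\ll N\rr)/N$. The ``in particular'' in (b) then follows from the standard fact that a group which is hyperbolic relative to a hyperbolic peripheral subgroup is itself hyperbolic.

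For (c) I would use the description of torsion in Dehn fillings from \cite{DGO} (cf.~\cite{Sun}): after enlarging $\mathcal F$ if necessary, every finite subgroup of $G/\ll N\rr$ is conjugate either into $H/N$ or into the image of a finite subgroup of $G$, and applying this to cyclic subgroups gives exactly (c). For (d), the key input is that, for $N$ avoiding a suitable finite set, the normal closure $\ll N\rr$ decomposes as a free product $\Ast_{i\in I}g_iNg_i^{-1}$ of conjugates of $N$ indexed by $I=G/H\ll N\rr$, with $G$ permuting these free factors according to its left-multiplication action on $I$ --- this is part of \cite{DGO}, and the resulting identification with a wreath-like product is carried out in \cite{CIOS1}. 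Abelianizing the free product yields $\ll N\rr/[\ll N\rr,\ll N\rr]\cong\bigoplus_{i\in I}N/[N,N]$; since an inner automorphism acts trivially on an abelianization, the conjugation action of $G$ on this direct sum factors through $G/\ll N\rr$, permutes the summands $A_i$ by the rule $gA_ig^{-1}=A_{\e(g)i}$, and hence exhibits $G/[\ll N\rr,\ll N\rr]$ as a member of $\WR(N/[N,N],G/\ll N\rr\curvearrowright I)$. The stabilizer of the base coset $eH\ll N\rr\in I$ equals $H\ll N\rr/\ll N\rr\cong H/(H\cap\ll N\rr)=H/N$ by (a), so each point stabilizer in $G/\ll N\rr$ is a conjugate of $H/N$.

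Finally, for (e) I would cite the fact that sufficiently deep Dehn fillings of an ICC group hyperbolic relative to a proper subgroup remain ICC and non-trivial, established in \cite{CIOS1,CIOS3} (building on \cite{Sun}); this produces the last enlargement of $\mathcal F$. The step I expect to require the most care is not any single argument but the bookkeeping: one must check that a single finite $\mathcal F$ --- the union of all the exceptional sets above --- simultaneously makes all five conclusions valid, and that the identification $H/N<G/\ll N\rr$ fixed in (a) is precisely the one used implicitly in (b)--(e). The only step carrying substantial content, namely (d), is already worked out in \cite{CIOS1}, so the remainder is essentially a matter of quotation and compatibility checks.
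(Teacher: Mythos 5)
Your proposal is correct and follows essentially the same route as the paper, which likewise proves this theorem purely by attribution: (a) and (b) to \cite{Osi07} (the paper also cites \cite{GM}), (c) as a simplification of \cite[Theorem 7.19(f)]{DGO}, (d) to \cite{Sun} as reformulated in \cite[Theorem 2.7]{CIOS1}, and (e) to \cite[Proposition 3.29]{CIOS3}, with $\mathcal F$ taken as the union of the finitely many exceptional sets. Your added sketch of (d) via the Cohen--Lyndon free-product decomposition of $\ll N\rr$ and the stabilizer computation is consistent with how those references proceed.
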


Parts (a) and (b) of Theorem \ref{Thm:DF} were proved in \cite{Osi07} (see also \cite{GM}). Part (c) is a simplification of \cite[Theorem 7.19 (f)]{DGO}. Part (d) was stated and proved in \cite{Sun} using the language of induced modules; it was reformulated in terms of wreath-like products in \cite[Theorem 2.7]{CIOS1}. Finally, (e) is a particular case of \cite[Proposition 3.29]{CIOS3}.

The next result is well-known (see, for example, \cite[Theorem 34, Chapter 8]{GH}).
\begin{lem}\label{Lem:VCC}
    In every hyperbolic group, the centralizer of any infinite order element is elementary.
\end{lem}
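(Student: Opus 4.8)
The plan is to reduce the statement to the known structure of maximal elementary subgroups of hyperbolic groups. Fix a hyperbolic group $G$ and an element $g\in G$ of infinite order, and let $E(g)$ be the unique maximal elementary subgroup of $G$ containing $g$, as provided by \cite[Lemma 1.16]{Ols93}. Since every subgroup of a virtually cyclic group is itself virtually cyclic, it suffices to prove the inclusion $\mathrm{C}_G(g)\subseteq E(g)$.

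To establish this inclusion, I would invoke the standard description of $E(g)$ as
$$E(g)=\{\, h\in G : h g^n h^{-1}=g^{\pm n}\ \text{for some}\ n\in\mathbb{N}\,\},$$
which is part of the circle of results surrounding \cite[Lemma 1.16]{Ols93}: it encodes the fact that $g$ acts on $\partial G$ with exactly two fixed points and that $E(g)$ is the setwise stabilizer of this pair of boundary points. Given this, any $h\in \mathrm{C}_G(g)$ satisfies $hgh^{-1}=g=g^{+1}$, so $h$ lies in the displayed set with $n=1$, whence $h\in E(g)$. This yields $\mathrm{C}_G(g)\subseteq E(g)$ and completes the proof.

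If one prefers not to quote the explicit form of $E(g)$, the same conclusion can be reached purely dynamically: an element $h$ commuting with $g$ carries the fixed point set of $g$ on $\partial G$ onto the fixed point set of $hgh^{-1}=g$, hence stabilizes this two-element set; the subgroup of $G$ stabilizing it setwise is elementary (see \cite[Chapter 8]{GH}) and contains $g$, so by maximality it is contained in $E(g)$, and in particular so is $h$. Either way, there is no genuine obstacle here — this is a classical fact — and the only point requiring care is the identification of $\mathrm{C}_G(g)$ as a subgroup of $E(g)$, for which one needs exactly the structure of $E(g)$ (equivalently, the boundary dynamics of $g$) already recorded above.
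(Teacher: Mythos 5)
Your argument is correct. The paper itself offers no proof of this lemma---it simply records it as well known and cites \cite[Theorem 34, Chapter 8]{GH}---so any honest derivation is ``different'' from the paper by default. What you do is derive the statement from the Olshanskii machinery that the paper has already set up: the inclusion $\mathrm{C}_G(g)\subseteq E(g)$ follows immediately from the standard description $E(g)=\{h\in G : hg^nh^{-1}=g^{\pm n}\ \text{for some}\ n\in\mathbb N\}$ (take $n=1$), and elementarity of $\mathrm{C}_G(g)$ then follows since subgroups of virtually cyclic groups are virtually cyclic. This is a clean reduction, and your alternative boundary-dynamics argument is the content behind the cited theorem in \cite{GH} anyway. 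The only point worth flagging is that the explicit description of $E(g)$ is not literally the statement of \cite[Lemma 1.16]{Ols93} as quoted in the paper (which only asserts existence and uniqueness of the maximal elementary subgroup containing $g$); if you wanted a fully self-contained write-up you would either cite the precise place where that description is established or run the dynamical argument you sketch, but for a fact at this level of standardness the paper's bare citation and your one-line reduction are equally acceptable.
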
 

In general, centralizers of (non-trivial) finite order elements in hyperbolic groups need not be elementary. We introduce the following.

\begin{defn}
    We say that a group $G$ has \textit{virtually cyclic centralizers} (or is a \textit{VCC group}) if $C_G(g)$ is virtually cyclic for all $g\in G\setminus\{ 1\}$.
\end{defn}

Clearly, if a group $G$ is VCC and not virtually cyclic, then $G$ is ICC. Further, from Lemma \ref{Lem:VCC}, we immediately obtain the following.

\begin{cor}\label{Cor:VCC}
    A hyperbolic group $G$ is VCC if and only if the centralizer of any non-trivial, finite order element of $G$ is virtually cyclic. In particular, every torsion-free hyperbolic group is VCC.
\end{cor}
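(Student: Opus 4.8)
The plan is to read the corollary off Lemma~\ref{Lem:VCC} together with the elementary observation that in any group every non-trivial element has either finite or infinite order. The forward implication needs no argument: if $G$ is VCC, then by definition $C_G(g)$ is virtually cyclic for \emph{every} $g\in G\setminus\{1\}$, hence in particular for every non-trivial element of finite order.

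For the converse, I would assume that $C_G(g)$ is virtually cyclic for every non-trivial finite order element $g\in G$, and then take an arbitrary $g\in G\setminus\{1\}$ and split into two cases. If $g$ has finite order, $C_G(g)$ is virtually cyclic by hypothesis. If $g$ has infinite order, Lemma~\ref{Lem:VCC} gives that $C_G(g)$ is elementary, which by the convention fixed earlier in the section means virtually cyclic. Since these two cases exhaust $G\setminus\{1\}$, the group $G$ is VCC. For the ``in particular'' clause, I would simply note that a torsion-free group has no non-trivial finite order elements, so the hypothesis of the equivalence just proved holds vacuously, whence $G$ is VCC.

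There is no real obstacle here; the only thing to be careful about is to invoke Lemma~\ref{Lem:VCC} for the infinite order case rather than trying to treat all elements uniformly. As the surrounding text already warns, centralizers of non-trivial finite order elements of a hyperbolic group need not be elementary, so the finite order case genuinely cannot be absorbed into hyperbolic group theory and must remain a hypothesis (or be guaranteed, as in the torsion-free situation).
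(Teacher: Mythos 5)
Your proof is correct and is exactly the argument the paper intends: the paper gives no explicit proof, stating only that the corollary follows immediately from Lemma~\ref{Lem:VCC}, and your case split on element order (hypothesis for finite order, Lemma~\ref{Lem:VCC} for infinite order) together with the vacuous torsion-free case is precisely that immediate deduction.
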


The next lemma shows that the VCC property is inherited by certain quotients of VCC hyperbolic groups.

\begin{lem}\label{Lem:cent}
Let $G$ be a non-elementary, hyperbolic, VCC group. For any infinite order element $g\in G$ such that $\langle g\rangle \lhd E(g)$ and any sufficiently large $n\in \mathbb N$, the group $G/\ll g^n\rr$ is also non-elementary, hyperbolic, and VCC. 
\end{lem}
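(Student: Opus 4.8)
The plan is to apply the Dehn filling machinery of Theorem \ref{Thm:DF} to the relatively hyperbolic pair $(G, E(g))$ and then check the VCC property on the quotient. First, by Lemma \ref{Thm:Eg}, $G$ is hyperbolic relative to $H = E(g)$; note $H$ is virtually cyclic, hence in particular slender, so it has a finite subset $\mathcal F$ as in Theorem \ref{Thm:DF}. Since $\langle g\rangle \lhd E(g)$, for all sufficiently large $n$ the subgroup $N = \langle g^n\rangle$ is normal in $H$ and avoids $\mathcal F$ (any fixed nontrivial power of $g$ eventually leaves every finite set, and $H/\langle g\rangle$ is finite so the conjugates of $g^n$ are controlled). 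Applying parts (a), (b), (e) of Theorem \ref{Thm:DF}: $H/N$ is virtually cyclic (a quotient of $E(g)$ by $\langle g^n\rangle$, still containing the finite-index image of $\langle g\rangle$), hence hyperbolic, so $\bar G := G/\ll N\rr = G/\ll g^n\rr$ is hyperbolic; and since $H = E(g) \ne G$ (as $G$ is non-elementary) and $G$ is ICC (a non-elementary hyperbolic VCC group is ICC — VCC groups that are not virtually cyclic are ICC), part (e) gives that $\bar G$ is ICC and nontrivial, in particular non-elementary.

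It remains to verify that $\bar G$ is VCC, i.e. $C_{\bar G}(\bar x)$ is virtually cyclic for every nontrivial $\bar x \in \bar G$. By Corollary \ref{Cor:VCC} applied to the hyperbolic group $\bar G$, it suffices to handle $\bar x$ of finite order. Here I would use part (c) of Theorem \ref{Thm:DF}: every finite-order element of $\bar G$ is either conjugate into the virtually cyclic subgroup $H/N$, or is the image of a finite-order element of $G$ under the natural map $\pi\colon G \to \bar G$. In the first case, $\bar x$ lies in a virtually cyclic subgroup, so its centralizer in $\bar G$ is virtually cyclic by Lemma \ref{Lem:VCC} (if $\bar x$ has infinite order inside that subgroup — but here $\bar x$ is torsion) — more carefully: a torsion element of a virtually cyclic group generates a finite normal-up-to-finite-index subgroup, and I need that $C_{\bar G}(\bar x)$ is still virtually cyclic; since $\bar G$ is hyperbolic this reduces, via Corollary \ref{Cor:VCC}, to showing the centralizer is not "too big". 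The cleaner route is the second case as the template: if $\bar x = \pi(x)$ with $x \in G$ of finite order, I want to relate $C_{\bar G}(\bar x)$ to $C_G(x)$, which is virtually cyclic by hypothesis.

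The main obstacle — and the heart of the argument — is this last comparison: controlling centralizers of torsion elements upon passing to the quotient, since $\pi$ need not be injective on centralizers and new commuting relations can appear. I expect the resolution to invoke the quantitative/Greendlinger-type control available for sufficiently deep Dehn fillings (as developed in \cite{Osi07,DGO,CIOS1,CIOS3}): for $n$ large enough, any element $\bar y \in C_{\bar G}(\bar x)$ lifts to $y \in G$ with $[x,y] \in \ll N\rr$, and a small-cancellation argument over the filling shows that in fact $[x,y]$ can be taken trivial (or conjugate into $N$, which is virtually cyclic), forcing $C_{\bar G}(\bar x)$ to be a quotient/extension of something virtually cyclic and hence virtually cyclic. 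I would structure this by first proving the statement for $\bar x$ whose lift has finite order (using that finite subgroups inject and their centralizers are controlled by the filling theorem's injectivity radius estimates), then deduce the general finite-order case via Theorem \ref{Thm:DF}(c); combined with Corollary \ref{Cor:VCC} this yields that $\bar G$ is VCC, completing the proof.
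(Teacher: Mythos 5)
Your overall architecture is reasonable, and the parts you do carry out (hyperbolicity and non-elementarity of $\bar G$ via Lemma \ref{Thm:Eg} and Theorem \ref{Thm:DF}, ICC via Theorem \ref{Thm:DF}(e), and the reduction to finite-order elements via Corollary \ref{Cor:VCC}) match the paper's proof. But the step you yourself flag as ``the heart of the argument'' --- showing that $C_{\bar G}(\bar x)$ is virtually cyclic for every nontrivial torsion element $\bar x\in\bar G$ --- is left as a hope rather than a proof, and this is a genuine gap. Theorem \ref{Thm:DF} as stated contains no control whatsoever on centralizers in the quotient, and your proposed route (lift a commuting pair, argue that $[x,y]\in\ll N\rr$ forces $[x,y]=1$ by ``Greendlinger-type control'') is not something you can extract from the cited filling results; indeed new commuting relations genuinely can appear, e.g.\ the image of $g$ becomes a torsion element of order dividing $n$ and its centralizer must be analyzed from scratch. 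Your first-case reasoning (``$\bar x$ lies in a virtually cyclic subgroup, so its centralizer is virtually cyclic'') is, as you half-acknowledge, a non sequitur: containment in a virtually cyclic subgroup says nothing about the centralizer in the ambient group.

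The paper closes this gap with a different and essentially unavoidable input: Olshanskii's results on the quotients $G/\ll g^n\rr$ (specifically Theorem 4 and Proposition 1 of \cite{Ols93}, applicable here because $G$ is VCC), which yield the dichotomy that the centralizer of any finite-order element of $\bar G$ is either virtually cyclic or all of $\bar G$. The second alternative is then excluded by the ICC property of $\bar G$ (a nontrivial central element would have a finite conjugacy class). So the missing idea in your proposal is precisely this centralizer dichotomy, which comes from Olshanskii's graded small cancellation analysis of these particular quotients and not from the general Dehn filling theorem; without citing or reproving it, your argument does not go through.
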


\begin{proof}
Let $g\in G$ be an infinite order element. In \cite{Ols93}, Olshanskii proved that, for any sufficiently large natural number $n$, the quotient group $\overline G=G/\ll g^n\rr$ satisfies the following.

\begin{enumerate}
\item[(a)] $\overline G$ is hyperbolic and not virtually cyclic;

\item[(b)]  the centralizer of every finite order element of $\overline G$ is virtually cyclic or equals $\overline G$.
\end{enumerate}

\noindent Indeed, (a) is guaranteed by part 7) of \cite[Theorem 3]{Ols93} and (b) follows immediately from Theorem 4 and Proposition 1 in \cite{Ols93} (see the remark after Theorem 4 there; in particular, this remark and the assumption that $G$ satisfies the VCC condition allow us to apply \cite[Theorem 4]{Ols93} in this case.)

Furthermore, by Lemma \ref{Thm:Eg}, $G$ is hyperbolic relative to $E(g)$. Applying Theorem \ref{Thm:DF} to the ICC group $G$, the subgroup $H=E(g)$, and the normal subgroup $N=\langle g^n\rangle$ of $H$, we obtain that

\begin{enumerate}
    \item[(c)] $\overline G$ is ICC 
\end{enumerate} 
for all sufficiently large $n\in \mathbb N$.  

Fix any $n$ such that conditions (a)--(c) hold. By Corollary \ref{Cor:VCC} we only need to consider centralizers of non-trivial finite order elements in $\overline G$. Suppose that $C_{\overline G}(x)$ is not virtually cyclic for some finite order element $x\in \overline{G}\setminus \{ 1\}$. Then $C_{\overline G} (x)=\overline G$ by (b), which contradicts (c). 
\end{proof}

\begin{cor}\label{Cor:VCCquot}
Let $G$ be a non-cyclic, torsion-free, hyperbolic group. For any 
$g\in G\setminus\{1\}$ and any sufficiently large $n\in \mathbb N$, the group $G/\ll g^n\rr$ is non-elementary, hyperbolic, and VCC.
\end{cor}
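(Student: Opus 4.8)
The plan is to derive this corollary directly from Lemma \ref{Lem:cent} together with Corollary \ref{Cor:VCC}, after verifying that the hypotheses of Lemma \ref{Lem:cent} are met. First I would observe that a non-cyclic, torsion-free, hyperbolic group $G$ is non-elementary: indeed, every elementary (i.e.\ virtually cyclic) torsion-free group is cyclic, as recorded in the discussion preceding Lemma \ref{Thm:Eg} via Schur's theorem (or Stallings' theorem), so a non-cyclic torsion-free hyperbolic group cannot be virtually cyclic, hence is non-elementary. Next, by Corollary \ref{Cor:VCC}, every torsion-free hyperbolic group is VCC, so $G$ is a non-elementary, hyperbolic, VCC group.

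The remaining point is to check that for $g\in G\setminus\{1\}$ the element $g$ has infinite order and satisfies $\langle g\rangle\lhd E(g)$, so that Lemma \ref{Lem:cent} applies. Since $G$ is torsion-free, every non-trivial element $g$ has infinite order, and $E(g)$ is the unique maximal elementary subgroup containing $g$ (Lemma 1.16 of \cite{Ols93}, quoted in the excerpt). As explained before Lemma \ref{Thm:Eg}, $E(g)$ is torsion-free virtually cyclic, hence cyclic; write $E(g)=\langle h\rangle$. Then $\langle g\rangle$ is a subgroup of the abelian group $\langle h\rangle$, hence normal in $E(g)$, so the hypothesis $\langle g\rangle\lhd E(g)$ of Lemma \ref{Lem:cent} holds automatically.

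With these verifications in place, Lemma \ref{Lem:cent} yields that for all sufficiently large $n\in\mathbb N$ the quotient $G/\ll g^n\rr$ is non-elementary, hyperbolic, and VCC, which is exactly the assertion of the corollary. I do not anticipate a genuine obstacle here: the entire content is packaging the torsion-free hypothesis so as to feed into Lemma \ref{Lem:cent}, and the only mild care needed is the elementary observation that torsion-free virtually cyclic groups are cyclic (already invoked in the excerpt) in order to see both that $G$ itself is non-elementary and that $\langle g\rangle$ is normal in $E(g)$.
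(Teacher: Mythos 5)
Your proof is correct and follows essentially the same route as the paper: deduce that $G$ is VCC from torsion-freeness via Corollary \ref{Cor:VCC}, note that $E(g)$ is cyclic so $\langle g\rangle\lhd E(g)$, and apply Lemma \ref{Lem:cent}. Your write-up is slightly more detailed (it also explicitly checks that $G$ is non-elementary), but the argument is the same.
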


\begin{proof}
Since $G$ is torsion-free, it is a VCC group. Further, for any non-trivial $g\in G$, $E(g)$ is cyclic as we explained above. In particular, $\langle g\rangle \lhd E(g)$ and Lemma \ref{Lem:cent} applies.
\end{proof}

A sequence of subgroups $\{N_i\}_{i\in \NN}$, of a group $H$ is said to be \emph{cofinal} if, for every finite set $\mathcal F\subseteq H\setminus \{ 1\}$, we have $N_i\cap \mathcal F=\emptyset$ for all but finitely many $i\in \NN$. In particular, if $G$ is hyperbolic relative to $H$ and $\{N_i\}_{i\in \NN}$ is a cofinal sequence of normal subgroups of $H$, then properties (a)--(e) from Theorem \ref{Thm:DF} hold for all but finitely many subgroups $N=N_i$.

The next result is a simplification of \cite[Theorem 3]{DG}. Note that this theorem is proved in \cite{DG} under the assumption that $G$ is \emph{rigid}, i.e., it  does admit non-trivial splittings as an amalgamated free product or an HNN-extension that satisfy certain additional conditions. This assumption is automatically satisfied whenever $G$ has property (T) group since property (T) groups do not admit any non-trivial splittings at all.

\begin{thm}[{\cite{DG}}]\label{Thm:DG}
Let $G$ be a property (T) group hyperbolic relative to a proper subgroup $H$ and let $\{ N_i\} _{i\in \NN}$ be a cofinal set of normal subgroups of $H$. Suppose that, for each $i\in \NN$, there exists an automorphism $\alpha_i\in {\rm Aut}(G/\ll N_i\rr)$ stabilizing (setwise) the set of conjugates of the subgroup $H\ll N_i\rr/\ll N_i\rr$ in $G/\ll N_i\rr$. Then there exists $\alpha\in {\rm Aut} (G)$, an infinite subset $I\subseteq \NN$, and inner automorphisms $\iota _i\in {\rm Inn} (G/\ll N_i\rr)$ such that the diagram
\begin{equation}\label{Eq:cdalpha}
\begin{tikzcd}
G\ar[r, "\alpha"]\ar[d] & G\ar[d]\\
G/\ll N_i\rr \ar[r, "\iota_i\circ \alpha_i"] & G/\ll N_i\rr\\
\end{tikzcd}
\end{equation}
is commutative for all $i\in I$; the vertical arrows are natural homomorphisms.
\end{thm}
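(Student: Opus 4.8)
The plan is to derive Theorem~\ref{Thm:DG} from \cite[Theorem~3]{DG}. That theorem has exactly one hypothesis beyond what is assumed here, namely that $G$ be \emph{rigid} in the sense of \cite{DG}: $G$ admits no nontrivial splitting as an amalgamated free product $A\ast_C B$ or as an HNN extension over a subgroup of the relevant (peripheral/elementary) type. So the only thing to verify is that a property (T) group is rigid, and here I would use that Kazhdan's property (T) implies Serre's property (FA) (Watatani): every isometric action of $G$ on a simplicial tree fixes a point. If $G$ split nontrivially, it would act without a global fixed point on the associated Bass--Serre tree, contradicting (FA); alternatively, a finitely generated property (T) group has finite abelianization, so it cannot surject onto $\mathbb Z$ and therefore admits no nontrivial HNN splitting, while the amalgam case is excluded by (FA) directly. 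Hence $G$ is rigid, and \cite[Theorem~3]{DG} applies, producing $\alpha\in{\rm Aut}(G)$, an infinite $I\subseteq\NN$, and inner automorphisms $\iota_i\in{\rm Inn}(G/\ll N_i\rr)$ making \eqref{Eq:cdalpha} commute.

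To indicate why the hypotheses are what they are, here is the shape of the argument inside \cite{DG}. Property (T) makes $G$ finitely presented; fix a finite presentation with generating set $S$. Cofinality of $\{N_i\}$, together with the relatively hyperbolic Dehn filling package (Theorem~\ref{Thm:DF} and its standard refinement that the quotient maps $\pi_i\colon G\to\overline G_i:=G/\ll N_i\rr$ are injective on balls of radius $R_i$ with $R_i\to\infty$), gives that each $\overline G_i$ is hyperbolic relative to $H\ll N_i\rr/\ll N_i\rr$ with peripheral structure the conjugates of that subgroup. One lifts each $\alpha_i$ on generators, studies the resulting twisted $G$-actions on the Cayley graphs of the $\overline G_i$, and normalizes $\alpha_i$ by the inner automorphism $\iota_i$ that minimizes the maximal displacement $\lambda_i$ of a basepoint. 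If the $\lambda_i$ stay bounded along an infinite set of indices, then ball-injectivity of the $\pi_i$ and finiteness of the presentation let one pass to a further infinite subset on which the lifted generator tuples are eventually a single tuple in $G$; this tuple defines $\alpha\colon G\to G$ with $\pi_i\alpha=\iota_i\alpha_i\pi_i$, and the same argument applied to $\alpha_i^{-1}$ shows $\alpha\in{\rm Aut}(G)$. If instead $\lambda_i\to\infty$, rescaling the metrics by $1/\lambda_i$ and passing to an ultralimit produces a nontrivial isometric action of $G$ on an $\mathbb R$-tree with virtually cyclic (peripheral) arc stabilizers; by Rips theory this yields a nontrivial splitting of $G$ over such a subgroup, contradicting rigidity --- equivalently, property (T) groups have property (F$\mathbb R$), so the limiting action would be trivial.

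In the reduction I am proposing there is essentially no obstacle: the rigidity verification is immediate. The genuine difficulty lies inside \cite{DG}, and it is precisely the dichotomy above --- simultaneously controlling the inner correctors $\iota_i$ so that, modulo them, the $\alpha_i$ have bounded displacement. Relative hyperbolicity is what forces the rescaled limit to be an $\mathbb R$-tree with small arc stabilizers rather than some wilder asymptotic object, and property (T) --- not just (FA) --- is what rules out fixed-point-free $\mathbb R$-tree actions and thereby closes the bounded/unbounded alternative. Once boundedness is in hand, extracting $\alpha$ and the infinite index set $I$ is a routine compactness-and-pigeonhole step, and in the final write-up we will simply invoke \cite[Theorem~3]{DG} after recording that property (T) implies rigidity.
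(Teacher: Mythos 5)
Your proposal matches the paper's treatment exactly: the theorem is obtained by invoking \cite[Theorem 3]{DG} after observing that the rigidity hypothesis there is automatic for property (T) groups, since such groups admit no nontrivial splittings (property (FA)). The additional sketch of the internal argument of \cite{DG} is accurate but not needed; the paper itself records only the reduction.
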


\noindent With these preliminaries at hand we are now ready to prove our main group theoretic result.

\begin{thm}\label{Thm:Group}
Let $G$ be a non-trivial, VCC, hyperbolic group with property (T) such that ${\rm Out}(G)=\{ 1\}$. There exists an infinite order element $g\in G$ such that for all sufficiently large $n\in \NN$, the following hold.
\begin{enumerate}
\item[(a)] $G/[\ll g^{n}\rr, \ll g^{n}\rr]\in \WR(\ZZ, B\curvearrowright I)$, where $B=G/\ll g^{n}\rr$, the action of $B$ on $I$ is transitive, and stabilizers of elements of $I$ are isomorphic to $\mathbb Z/n\mathbb Z$.
\item[(b)] $B$ is non-elementary, hyperbolic, and VCC (in particular, ICC). 
\item[(c)] Every injective homomorphism $B\to B$ is an inner automorphism.
\end{enumerate}
\end{thm}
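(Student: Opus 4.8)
The plan is to find a single infinite order element $g\in G$ whose maximal elementary subgroup is cyclic, and then feed the cofinal sequence of Dehn fillings $\ll g^n\rr$ into the machinery assembled above. First I would invoke Lemma \ref{Lem:Ols}: since $G$ is non-trivial, ICC (being VCC and non-elementary) and hyperbolic, there is an infinite order element $g\in G$ with $E(g)=\langle g\rangle$; in particular $\langle g\rangle\lhd E(g)$, so Lemma \ref{Lem:cent} applies to every sufficiently large $n$. Set $\overline G = G/\ll g^n\rr$ and $B=\overline G$; by Lemma \ref{Thm:Eg}, $G$ is hyperbolic relative to $H=E(g)=\langle g\rangle$, which is proper since $G$ is non-elementary. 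For all sufficiently large $n$, the subgroup $N=\langle g^n\rangle\lhd H$ avoids the exceptional finite set $\mathcal F$ of Theorem \ref{Thm:DF}, so parts (a)--(e) of that theorem hold.

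For item (a), apply Theorem \ref{Thm:DF}(d) with this $H$ and $N$: we get $G/[\ll g^n\rr,\ll g^n\rr]\in\WR(N/[N,N],\,\overline G\curvearrowright I)$ where $I=G/H\ll N\rr$ carries the left multiplication action. Since $N=\langle g^n\rangle$ is infinite cyclic, $N/[N,N]=N\cong\ZZ$. The action on $I$ is transitive (it is a coset space of $\overline G$), and by the last clause of Theorem \ref{Thm:DF}(d) the point stabilizers are conjugates of $H/N = \langle g\rangle/\langle g^n\rangle\cong\mathbb Z/n\mathbb Z$; conjugate subgroups are isomorphic, giving the claimed stabilizer. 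For item (b), Lemma \ref{Lem:cent} (whose hypotheses we arranged above, using that $G$ is non-elementary, hyperbolic, VCC) gives directly that $B=G/\ll g^n\rr$ is non-elementary, hyperbolic and VCC for all sufficiently large $n$; and a non-elementary VCC group is ICC as noted after Definition of VCC. Intersecting the finitely many "sufficiently large $n$" conditions from Lemma \ref{Lem:cent} and from Theorem \ref{Thm:DF} leaves a cofinal (hence still infinite, containing all large enough $n$) set of admissible $n$.

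Item (c) is the substantive point and where the argument from Theorem \ref{Thm:DG} enters. Suppose for contradiction that for infinitely many admissible $n\in\NN$ there is an injective homomorphism $\beta_n\colon B_n\to B_n$, where $B_n=G/\ll g^n\rr$, that is \emph{not} an inner automorphism. I first need to upgrade each $\beta_n$ to an automorphism: since $B_n$ is a non-elementary hyperbolic group, it is Hopfian (hyperbolic groups are Hopfian, and more directly an injective endomorphism of a finitely generated Hopfian group is surjective), so each $\beta_n=\alpha_n$ is an automorphism of $B_n=G/\ll N_n\rr$ with $N_n=\langle g^n\rangle$. Next I claim $\alpha_n$ stabilizes setwise the conjugacy class of $H\ll N_n\rr/\ll N_n\rr$; this is the part requiring care. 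The subgroup $P_n := H\ll N_n\rr/\ll N_n\rr$ is, up to conjugacy, the stabilizer of a point of $I$, hence by Theorem \ref{Thm:DF}(d) it is isomorphic to $\mathbb Z/n\mathbb Z$, and one checks these are exactly the maximal finite subgroups of $B_n$ arising this way — combining Theorem \ref{Thm:DF}(c), which says every finite order element of $B_n$ is either conjugate into $P_n$ or the image of a torsion element of $G$, with the fact that $G$ is torsion-free...

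Wait — $G$ is only assumed VCC hyperbolic, not torsion-free, so I cannot use torsion-freeness of $G$ directly. Let me instead argue: for $n$ a large prime (the admissible $n$ can be taken to range over primes, or I can just pick one suitable large $n$ and not worry about infinitely many, adjusting the logic), the subgroups conjugate to $P_n\cong\ZZ/n\ZZ$ are distinguished among finite subgroups of $B_n$ by their order once $n$ exceeds the (finitely many) orders of torsion elements of $G$ and the torsion of $E(g)$-images; so $\alpha_n$, being an automorphism, permutes the subgroups of order $n$, and since all of them are conjugate into $\{$conjugates of $P_n\}$ by the order count, $\alpha_n$ stabilizes this conjugacy class. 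With the hypothesis of Theorem \ref{Thm:DG} verified (using that $G$ has property (T), hence is rigid, and is hyperbolic relative to the proper subgroup $H$, with $\{N_i\}$ cofinal), we conclude there exist $\alpha\in\mathrm{Aut}(G)$, an infinite $I\subseteq\NN$ and inner $\iota_i\in\mathrm{Inn}(B_i)$ making diagram \eqref{Eq:cdalpha} commute for $i\in I$. Since $\mathrm{Out}(G)=\{1\}$, $\alpha$ is inner, say $\alpha=\mathrm{conj}_h$; then the commutativity of \eqref{Eq:cdalpha} forces $\iota_i\circ\alpha_i = \mathrm{conj}_{\bar h}$ on the image of $G$, which is all of $B_i$, so $\alpha_i = \iota_i^{-1}\circ\mathrm{conj}_{\bar h}$ is inner, contradicting our choice of $\alpha_n$ as non-inner for all these $n$. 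Hence for all sufficiently large admissible $n$, every injective endomorphism of $B_n$ is inner, proving (c). The main obstacle, as flagged, is the bookkeeping in verifying that an arbitrary automorphism of $B_n$ must preserve the conjugacy class of the peripheral-image subgroup $P_n$ — i.e., giving an intrinsic characterization of that conjugacy class inside $B_n$ robust to all of $\mathrm{Aut}(B_n)$ — together with keeping track that the three "sufficiently large $n$" constraints can be met simultaneously along an infinite set.
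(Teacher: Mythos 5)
Your treatment of (a), (b), and of the triviality of $\mathrm{Out}(B)$ follows the same route as the paper: Lemma \ref{Lem:Ols} to get $E(g)=\langle g\rangle$, Lemma \ref{Thm:Eg} and Theorem \ref{Thm:DF} for the wreath-like structure and the $\mathbb Z/n\mathbb Z$ stabilizers, Lemma \ref{Lem:cent} for (b), and the contradiction argument via Theorem \ref{Thm:DG} plus $\mathrm{Out}(G)=\{1\}$. Your verification that $\alpha_n$ preserves the conjugacy class of $P_n$ also lands, after some hesitation, on the paper's argument: by the uniform bound on orders of torsion elements in a hyperbolic group together with Theorem \ref{Thm:DF}(a),(c), for $n$ larger than that bound the conjugates of $\langle \e_n(g)\rangle$ are the only cyclic subgroups of order $n$ in $B_n$. (Your aside about restricting to prime $n$, or ``just picking one suitable $n$,'' would not give the required ``for all sufficiently large $n$'' conclusion, but the torsion-bound argument you also give does, so this is only a distraction.)

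There is, however, a genuine gap in your step upgrading an injective endomorphism $\beta_n\colon B_n\to B_n$ to an automorphism. You invoke Hopficity and assert that ``an injective endomorphism of a finitely generated Hopfian group is surjective.'' This is false: Hopfian means surjective endomorphisms are injective, which says nothing about injective ones; $\mathbb Z$ and nonabelian free groups are Hopfian, yet $n\mapsto 2n$ and proper embeddings of free groups are injective and non-surjective. What you actually need is that $B_n$ is \emph{co-Hopfian}, and this is a substantive theorem, not a formal consequence of Hopficity: the paper cites Moioli's result (generalizing Sela) that every $1$-ended hyperbolic group is co-Hopfian, applicable here because $B_n$ has property (T) and hence does not split, so it is $1$-ended; alternatively, co-Hopficity of property (T) hyperbolic groups follows from \cite[Theorem 4.4]{CG}. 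Without one of these inputs, your argument does not rule out a non-surjective injective endomorphism of $B_n$, to which neither Theorem \ref{Thm:DG} nor the triviality of $\mathrm{Out}(B_n)$ applies.
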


\begin{proof}
By Lemma \ref{Lem:Ols}, there exists an infinite order element $g\in G$ such that $E(g)=\langle g\rangle$. By Theorem \ref{Thm:Eg}, $G$ is hyperbolic relative to $E(g)$. For every $n\in \NN$, let $N_n=\langle g^n\rangle \leqslant E(g)$. We first show that $Out(G/\ll N_n\rr)=\{ 1\}$ for all but finitely many $n\in \NN$.

We argue by contradiction. Assume that there exists an infinite subset $I_1\subseteq \NN$ such that, for every $i\in I_1$, there is a non-inner automorphism $\alpha_i\in {\rm Aut}(G/\ll N_i\rr)$. Clearly, $\{ N_i\}_{i\in \NN}$ is a cofinal sequence of normal subgroups of $E(g)$. Therefore, there exists a cofinite subset $I_2\subseteq \NN$ such that the conclusion of Theorem \ref{Thm:DF} holds for $H=E(g)$ and $N=N_i$ whenever $i\in I_2$. In particular, we have $$E(g)\ll N_i\rr/\ll N_i\rr\cong E(g)/N_i=\langle g\rangle /\langle g^i\rangle \cong \mathbb Z/i\mathbb Z
$$ for all $i\in I_2$.

Recall that finite orders of elements of every hyperbolic group are uniformly bounded \cite{BG}. Thus, there exists  $N\in \NN$ such that for every finite order element $f\in G$, we have $|f|\le N$. Let
$$
J=\{ j\in I_1\cap I_2\mid j> N\}.
$$
Clearly, $J$ is infinite in $\NN$ and, in particular, infinite. Henceforth, the index $j$ will be assumed to range in $J$ by default.

Let $\e_j\colon G\to G/\ll N_j\rr$ denote the natural homomorphism. By parts (a) and (c) of Theorem~\ref{Thm:DF}, the only cyclic subgroups of order $j$ in $G/\ll N_j\rr$ are conjugates of $\langle \e_j(g)\rangle $. Hence, each $\alpha_j$ maps the subgroup $\langle \e_j(g)\rangle $ to its conjugate in $G/\ll N_j\rr$. By Theorem \ref{Thm:DG}, there exists $\alpha\in {\rm Aut} (G)$ and inner automorphisms $\iota _j\in {\rm Inn} (G/\ll N_j\rr)$ such that the diagram (\ref{Eq:cdalpha}) is commutative for infinitely many $j\in J$. Since ${\rm Out}(G)=\{ 1\}$, $\alpha $ is an inner automorphism of $G$. It follows that $\alpha _j\in {\rm Inn} (G/\ll N_j\rr)$ for infinitely many $j\in J$, which contradicts the choice of $\alpha_j$. This contradiction shows that ${\rm Out}(G/\ll N_n\rr)=\{ 1\}$ for infinitely many $n\in \NN$.

Note that $N_n/[N_n, N_n]\cong\ZZ$ and $\ll N_n\rr =\ll g^n\rr$ for all $n\in \NN$. Using Theorem \ref{Thm:DF}, we obtain (a) for all sufficiently large $n$. By Lemma \ref{Lem:cent}, we can also ensure (b) by taking $n$ sufficiently large. Thus, we have conditions (a), (b) and ${\rm Out}(G/\ll N_n\rr)=\{ 1\}$ for all sufficiently large $n\in \NN$. Generalizing a result of Sela \cite{Sel}, Moioli \cite{Moi} proved that every $1$-ended hyperbolic group is co-Hopfian, i.e., every injective homomorphism is an automorphism (for groups with property (T), this also follows from \cite[Theorem 4.4]{CG}). Since ${\rm Out}(G/\ll N_n\rr)=\{ 1\}$, we obtain (c).
\end{proof}



\begin{lem}\label{trivialout}
There exists a non-elementary, VCC,  hyperbolic group $B$ with property (T), trivial abelianization, and ${\rm Out}(B)=\{ 1\}$.
\end{lem}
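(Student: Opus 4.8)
The plan is to obtain $B$ as a Dehn filling of a carefully prepared torsion-free hyperbolic group with property (T), adapting the argument in the proof of Theorem~\ref{Thm:Group} but \emph{without} presupposing a trivial outer automorphism group. First, I would fix a seed: a non-elementary, torsion-free, hyperbolic group $\Gamma$ with property (T) — for instance a torsion-free, finite-index subgroup of a cocompact lattice in $\mathrm{Sp}(n,1)$ with $n\ge 2$, which is hyperbolic, Kazhdan, and virtually torsion-free. Since $\Gamma$ has property (T), $\Gamma^{\mathrm{ab}}$ is finite; choosing finitely many elements whose images generate $\Gamma^{\mathrm{ab}}$ and replacing each by a long, generic, non-proper-power word in its coset modulo $[\Gamma,\Gamma]$, I would obtain a small-cancellation family $w_1,\dots,w_r$ and set $K:=\Gamma/\ll w_1,\dots,w_r\rr$. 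By small-cancellation theory over hyperbolic groups, $K$ is again non-elementary, torsion-free and hyperbolic; it has property (T) as a quotient of $\Gamma$; and $K$ is perfect by construction. Being torsion-free hyperbolic, $K$ is VCC (Corollary~\ref{Cor:VCC}) and ICC.

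Next I would kill the outer automorphism group by a Dehn filling. Since $K$ has property (T) it admits no nontrivial splitting, so $\mathrm{Out}(K)$ is finite (a hyperbolic group with infinite outer automorphism group splits over a virtually cyclic subgroup); fix representatives $\alpha_1,\dots,\alpha_{N-1}\in\mathrm{Aut}(K)$ of its nontrivial classes. Using Lemma~\ref{Lem:Ols} I would select an infinite-order $g\in K$ with $E(g)=\langle g\rangle$ \emph{and} with $\alpha_j(g)$ not conjugate to $g^{\pm1}$ for any $j$, and set $B:=K/\ll g^{n}\rr$ for all sufficiently large $n$. Applying Theorem~\ref{Thm:DF} with $H=E(g)=\langle g\rangle$ and $N=\langle g^{n}\rangle$: $B$ is hyperbolic, and by Lemma~\ref{Lem:cent} non-elementary and VCC (hence ICC); the image of $\langle g\rangle$ has order exactly $n$; and, $K$ being torsion-free, every finite subgroup of $B$ is conjugate into $\langle\bar g\rangle$. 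Moreover $B$ has property (T) and, as a quotient of $K$, is perfect, hence has trivial abelianization. It remains to prove $\mathrm{Out}(B)=\{1\}$. As in the proof of Theorem~\ref{Thm:Group}, $\langle\bar g\rangle$ is the unique conjugacy class of cyclic subgroups of order $n$ in $B$, so \emph{every} automorphism of $B$ stabilizes its conjugacy class; hence, if $\mathrm{Out}(B)\neq\{1\}$ for infinitely many $n$, Theorem~\ref{Thm:DG} (applied to that infinite set of fillings) yields a single $\alpha\in\mathrm{Aut}(K)$ with $\alpha(\ll g^{n}\rr)=\ll g^{n}\rr$, i.e.\ $\ll\alpha(g)^{n}\rr=\ll g^{n}\rr$, for infinitely many $n$. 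If $\alpha(g)\not\sim g^{\pm1}$, then $\langle g\rangle$ and $\langle\alpha(g)\rangle$ are non-commensurable maximal cyclic subgroups, so $K$ is hyperbolic relative to the pair $\{\langle g\rangle,\langle\alpha(g)\rangle\}$ (Bowditch; cf.\ Lemma~\ref{Thm:Eg}), and filling only the first factor — the multi-peripheral form of Theorem~\ref{Thm:DF}(a), see \cite{Osi07} — gives $\langle\alpha(g)\rangle\cap\ll g^{n}\rr=\{1\}$; thus $\alpha(g)^{n}\notin\ll g^{n}\rr$, contradicting $\ll\alpha(g)^{n}\rr=\ll g^{n}\rr$. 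Hence $\alpha(g)\sim g^{\pm1}$, which by the choice of $g$ forces $\alpha\in\mathrm{Inn}(K)$; then the automorphisms of $B$ it induces are inner, contradicting $\mathrm{Out}(B)\neq\{1\}$. Therefore $\mathrm{Out}(B)=\{1\}$ for all sufficiently large $n$, and $B$ has all the required properties.

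The main obstacle is the choice of $g$: given a hyperbolic group $K$ and finitely many nontrivial classes in $\mathrm{Out}(K)$, one must produce a conjugacy class $[g]$ with $E(g)=\langle g\rangle$ which no chosen representative $\alpha_j$ sends to $[g]$ or $[g^{-1}]$. This amounts to the rigidity statement that a nontrivial outer automorphism of a hyperbolic property (T) group cannot fix ``generically many'' conjugacy classes of primitive infinite-order elements — in particular, that a pointwise-inner automorphism is inner — together with the persistence of the separating property under the filling (established above via Theorem~\ref{Thm:DF}(a)). Proving the required rigidity, with the geometric toolbox of \cite{CIOS2}, is where the real work lies. A parallel route would be to choose the relators in the seed step so that the quotient $K$ is itself rigid, i.e.\ already has trivial outer automorphism group, by a genericity argument; then one would only need $K$ to be perfect, torsion-free and hyperbolic with property (T), and could dispense with the filling altogether.
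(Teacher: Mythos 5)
There is a genuine gap, and it sits exactly where you flag it yourself. Your construction of a perfect, torsion-free, hyperbolic, Kazhdan seed group $K$ is fine (it mirrors the paper's use of a torsion-free uniform lattice in $Sp(n,1)$ and \cite[Corollary 3.24]{CIOS3}), and the VCC and trivial-abelianization parts of your filling argument go through via Lemma~\ref{Lem:cent} and perfectness of quotients. But the whole argument for ${\rm Out}(B)=\{1\}$ hinges on producing an infinite-order $g\in K$ with $E(g)=\langle g\rangle$ such that $\alpha_j(g)$ is not conjugate to $g^{\pm 1}$ for \emph{every} nontrivial outer class $\alpha_j$. This is not a routine selection: it requires (i) the rigidity statement that an automorphism of $K$ preserving, up to inversion, the conjugacy class of every primitive loxodromic element is inner, and (ii) a simultaneity/genericity argument to find one $g$ that separates all $N-1$ classes at once. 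Neither is proved in your proposal; you explicitly defer them to ``the geometric toolbox of \cite{CIOS2}.'' That deferred step is precisely the hard content of the lemma — note that Theorem~\ref{Thm:Group}, whose proof you are adapting, \emph{assumes} ${\rm Out}(G)=\{1\}$ for the seed group, which is exactly the hypothesis Lemma~\ref{trivialout} is meant to supply, so the adaptation is circular unless the selection of $g$ is actually carried out.

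The paper avoids this by quoting \cite[Theorem 4.5]{CIOS2} as a black box: the seed $\overline H$ (non-elementary, torsion-free, hyperbolic, perfect, property (T)) admits a non-elementary hyperbolic quotient $B$ with ${\rm Out}(B)=\{1\}$, and the only work left is to check that $B$ is VCC by inspecting the structure of finite-order elements and their centralizers in that construction. Your closing remark — choose the quotient ``so that $K$ is itself rigid... by a genericity argument'' — is in effect the paper's actual route; as written, your main argument replaces a citation with an unproved rigidity claim of comparable depth. (Two smaller points: the multi-peripheral Dehn filling step showing $\alpha(g)^n\notin\ll g^n\rr$ when $\alpha(g)\not\sim g^{\pm1}$ is plausible but is not covered by Theorem~\ref{Thm:DF} as stated, which fills a single peripheral subgroup; and finiteness of ${\rm Out}(K)$ from property (T) should be attributed to Paulin/Rips--Sela-type results, which you use without reference.)
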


\begin{proof}
Let $L$ be a uniform lattice in $Sp(n,1)$. By Selberg's lemma, there exists a finite index torsion-free subgroup $H\le L$. Being a finite index subgroup of $L$, the group $H$ is also a uniform lattice in $Sp(n,1)$ and, therefore, is hyperbolic and has property (T). 

As shown in \cite[Corollary~3.24]{CIOS3}, $H$ has a non-elementary, torsion-free, hyperbolic quotient group $\overline H$ such that $\overline H=[\overline H,\overline H]$. Further, by \cite[Theorem 4.5]{CIOS2}, ${\overline H}$ has a non-elementary hyperbolic quotient group $B$ such that ${\rm Out}(B)=\{ 1\}$. 
Obviously, $B$ has property (T) and trivial abelianization. It remains to prove that $B$ is a VCC group. The latter property is not stated explicitly in \cite[Theorem 4.5]{CIOS2} but can be easily extracted from its proof as explained below. In the next paragraph, ``the proof" refers to the proof of Theorem 4.5 in \cite{CIOS2}.

First note that, in the notation of \cite{CIOS2}, we apply Theorem 4.5 in the case $Q=\{ 1\}$ here and so $\overline G=B$. The group $\overline G=B$ constructed in the proof contains elements $s$ and $t$ of order $2$ and $3$, respectively. As explained in the last two sentences of the fourth paragraph of the proof, every finite order element of $B$ is conjugate to one of the elements $1$, $s$, $t$, $t^{-1}$, and the centralizers of elements $s$, $t$, and $t^{-1}$ are virtually cyclic (see equations (32) and (38) in \cite{CIOS2}).  Therefore, 
the centralizer of every non-trivial, finite order element of $B$ is virtually cyclic. Combining this with Corollary \ref{Cor:VCC}, we obtain that $B$ is a VCC group.  
\end{proof}

\section{Preliminaries on von Neumann algebras}\label{prelimvN}
In this section, we recall some terminology concerning von Neumann algebras along with several results, mainly from \cite{CIOS1}, which will be needed in the proof of Theorem \ref{main}. 

\subsection{Tracial von Neumann algebras}\label{tracialvN}

A {\it tracial von Neumann algebra} is a pair $(\M,\tau)$ consisting of a von Neumann algebra $\M$  and a {\it trace} $\tau$, i.e., a normal  faithful tracial state $\tau:\M\rightarrow\mathbb C$.  We denote by L$^2(\M)$ the Hilbert space obtained as the closure of $\M$ with respect to the $2$-norm given by $\|x\|_2=\sqrt{\tau(x^*x)}$. We denote by $\sU(\M)$ the group of {\it unitaries}  of $\M$ and by $(\M)_1=\{x\in \M\mid \|x\|\leq 1\}$ the {\it unit ball} of $\M$. 
We will always assume that $\M$ is {\it separable} or, equivalently, that L$^2(\M)$ is a separable Hilbert space. We denote by $\text{Aut}(\M)$ the group of $\tau$-preserving automorphisms of $\M$.  For $u\in\sU(\M)$,  the {\it inner} automorphism $\text{Ad}(u)\in\text{Aut}(\M)$ is given by $\text{Ad}(u)(x)=uxu^*$.
By von Neumann's bicommutant theorem, if $X\subset \M$ is a set closed under adjoint which contains the identity, then $X''$ is the von Neumann algebra generated by $X$.

The tracial von Neumann algebra $(\M,\tau)$  is called {\it amenable} if there exists a sequence $\xi_n\in \text{L}^2(\M)\otimes \text{L}^2(\M)$ such that $\langle x\xi_n,\xi_n\rangle\rightarrow\tau(x)$ and $\|x\xi_n-\xi_nx\|_2\rightarrow 0$, for every $x\in \M$.


Let $\Q\subset \M$ be a von Neumann subalgebra, which we will always assume to be unital. 
We denote by $\Q'\cap \M=\{\text{$x\in \M\mid xy=yx$, for all $y\in \Q$}\}$ the {\it relative commutant} of $\Q$ in $\M$ and by $\sN_\M(\Q)=\{u\in\sU(\M)\mid u\Q u^*=\Q\}$ the {\it normalizer} of $\Q$ in $\M$. The {\it center} of $\M$ is given by $\sZ(\M)=\M'\cap \M$. 
A {\it Cartan subalgebra} $\Q\subset\M$ is a maximal abelian von Neumann algebra such that $\sN_\M(\Q)''=\M$. 

 Following \cite[Proposition 4.1]{Po01}, we say that $\Q\subset \M$ has the {\it relative property (T)} if for every $\varepsilon>0$, we can find a finite set $F\subset \M$ and $\delta>0$ such that if $\mathcal H$ is an $\M$-bimodule and $\xi\in\mathcal H$ satisfies $\|\langle\cdot\xi,\xi\rangle-\tau(\cdot)\|\leq\delta,\|\langle\xi\cdot,\xi\rangle-\tau(\cdot)\|\leq\delta$ and $\|x\xi-\xi x\|\leq\delta$, for every $x\in F$, then there is $\eta\in\mathcal H$ such that $\|\eta-\xi\|\leq\varepsilon$ and $y\eta=\eta y$, for every $y\in \Q$.
For instance, $\Q\subset\M$ has the relative property (T) whenever $\Q=\text{L}(G)$, for a countable property (T) group $G$.


We next record a remark mentioned in the introduction.

\begin{rem}\label{index}
    Let $\M$ be a II$_1$ factor. Let $t\in \mathcal I_\M\cap (0,\infty)$. Then there exists a subfactor $\Nn\subset \M$ with $[\M:\Nn]=t$. Since $e_{\Nn}\langle\M,e_{\Nn}\rangle e_{\Nn}\cong\Nn$ and $\langle\M,e_{\Nn}\rangle$ is a II$_1$ factor, we have that $\langle\M,e_{\Nn}\rangle\cong \Nn^{t}$.  Since we have a unital $*$-homomorphism $\M\rightarrow\langle\M,e_{\Nn}\rangle$,
 we obtain a unital $*$-homomorphism $\M\rightarrow\Nn^t\subset\M^t$. Thus, $t\in\mathcal F_{\text{s}}(M)$, which proves that $\mathcal I_\M\subset\mathcal F_{\text{s}}(M)\cup\{\infty\}$.
\end{rem}

\subsection {Intertwining-by-bimodules} We recall from  \cite [Theorem 2.1, Corollary 2.3]{Po03} Popa's {\it intertwining-by-bimodules} theory.
\begin{thm}[\cite{Po03}]\label{corner} Let $(\M,\tau)$ be a tracial von Neumann algebra and $\P\subset p\M p, \Q\subset q\M$q be von Neumann subalgebras.
Then the following conditions are equivalent:

\begin{enumerate}

\item There exist projections $p_0\in \P, q_0\in \Q$, a $*$-homomorphism $\theta:p_0\P p_0\rightarrow q_0\Q q_0$  and a nonzero partial isometry $v\in q_0\M p_0$ such that $\theta(x)v=vx$, for all $x\in p_0\P p_0$.

\item There is no sequence $u_n\in\sU(\P)$ satisfying $\|{\rm  E}_\Q(x^*u_ny)\|_2\rightarrow 0$, for all $x,y\in p\M$.

\end{enumerate}

If these conditions hold true,  we write $\P\prec_{\M}\Q$.
Moreover, if $\P p'\prec_{\M}\Q$ for any nonzero projection $p'\in \P'\cap p\M p$, we write $\P\prec^{\rm s}_{\M}\Q$.
\end{thm}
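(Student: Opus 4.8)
The assertion is the equivalence of conditions (1) and (2) in Theorem~\ref{corner}. I would argue inside the Jones basic construction for $\Q\subseteq\M$: let $e=e_\Q$ be the orthogonal projection of $\mathrm L^2(\M)$ onto the closure of $\Q$, let $\langle\M,e\rangle=(J\Q J)'$ be the associated semifinite von Neumann algebra, and let $\mathrm{Tr}$ be its canonical trace, normalized by $\mathrm{Tr}(xey)=\tau(xy)$ for $x,y\in\M$. Throughout I will use the pull-down identity $exe=\mathrm E_\Q(x)e$, the fact that $e$ commutes with $\Q$, the $\|\cdot\|_{2,\mathrm{Tr}}$-density of $\M e\M$ in $\mathrm L^2(\langle\M,e\rangle,\mathrm{Tr})$, and the fact that, for a projection $f\in\langle\M,e\rangle$, the value $\mathrm{Tr}(f)$ equals the right $\Q$-dimension of the $\P$-$\Q$-subbimodule $f\,\mathrm L^2(\M)\subseteq\mathrm L^2(\M)$. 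The substantive implication is (2)$\Rightarrow$(1); the converse is the standard ``easy'' direction.

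For (2)$\Rightarrow$(1): assuming (2), a routine diagonalization argument produces finitely many $x_1,\dots,x_k\in(p\M)_1$ and a constant $c>0$ with $\sum_{i,j}\|\mathrm E_\Q(x_i^*ux_j)\|_2^2\ge c$ for every $u\in\sU(\P)$. Put $T=\sum_i x_ie x_i^*\in\langle\M,e\rangle_+$, so $\mathrm{Tr}(T)=\sum_i\|x_i\|_2^2<\infty$ and $pT=Tp=T$. The heart of the proof is to consider the $\|\cdot\|_{2,\mathrm{Tr}}$-closed convex hull $\mathcal C$ of $\{uTu^*:u\in\sU(\P)\}$ inside the Hilbert space $\mathrm L^2(\langle\M,e\rangle,\mathrm{Tr})$: it is bounded (conjugation by $u\in\sU(\P)$ is isometric on $p\langle\M,e\rangle p$), so it contains a unique element $T_0$ of minimal norm, and uniqueness forces $uT_0u^*=T_0$ for all $u\in\sU(\P)$; hence $T_0\ge0$, $pT_0p=T_0$, and $T_0$ commutes with $\P$. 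The pull-down identity gives $\mathrm{Tr}(uTu^*\cdot T)=\sum_{i,j}\|\mathrm E_\Q(x_i^*ux_j)\|_2^2\ge c$ for every $u\in\sU(\P)$, which passes to the limit along the convex hull to yield $\langle T_0,T\rangle_{2,\mathrm{Tr}}\ge c>0$, so $T_0\ne0$; and lower semicontinuity of $\mathrm{Tr}$ gives $\mathrm{Tr}(T_0)\le\mathrm{Tr}(T)<\infty$. Choosing $\varepsilon>0$ with $f:=\mathbf 1_{[\varepsilon,\infty)}(T_0)\ne0$, we obtain a nonzero projection $f\in\P'\cap p\langle\M,e\rangle p$ with $\mathrm{Tr}(f)\le\varepsilon^{-1}\mathrm{Tr}(T_0)<\infty$.

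It remains to pass from $f$ to the concrete data of condition (1). The subspace $\mathcal H=f\,\mathrm L^2(\M)\subseteq p\,\mathrm L^2(\M)$ is a nonzero $\P$-$\Q$-subbimodule of $\mathrm L^2(\M)$ of finite right $\Q$-dimension $\mathrm{Tr}(f)$. Picking a finite right $\Q$-basis $\eta_1,\dots,\eta_m\in\M$ of $\mathcal H$ --- so that $q_0:=[\mathrm E_\Q(\eta_i^*\eta_j)]\in M_m(\Q)$ is a projection and the reconstruction formula $\xi=\sum_i\eta_i\mathrm E_\Q(\eta_i^*\xi)$ holds on $\mathcal H$ up to the correction by $q_0$ --- one checks, using left $\P$-invariance of $\mathcal H$, that $x\mapsto[\mathrm E_\Q(\eta_i^*x\eta_j)]$ is a unital $*$-homomorphism $p\P p\to q_0M_m(\Q)q_0$ and that it intertwines with right multiplication by $\P$ on the column vector $(\eta_1^*,\dots,\eta_m^*)^{\mathrm t}$. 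Conjugating $q_0$ inside $M_m(\Q)$ to a projection of the form $q\otimes e_{11}$ and descending from this $m\times m$ amplification then yields a nonzero partial isometry $v$ and a $*$-homomorphism $\theta$ with $\theta(x)v=vx$, i.e. condition (1). For (1)$\Rightarrow$(2): conversely, the intertwiner makes $\overline{v^*\Q}$ (closure in $\|\cdot\|_2$) a nonzero subbimodule of finite right $\Q$-dimension, hence gives a $\mathrm{Tr}$-finite projection $f_0\in(p_0\P p_0)'\cap\langle\M,e\rangle$; approximating $f_0$ by an element $\sum_l a_le b_l\in\M e\M$ and computing the pairings $\langle uf_0u^*,f_0\rangle_{2,\mathrm{Tr}}$ for $u\in\sU(\P)$ (via the corner $p_0$) shows that a fixed finite family of quantities $\|\mathrm E_\Q(a_l^*ua_{l'})\|_2$ cannot all tend to $0$ along a sequence in $\sU(\P)$, which is precisely what the negation of (2) would require; hence (1) implies (2).

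I expect the main obstacle to be this last passage between the operator-algebraic reformulations and the explicit data $(p_0,q_0,\theta,v)$: producing the finite right $\Q$-basis of $\mathcal H$, verifying multiplicativity of the candidate homomorphism, and --- most delicately --- carrying out the $M_m(\Q)\to\M$ descent, together with the bookkeeping that makes a proper corner $p_0\subsetneq p$ in condition (1) interact correctly with the full unitary group $\sU(\P)$ that appears in condition (2). By contrast, the conceptual core --- the minimal-norm-element argument, which converts the asymptotic non-degeneracy recorded in (2) into a genuine $\P$-central finite-trace projection in the basic construction --- is short, and is exactly what makes the criterion work.
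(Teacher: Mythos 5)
The paper does not prove this statement but cites it directly from Popa \cite{Po03}; your sketch reproduces exactly Popa's basic-construction argument (the minimal $\|\cdot\|_{2,\mathrm{Tr}}$ element in the convex hull, the finite-trace projection commuting with $\P$, and the passage to a Pimsner--Popa basis to extract the intertwiner), so it is the standard route. One small caveat worth noting: in your (1)$\Rightarrow$(2) direction the projection onto $\overline{v^*\Q}$ lies only in $(p_0\P p_0)'\cap\langle\M,e\rangle$, so to pair against $\sU(\P)$ (not merely $\sU(p_0\P p_0)$) one should instead use the $\P$-invariant projection onto $\overline{\P v^*\Q}$, which still has finite $\mathrm{Tr}$ since $\overline{\P p_0}$ has finite right $p_0\P p_0$-dimension; you flag this corner bookkeeping as the delicate point, and this is precisely how it is fixed.
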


The following is a corollary of the structure of normalizers in crossed products arising from actions of hyperbolic  groups \cite[Theorem 1.4]{PV12} observed in \cite[Theorem 3.10]{CIOS1}.

\begin{thm}\label{relativeT}

Let $G,H$ be countable groups, with $H$ a non-elementary subgroup of a hyperbolic group. Let $\delta:G\rightarrow H$ be a homomorphism and $G\curvearrowright (\Q,\tau)$ a trace preserving action on a tracial von Neumann algebra  $(\Q,\tau)$. Let $\M=\Q\rtimes G$ and $\P\subset p\M p$ be an amenable von Neumann subalgebra.  
Assume that there is a von Neumann subalgebra $\R\subset \sN_{p\M p}(\P)''$ with the relative property (T) such that $\R\nprec_{\M}\Q\rtimes\ker(\delta)$. Then  $\P\prec_{\M}^{\rm s}\Q\rtimes\ker(\delta)$.

\end{thm}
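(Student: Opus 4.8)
\textbf{Plan for the proof of Theorem \ref{relativeT}.}
The strategy is to reduce the statement to the structure theorem for normalizers of amenable subalgebras in crossed products by hyperbolic groups, as established in \cite[Theorem 1.4]{PV12} (see \cite[Theorem 3.10]{CIOS1}). First I would dispose of the hypothesis concerning the homomorphism $\delta$: since we are interested in $\Q\rtimes\ker(\delta)$ sitting inside $\M=\Q\rtimes G$, I would push everything forward along $\delta$, i.e., consider the quotient map $G\to G/\ker(\delta)\hookrightarrow H$ and use that $G$ acts on $\Q$, so that $\M$ contains the copy $\Q\rtimes\ker(\delta)$ as the ``kernel part'' of a crossed product over the non-elementary subgroup $\delta(G)<H$ of a hyperbolic group. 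Concretely, one realizes $\M$ as a crossed product of $\Q\rtimes\ker(\delta)$ by the group $\delta(G)$ (choosing a set-theoretic section of $G\to\delta(G)$); this is the standard ``iterated crossed product'' decomposition $\Q\rtimes G\cong(\Q\rtimes\ker\delta)\rtimes\delta(G)$, valid since $\ker\delta\lhd G$.

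Next I would invoke \cite[Theorem 1.4]{PV12} in the form recorded as \cite[Theorem 3.10]{CIOS1}: for an action of a (subgroup of a) hyperbolic group $\Lambda=\delta(G)$ on a tracial von Neumann algebra $\mathcal B=\Q\rtimes\ker\delta$, and an amenable subalgebra $\P\subset p\M p$, the normalizer algebra $\sN_{p\M p}(\P)''$ admits the dichotomy: either $\P\prec_{\M}\mathcal B$ (the ``base'' algebra, which here is exactly $\Q\rtimes\ker\delta$), or $\sN_{p\M p}(\P)''$ is amenable relative to $\mathcal B$ inside $\M$. I would then run the argument on $\R\subset\sN_{p\M p}(\P)''$. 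Suppose the desired conclusion $\P\prec_{\M}^{\mathrm s}\Q\rtimes\ker(\delta)$ fails; then there is a nonzero projection $p'\in\P'\cap p\M p$ with $\P p'\nprec_{\M}\Q\rtimes\ker\delta$. Replacing $\P$ by $\P p'$ and $p$ by $p'$ (noting $\R$ still normalizes, after cutting by the central support), I would apply the dichotomy to get that $\sN_{p'\M p'}(\P p')''$ — hence its subalgebra $\R p'$ — is amenable relative to $\Q\rtimes\ker\delta$ in $\M$.

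The final step is to derive a contradiction from relative amenability together with the relative property (T) and the assumption $\R\nprec_{\M}\Q\rtimes\ker(\delta)$. This is the familiar ``(T) vs.\ relative amenability'' rigidity: a subalgebra with the relative property (T) that is amenable relative to $\mathcal B$ inside $\M$ must satisfy $\R\prec_{\M}\mathcal B$ — this is precisely the spectral-gap/malleable-deformation argument of Popa, in the form appearing in \cite[Theorem 1.6]{PV12} or its predecessors (one uses the s-malleable deformation of $\M$ associated with the crossed product $\mathcal B\rtimes\Lambda$, combined with relative property (T), to conclude intertwining into $\mathcal B$). But $\R\prec_{\M}\Q\rtimes\ker\delta$ directly contradicts the hypothesis $\R\nprec_{\M}\Q\rtimes\ker(\delta)$. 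Hence the assumption was false and $\P\prec_{\M}^{\mathrm s}\Q\rtimes\ker(\delta)$ holds. The main obstacle is the bookkeeping in the first paragraph — carefully identifying $\Q\rtimes\ker\delta$ as the base algebra of a crossed product by a hyperbolic-subgroup action so that the cited theorems apply verbatim, and keeping track of which projections and central supports one is cutting by when passing to $\P p'$ and ensuring $\R$ still normalizes the cut-down. Once that is set up, both the dichotomy and the (T)-vs-amenability steps are direct citations.
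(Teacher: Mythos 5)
The paper does not actually prove Theorem \ref{relativeT}: it imports it verbatim from \cite[Theorem 3.10]{CIOS1}, so your proposal has to be measured against the proof given there. You have correctly identified the two pillars, namely the Popa--Vaes normalizer dichotomy of \cite[Theorem 1.4]{PV12} and the principle that a subalgebra with the relative property (T) which is amenable relative to $\Q\rtimes\ker(\delta)$ inside $\M$ must intertwine into $\Q\rtimes\ker(\delta)$; that last step is sound. The genuine gap is in your first paragraph. First, the extension $1\to\ker\delta\to G\to\delta(G)\to 1$ need not split, so a set-theoretic section only exhibits $\M$ as a \emph{cocycle} (twisted) crossed product of $\Q\rtimes\ker\delta$ by $\delta(G)$, whereas \cite[Theorem 1.4]{PV12} is stated for genuine trace-preserving actions; some untwisting or stabilization argument is required before it can be quoted. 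Second, and more seriously, $\delta(G)$ is only a subgroup of $H$, hence of a hyperbolic group, and such subgroups need not themselves be hyperbolic (they need not even be finitely generated or quasi-convex), so the acting group in your decomposition is not one to which \cite[Theorem 1.4]{PV12} applies ``verbatim.'' Both problems are circumvented in \cite{CIOS1} by the comultiplication-along-$\delta$ trick: one considers the ambient hyperbolic group $H_1\supset H$ and the $*$-homomorphism $\Delta\colon\M\to\M\,\overline{\otimes}\,\text{L}(H_1)$, $\Delta(bu_g)=bu_g\otimes v_{\delta(g)}$ for $b\in\Q$, $g\in G$, identifies $\M\,\overline{\otimes}\,\text{L}(H_1)$ with the crossed product of $\M$ by the \emph{trivial} (hence genuine) action of the honest hyperbolic group $H_1$, applies \cite{PV12} there to $\Delta(\P)$ and $\Delta(\R)$, and transfers the conclusions back via the standard equivalences of the form $\Delta(\mathcal X)\prec\M\otimes 1$ if and only if $\mathcal X\prec_{\M}\Q\rtimes\ker\delta$.

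A secondary issue is the cut-down by $p'$. Elements of $\sN_{p\M p}(\P)$ need not commute with $p'\in\P'\cap p\M p$, so $\R p'$ does not obviously lie in $\sN_{p'\M p'}(\P p')''$, nor does $\R p'$ obviously inherit the relative property (T), since $p'$ need not commute with $\R$. The usual remedy is to note that $p'\in\P'\cap p\M p\subset\sN_{p\M p}(\P)''$, run the dichotomy so as to conclude in the non-intertwining case that the whole normalizer algebra, hence $\R$, is amenable relative to $\Q\rtimes\ker\delta$ (yielding $\R\prec_{\M}\Q\rtimes\ker\delta$ and the contradiction), and to extract the strong intertwining $\P\prec^{\rm s}_{\M}\Q\rtimes\ker\delta$ from the version of the dichotomy localized at projections of $\sZ(\sN_{p\M p}(\P)'')$ rather than by naively replacing $\P$ with $\P p'$. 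As written, your reduction step does not quite go through, although this part is a standard technical hurdle with standard fixes.
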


\begin{rem}
    We also record the following fact which we will use implicitly later on. In the context of Theorem \ref{corner}, if $\P$ is diffuse and has property (T) (i.e.,  $\P\subset \P$ has the relative property (T)) and $\Q$ is amenable, then $\P\nprec_\M \Q$. This in particular implies that if $G,H$ are subgroups of a countable group $K$ such that $G$ is infinite and has property (T) and $H$ is amenable, then there does not exist a finite set $F\subset K$ with $G\subset FHF$. Indeed, otherwise we would have that $\text{L}(G)\prec_{\text{L}(K)}\text{L}(H)$, which contradicts the above fact.
\end{rem}

\subsection{Cocycle superrigidity results}
Let $G$ be a countable group and $G\curvearrowright^\sigma (X,\mu)$ be a probability measure preserving ({\it p.m.p.}) action on a standard probability space $(X,\mu)$. 
We equip $\text{L}^{\infty}(X)$ with the trace given by integration against $\mu$ and denote also by $\sigma$ the trace preserving action $G \curvearrowright^{\sigma}\text{L}^{\infty}(X)$ given by $\sigma_g(f)(x)=f (g^{-1} x)$ for every $g\in G$, $x\in X$ and $f\in {\rm L}^\infty(X)$.
From now, we will not distinguish between $\sigma$ and the associated trace preserving action.

Let $H$ be a Polish group. A measurable map $c:G\times X\rightarrow H$ is called a {\it 1-cocycle for $\sigma$} if it satisfies $c(g_1g_2,x)=c(g_1,g_2x)c(g_2,x)$, for all $g_1,g_2\in G$ and almost every $x\in X$.
Two $1$-cocycles $c,c':G\times X\rightarrow H$ are {\it cohomologous} if there is a measurable map $\varphi:X\rightarrow H$ such that $c'(g,x)=\varphi(gx)c(g,x)\varphi(x)^{-1}$, for all $g\in G$ and almost every $x\in X$. Let $\mathcal V$ be a family of Polish groups. Following \cite[5.6.0]{Po05},  we say that $\sigma$ is $\mathcal V$-{\it cocycle superrgid} if for any $H\in\mathcal V$, any $1$-cocycle $c:G\times X\rightarrow H$ is cohomologous of a homomorphism $\delta:G\rightarrow H$.
By \cite[Definition 2.5]{Po05}, we denote by $\sU_{\text{fin}}$ the family of Polish groups which are isomorphic to a closed subgroup of $\sU(\M)$, for some separable tracial von Neumann algebra $(\M,\tau)$.

Let $G\curvearrowright^\sigma (Y,\nu)^I$ be a p.m.p. action, where $(Y,\nu)$ is a standard probability space and $I$ is a countable set endowed with a $G$-action. 
Following \cite[Definition 2.5]{KV15}, we say that $\sigma$ is {\it built over} $G\curvearrowright I$ if  $\sigma_g(\text{L}^\infty(Y)^{i})=\text{L}^\infty(Y)^{g i}$, for all $g\in G$ and $i\in I$. Equivalently, there is $\alpha:G\times I\rightarrow\text{Aut}(Y,\nu)$ such that $(\sigma_g(y))_i=\alpha(g,i)(y_{g^{-1} i})$, for all $g\in G$ and $y=(y_i)_{i\in I}\in Y^I$.
For example, the generalized Bernoulli action $G\curvearrowright (Y,\nu)^I$ is clearly built over $G\curvearrowright I$.

For us it will be important that,   as observed in \cite[Lemma 3.4 and Remark 3.5]{CIOS1},  such actions  occur naturally  in the setting of wreath-like product groups:

\begin{lem}[\cite{CIOS1}]\label{bover} Let $A,B$ be countable groups and $G\in\W\R(A,B\curvearrowright I)$, where $A$ is abelian and $B\curvearrowright I$ is an action on a countable set $I$. Let $\varepsilon:G\rightarrow B$ be the quotient homomorphism and $(u_g)_{g\in G}$ the canonical generating unitaries of $\emph{L}(G)$.
Let $G\curvearrowright I$ and $G\curvearrowright^\sigma \emph{L}(A^{(I)})$ be the action and the trace preserving action given by $g\cdot i=\varepsilon(g)i$ and $\sigma_g=\emph{Ad}(u_g)$, for every $g\in G$ and $i\in I$. Let $B\curvearrowright^\alpha \emph{L}(A^{(I)})$ be the trace preserving action given by $\alpha_g=\sigma_{\widehat{g}}$, where $\widehat{g}\in G$ is any element such that $\varepsilon(\widehat{g})=g$, for every $g\in B$.

Then $\sigma$ is built over $G\curvearrowright I$ and $\alpha$ is built over $B\curvearrowright I$.
\end{lem}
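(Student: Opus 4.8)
The plan is to verify the two claims directly from the structure of a wreath-like product $G\in\W\R(A,B\curvearrowright I)$, using the defining conjugation rule $gA_ig^{-1}=A_{\e(g)i}$ together with the identification $\text{L}(A^{(I)})=\bigotimes_{i\in I}\text{L}(A_i)$ coming from the fact that $A$ (hence $A^{(I)}=\bigoplus_{i\in I}A_i$) is abelian. Write $\text{L}(A_i)$ for the copy of $\text{L}(A)$ sitting inside $\text{L}(A^{(I)})$ as the $i$-th tensor factor; this is a well-defined abelian von Neumann subalgebra since the $A_i$ commute and $A^{(I)}=\prod^{\oplus}A_i$ internally. With the notation of the statement, $\text{L}^\infty(Y)^i$ is exactly $\text{L}(A_i)$, where $(Y,\nu)$ is the probability space with $\text{L}^\infty(Y)=\text{L}(A)$.

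First I would establish that $\sigma$ is built over $G\curvearrowright I$. Fix $g\in G$ and $i\in I$. Since $\sigma_g=\text{Ad}(u_g)$ and conjugation by $u_g$ in $\text{L}(G)$ implements the conjugation automorphism of $G$ on the subgroup $A^{(I)}$, we have $u_g\,\text{L}(A_i)\,u_g^* = \text{L}(gA_ig^{-1}) = \text{L}(A_{\e(g)i})$, using the wreath-like rule $gA_ig^{-1}=A_{\e(g)i}$. Recalling $g\cdot i=\e(g)i$, this reads $\sigma_g(\text{L}^\infty(Y)^i)=\text{L}^\infty(Y)^{g\cdot i}$, which is precisely the definition (from \cite[Definition 2.5]{KV15}) of $\sigma$ being built over $G\curvearrowright I$. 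The equivalent reformulation in terms of a map $\alpha\colon G\times I\to\text{Aut}(Y,\nu)$ then follows formally: $\alpha(g,i)$ is the automorphism of $\text{L}(A_{g\cdot i})\cong\text{L}(A)$ obtained by post-composing $\sigma_g|_{\text{L}(A_i)}\colon\text{L}(A_i)\to\text{L}(A_{g\cdot i})$ with the fixed identifications $\text{L}(A_i)\cong\text{L}(A)\cong\text{L}(A_{g\cdot i})$; the coordinatewise formula $(\sigma_g(y))_i=\alpha(g,i)(y_{g^{-1}i})$ is just this identification written out.

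For the second claim, I would check that $\alpha$ is well-defined and built over $B\curvearrowright I$. Well-definedness: if $\widehat g,\widehat g'\in G$ both satisfy $\e(\widehat g)=\e(\widehat g')=g$, then $\widehat g^{-1}\widehat g'\in\ker\e=A^{(I)}$, which is abelian, so $\text{Ad}(u_{\widehat g^{-1}\widehat g'})$ acts trivially on $\text{L}(A^{(I)})$; hence $\sigma_{\widehat g}|_{\text{L}(A^{(I)})}=\sigma_{\widehat g'}|_{\text{L}(A^{(I)})}$ and $\alpha_g$ does not depend on the choice of lift. One checks $\alpha$ is an action: $\alpha_{g_1}\alpha_{g_2}=\sigma_{\widehat{g_1}}\sigma_{\widehat{g_2}}=\sigma_{\widehat{g_1}\widehat{g_2}}=\alpha_{g_1g_2}$ on $\text{L}(A^{(I)})$, since $\widehat{g_1}\widehat{g_2}$ is a valid lift of $g_1g_2$. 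Finally, for the ``built over'' property: $\alpha_g(\text{L}^\infty(Y)^i)=\sigma_{\widehat g}(\text{L}(A_i))=\text{L}(A_{\e(\widehat g)i})=\text{L}(A_{gi})=\text{L}^\infty(Y)^{gi}$ by the previous paragraph applied to $\widehat g$, and $\e(\widehat g)=g$.

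The content here is essentially bookkeeping; there is no serious obstacle. The one point requiring a little care is the identification $\text{L}(A^{(I)})=\bigotimes_{i\in I}\text{L}(A_i)$ and the resulting identification of $\text{L}(A_i)$ with the coordinate subalgebra $\text{L}^\infty(Y)^i$ — this uses crucially that $A$ is abelian, so that $A^{(I)}$ is an abelian group and its von Neumann algebra is the (infinite) tensor product of the factor algebras, with the internal direct sum decomposition of the group matching the tensor decomposition of the algebra. Once this identification is fixed, everything reduces to transporting the single group-theoretic identity $gA_ig^{-1}=A_{\e(g)i}$ through the functor $\text{L}(\cdot)$, exactly as indicated; this is the argument of \cite[Lemma 3.4 and Remark 3.5]{CIOS1}.
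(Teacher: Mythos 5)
Your proof is correct and is essentially the expected argument: the paper itself gives no proof of this lemma, quoting it directly from \cite{CIOS1}, and the verification there amounts to exactly the bookkeeping you carry out (transporting $gA_ig^{-1}=A_{\e(g)i}$ through $\mathrm{Ad}(u_g)$, and using that $\ker\e=A^{(I)}$ is abelian to get well-definedness of $\alpha$). The only nitpick is that the tensor decomposition $\mathrm{L}(A^{(I)})=\overline{\otimes}_{i\in I}\mathrm{L}(A_i)$ holds for arbitrary $A$; abelianness is what you need to identify this with $\mathrm{L}^\infty(Y^I)$ for a probability space $(Y,\nu)=(\widehat A,\mathrm{Haar})$ and, as you correctly use, to make $\alpha_g$ independent of the lift $\widehat g$.
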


As noted in \cite[Theorem 3.6]{CIOS1}, actions built over actions satisfy Popa's cocycle superrigidity theorem \cite{Po05}. In particular, we have the following.

\begin{thm}[\cite{CIOS1}]\label{CS}
Let $G$ be a countable group with property (T) and  $G\curvearrowright I$ be an action on a countable set $I$ with infinite orbits.

Then any p.m.p. action $G\curvearrowright^{\sigma}(Y,\nu)^I$ built over $G\curvearrowright I$ is $\sU_{\emph{fin}}$-cocycle superrigid.
\end{thm}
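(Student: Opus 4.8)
The plan is to reduce the statement to Popa's cocycle superrigidity theorem for malleable / Bernoulli-type deformations \cite{Po05}, using the structural hypothesis that $\sigma$ is built over $G\curvearrowright I$ together with the fact that $G$ has property (T) and the action $G\curvearrowright I$ has infinite orbits. First I would recall that, since $\sigma$ is built over $G\curvearrowright I$, there is a cocycle-type datum $\alpha\colon G\times I\to\mathrm{Aut}(Y,\nu)$ describing the action coordinate-wise; in particular $\mathrm{L}^\infty(Y)^I=\bigotimes_{i\in I}\mathrm{L}^\infty(Y)^i$ as a $G$-algebra, with $G$ permuting the tensor factors along the orbits of $G\curvearrowright I$ and acting on each factor by an automorphism. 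The decomposition of $I$ into $G$-orbits $I=\bigsqcup_k I_k$ gives a corresponding tensor decomposition $\mathrm{L}^\infty(Y)^I=\bigotimes_k \mathrm{L}^\infty(Y)^{I_k}$, and each factor $\mathrm{L}^\infty(Y)^{I_k}$ carries a $G$-action built over the transitive action $G\curvearrowright I_k\cong G/\mathrm{Stab}(i_k)$, where $\mathrm{Stab}(i_k)$ has infinite index (since the orbit is infinite).

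Next, for a single infinite-orbit piece $G\curvearrowright (Y,\nu)^{G/\Sigma}$ with $[G:\Sigma]=\infty$, this is exactly (a twisted version of) a generalized Bernoulli action with an infinite base-like coordinate set carrying a $G$-action with trivial, hence amenable — actually here we just need that it is a co-induced / built-over action admitting the relevant malleable deformation. The key point is that any action built over $G\curvearrowright I$ with all orbits infinite admits Popa's s-malleable deformation (the "rotation" $\alpha_t$ on the doubled system, as in \cite{Po05} and \cite[Theorem 3.6]{CIOS1}): one forms $\mathrm{L}^\infty(Y\times Y)^I$ with the diagonal $G$-action, and the flip/rotation on each pair of coordinates commutes with the $G$-action because $G$ only permutes coordinates and acts by automorphisms within each coordinate — the deformation is built coordinate-wise and is therefore $G$-equivariant. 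Combined with property (T) of $G$, Popa's theorem then says that any $1$-cocycle $c\colon G\times (Y^I,\nu^I)\to H$ with values in a group $H\in\sU_{\mathrm{fin}}$ is cohomologous to one that is independent of the coordinates in one "half," and a standard transversality/closability argument upgrades this to: $c$ is cohomologous to a cocycle that does not depend on $Y^I$ at all, i.e. to a homomorphism $\delta\colon G\to H$. I would cite \cite[Theorem 3.6]{CIOS1} for precisely this implication, since that reference already packages "built over an infinite-orbit action + property (T)" $\Rightarrow$ $\sU_{\mathrm{fin}}$-cocycle superrigid.

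The main obstacle — and the only genuinely non-formal point — is verifying that the s-malleable deformation required by \cite{Po05} really is available for an arbitrary action built over $G\curvearrowright I$, not just for the plain Bernoulli action $\bigotimes_I(Y,\nu)$: one must check that the "twist" $\alpha(g,i)\in\mathrm{Aut}(Y,\nu)$ in each coordinate does not obstruct the construction of the commuting flow $\alpha_t$ on the doubled system $\bigotimes_I(Y\times Y,\nu\times\nu)$. This works because the deformation acts only on the $Y\times Y$ factor of each coordinate while $G$ acts on that factor through $\alpha(g,i)\times\alpha(g,i)$, which commutes with the coordinate-wise rotation $R_t\times\cdots$; hence $\alpha_t$ is $G$-equivariant and the malleability/transversality estimates go through verbatim. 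Once this is in hand, the reduction to the transitive pieces and the reassembly over $k$ is routine (a cocycle into $\sU_{\mathrm{fin}}$ superrigid on each tensor factor is superrigid on the whole, using that $\sU_{\mathrm{fin}}$ is closed under the relevant operations and that property (T) is inherited). I therefore expect the proof to consist of a short invocation of \cite[Theorem 3.6]{CIOS1} (or, if written out, of Popa's cocycle superrigidity machinery) after the elementary observation that "built over $G\curvearrowright I$ with infinite orbits" supplies exactly the hypotheses of that machinery.
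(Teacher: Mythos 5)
Your approach matches the paper's: the proof is indeed a direct invocation of \cite[Theorem 3.6]{CIOS1}, which already packages the malleable-deformation argument for actions built over an infinite-orbit action, so your discussion of why the coordinate-wise twist does not obstruct the s-malleable deformation is re-deriving the content of the cited result rather than something the paper proves here. The one step you gloss over is that \cite[Theorem 3.6]{CIOS1} yields untwisting only for cocycles valued in the full unitary group $\sU(\M)$ of a tracial von Neumann algebra, not directly for an arbitrary closed subgroup $H\le\sU(\M)$: the paper first views the $H$-valued cocycle as $\sU(\M)$-valued, untwists it there, and then uses that $\sigma$ is weakly mixing (which follows from the infinite-orbit hypothesis) together with \cite[Proposition 3.5]{Po05} to conclude that the untwisting can be taken inside $H$ itself. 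This descent step is standard but necessary, and your write-up should include it (or at least note that weak mixing of $\sigma$ is what makes it work).
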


\begin{proof}
Let $H\in\sU_{\text{fin}}$ and $c:G\times Y^I\rightarrow H$ be a $1$-cocycle. Then $H$ is isomorphic to a closed subgroup of $\sU(\M)$, for some separable tracial von Neumann algebra $(\M,\tau)$.
By applying \cite[Theorem 3.6]{CIOS1}, when viewed as a $\sU(\M)$-valued cocycle, $c$ is cohomologous to a homomorphism $G\rightarrow\sU(\M)$. Since $\sigma$ is weakly mixing, \cite[Proposition 3.5]{Po05} implies that $c$ is cohomologous to a homomorphism $G\rightarrow H$, as desired.
\end{proof}

The following consequence of Theorem \ref{CS} was proved in \cite[Proposition 4.27]{CIOS3}.

 \begin{cor}[\cite{CIOS3}]\label{injectiveOut}
Let $A,B$ be countable groups and $B\curvearrowright I$ an action with infinite orbits. Assume that $G\in\mathcal W\mathcal R(A,B\curvearrowright I)$ has property (T).  Let $\delta:G\rightarrow G$ be a homomorphism such that $\delta(A^{(I)})\subset A^{(I)}$.
Identify $B=G/A^{(I)}$ and let $\rho:B\rightarrow B$ be the homomorphism given by $\rho(gA^{(I)})=\delta(g)A^{(I)}$.   If $\rho\in \emph{Inn}(B)$, then $\delta\in \emph{Inn}(G)$.
\end{cor}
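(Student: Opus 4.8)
\textbf{Proof proposal for Corollary \ref{injectiveOut}.}

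The plan is to promote the given homomorphism $\delta\colon G\to G$ to a $\sU_{\text{fin}}$-valued $1$-cocycle over a suitable action, apply the cocycle superrigidity theorem (Theorem \ref{CS}), and then unpack the resulting untwisting into an inner automorphism. First I would set up the relevant action: by Lemma \ref{bover}, the conjugation action $B\curvearrowright^\alpha \text{L}(A^{(I)})$ (equivalently, the associated p.m.p.\ action on the Bernoulli-type space $(Y,\nu)^I$ with $Y=\widehat A$) is built over $B\curvearrowright I$, which has infinite orbits by hypothesis. Since $G$ (hence, as a quotient, any group receiving $G$) need not have property (T) — but $G$ does, and $\alpha$ factors through $G\curvearrowright I$ via $\varepsilon$ — the action $G\curvearrowright^\sigma\text{L}(A^{(I)})$ is built over $G\curvearrowright I$ and the latter has infinite orbits. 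Thus Theorem \ref{CS} applies to $\sigma$.

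Next I would construct the cocycle. The restriction $\delta|_{A^{(I)}}\colon A^{(I)}\to A^{(I)}$, together with the way $\delta$ fails to be equivariant for $\sigma$, gives a measurable map $c\colon G\times (Y,\nu)^I\to \mathcal A$ — concretely, picking for each $g\in G$ a lift and comparing $\delta(g)$ with $\widehat{\rho(gA^{(I)})}$, the discrepancy lives in $A^{(I)}$ (since $\rho$ is induced by $\delta$ on the quotient), and the failure of multiplicativity of this discrepancy is absorbed by the $\sigma$-action on $\text{L}(A^{(I)})=\text{L}^\infty(Y^I)$; this is exactly the cocycle identity $c(g_1g_2,x)=c(g_1,g_2x)c(g_2,x)$. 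The target $\mathcal U(\text{L}(A^{(I)}))\supseteq A^{(I)}$ lies in $\sU_{\text{fin}}$, so $c$ is a $\sU_{\text{fin}}$-cocycle for $\sigma$. By Theorem \ref{CS}, $c$ is cohomologous to a homomorphism $G\to\mathcal U(\text{L}(A^{(I)}))$; combined with the fact that $\rho\in\text{Inn}(B)$ — say $\rho=\text{Ad}(b_0)$ for some $b_0\in B$ — the untwisting can be arranged so that after conjugating $\delta$ by an element of $G$ lifting $b_0$ we may assume $\rho=\text{id}_B$, i.e.\ $\delta(g)g^{-1}\in A^{(I)}$ for all $g\in G$, and the cohomology witness $\varphi\colon Y^I\to\mathcal U(\text{L}(A^{(I)}))$ together with a scalar character correction shows $\delta(g) = w g w^{-1}$ for a fixed $w$. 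The key point is that the "unitary" untwisting $\varphi$ actually lands in the \emph{group} $A^{(I)}\subset\text{L}(A^{(I)})$, not merely in the unitary group of the algebra: this follows because both $\delta(g)$ and $g$ normalize $A^{(I)}$ inside $G\subset\text{L}(G)$ in the same way, so $\varphi$ normalizes $\text{L}^\infty(Y^I)$ and implements a measure-space automorphism conjugating one $G$-action into the other, which by the structure of $A^{(I)}<G$ forces $\varphi$ to be (essentially) a group element.

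The main obstacle I anticipate is the last bookkeeping step: extracting a genuine group element $w\in G$ with $\delta=\text{Ad}(w)$ from the a priori weaker conclusion that $\delta(g)$ and $g$ are conjugate by a unitary in $\text{L}(G)$ after the $\rho$-correction. One must show that the untwisting unitary, which superrigidity produces in $\mathcal U(\text{L}(A^{(I)}))$, can be taken in $A^{(I)}$ (up to a character of $G$, which one then checks is trivial or absorbed because $A$ is abelian and the character restricted to $A^{(I)}$ must be consistent with $\delta|_{A^{(I)}}$ being a genuine homomorphism into $A^{(I)}$). This is a standard but delicate argument in the Popa-rigidity toolbox — identifying the range of a cocycle untwisting inside a group rather than an algebra — and is presumably where \cite[Proposition 4.27]{CIOS3} does its real work; I would model the argument on the treatment of such "group-valued untwisting" statements there and in \cite{CIOS1}, invoking the explicit wreath-like structure $G/A^{(I)}=B$ to pin down $\varphi$.
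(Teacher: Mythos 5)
First, a caveat: the paper does not prove this corollary itself — it quotes it from \cite[Proposition 4.27]{CIOS3} as a ``consequence of Theorem \ref{CS}'' — so there is no in-paper proof to compare against. Your overall strategy is nonetheless the natural (and surely the intended) one: compose $\delta$ with an inner automorphism to reduce to $\rho=\mathrm{id}_B$, observe that $c_g:=\delta(g)g^{-1}$ then lies in $A^{(I)}$ and satisfies $c_{gh}=c_g\sigma_g(c_h)$ for the conjugation action $\sigma$ of $G$ on $\text{L}(A^{(I)})$, apply Theorem \ref{CS} (by Lemma \ref{bover}, $\sigma$ is built over $G\curvearrowright I$, which has infinite orbits), and then show that the untwisting unitary can be taken in $A^{(I)}$, whence $\delta=\text{Ad}(a)$ for some $a\in A^{(I)}$.

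The genuine gap is in the justification you offer for that last step. You claim the untwisting unitary $\varphi$ lies in $A^{(I)}$ ``because \dots $\varphi$ normalizes $\text{L}^\infty(Y^I)$ and implements a measure-space automorphism.'' Since $\varphi$ is produced by superrigidity as an element of $\sU(\text{L}^\infty(Y^I))$ and this algebra is abelian, every such unitary normalizes it, so the statement carries no information; an arbitrary unitary of $\text{L}^\infty(\widehat A^I)$ is an arbitrary measurable $\mathbb T$-valued function, whereas membership in $A^{(I)}=\widehat X$ (for $X=\widehat A^I$) means being a \emph{character} of $X$ — a strictly stronger condition that your argument does not address. The correct mechanism is the one this paper itself uses in the proof of Theorem \ref{jiang}: from $c_g=\eta(g)\varphi\sigma_g(\varphi)^*$ with $c_g\in\widehat X$, apply the comultiplication $\Delta(u_a)=u_a\otimes u_a$ to deduce that $(\varphi\otimes\varphi)^*\Delta(\varphi)$ is a $\sigma^{\otimes 2}$-eigenvector, hence a scalar by weak mixing (which holds because $G\curvearrowright I$ has infinite orbits), and then \cite[Lemma 7.1]{IPV10} yields $\varphi\in\mathbb T\cdot\widehat X$. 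The residual character $\eta$ is then trivial because $c_g$ and $a\sigma_g(a)^*$ are both canonical group unitaries, which are linearly independent in $\text{L}(A^{(I)})$. With the key step repaired along these lines, your outline does produce a proof of the statement.
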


\subsection{Cartan subalgebras}

For further use, we also recall  two conjugacy results for Cartan subalgebras from \cite{Io10}. These results are stated and proved as such in \cite[Lemmas 3.7 and 3.8]{CIOS1}.

\begin{lem}[\cite{Io10}]\label{conj1}
Let $\M$ be a II$_1$ factor, $\A\subset \M$ be a Cartan subalgebra and $\D\subset \M$ be an abelian von Neumann subalgebra. Let $\C=\D'\cap \M$ and assume that $\C\prec_{\M}^{\rm s}\A$. Then there exists $u\in\sU(\M)$ such that $\D\subset u\A u^*\subset \C$.
\end{lem}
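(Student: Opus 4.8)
The plan is to reduce the conclusion to the single assertion that there is $u\in\sU(\M)$ with $u\A u^*\subseteq\C'\cap\M$, and then to derive this from the intertwining hypothesis by Popa's techniques. For the reduction, suppose $u\in\sU(\M)$ satisfies $u\A u^*\subseteq\C'\cap\M$. Since $\D\subseteq\C=\D'\cap\M$, every element of $\C'\cap\M$ commutes with $\D$; hence $u\A u^*$ commutes with $\D$, so $\D\subseteq(u\A u^*)'\cap\M$. As $u\A u^*$ is a conjugate of the maximal abelian subalgebra $\A$, it is itself maximal abelian in $\M$, so $(u\A u^*)'\cap\M=u\A u^*$ and thus $\D\subseteq u\A u^*$. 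Finally $u\A u^*$ is abelian and contains $\D$, whence $u\A u^*\subseteq\D'\cap\M=\C$. So $\D\subseteq u\A u^*\subseteq\C$. (The converse implication is trivial, so the two statements are equivalent.) Hence it suffices to find $u\in\sU(\M)$ with $u\A u^*\subseteq\C'\cap\M$.

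To produce such a $u$, I would apply Theorem \ref{corner} to $\C\prec_\M\A$ (which follows from $\C\prec_\M^{\rm s}\A$): there are nonzero projections $p_0\in\C$, $q_0\in\A$, a $*$-homomorphism $\theta\colon p_0\C p_0\to q_0\A q_0$ and a nonzero partial isometry $v\in q_0\M p_0$ with $\theta(x)v=vx$ for all $x\in p_0\C p_0$. A direct computation gives that $p:=v^*v$ commutes with $p_0\C p_0$ and --- after shrinking $q_0$ and using that $\A$ is maximal abelian --- that $q:=vv^*\in\A$; then $x\mapsto v^*xv$ carries the corner $q\A q$ onto a maximal abelian subalgebra $\B_0$ of $p\M p$ containing $\{v^*\theta(x)v : x\in p_0\C p_0\}$, a corner of $\C$, so that $\B_0$ lies in the relative commutant of that corner of $\C$. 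In other words, a corner of a conjugate of $\A$ already sits inside the relative commutant of the corresponding corner of $\C$. Now the \emph{strong} intertwining $\C\prec_\M^{\rm s}\A$ permits choosing these data so that the corners of $\C$ involved exhaust $1$ (a maximality argument: if some nonzero projection in $\C'\cap\M$ were left over, the hypothesis would provide a further intertwiner contradicting maximality), and the regularity $\sN_\M(\A)''=\M$ of the Cartan subalgebra $\A$ allows transporting the corner intertwiners by normalizing unitaries of $\A$ and gluing them, in the standard way, into a single unitary $u\in\sU(\M)$ with $u\A u^*\subseteq\C'\cap\M$. This is the mechanism behind Popa's uniqueness of Cartan subalgebras up to unitary conjugacy (cf.\ \cite{Po03,Po01}), and it is where the hypothesis that $\A$ is Cartan, not merely maximal abelian, enters in an essential way.

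The crux is this globalization step: turning the local intertwiner furnished by Theorem \ref{corner} into a genuine unitary conjugating all of $\A$ into $\C'\cap\M$. It requires a careful maximality/exhaustion argument (a maximal family of partial isometries with pairwise orthogonal ranges in $\A$), the use of the \emph{strong} intertwining hypothesis to see that the ranges fill up $1$, and a verification that the assembled unitary conjugates the whole of $\A$, and not just a corner, into $\C'\cap\M$. Everything else --- the reduction above, and the passage from $u\A u^*\subseteq\C'\cap\M$ back to $\D\subseteq u\A u^*\subseteq\C$ --- is elementary bookkeeping with commutants and maximal abelian subalgebras.
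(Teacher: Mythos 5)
Your reduction is where the argument breaks down: you replace the conclusion by the stronger assertion that some unitary satisfies $u\A u^*\subseteq \C'\cap\M$, and that assertion is false in general. Indeed, if $u\A u^*\subseteq \C'\cap\M$ for some $u\in\sU(\M)$, then $\C\subseteq(\C'\cap\M)'\cap\M\subseteq (u\A u^*)'\cap\M=u\A u^*$ (the last equality because a unitary conjugate of a maximal abelian subalgebra is again maximal abelian), so $\C$ would have to be abelian, i.e., $\D$ would have to be maximal abelian in $\M$. The lemma only assumes $\D$ abelian, and in the very place it is applied in this paper --- the proof of Theorem \ref{rigembed}, with $\D=\theta(\P)$ and $\C=\sR=\theta(\P)'\cap p\sN p$, which is later identified with $\Q\overline{\otimes}\T$ for a possibly non-abelian $\T\subseteq\mathbb M_k(\mathbb C)$ --- the relative commutant $\C$ is genuinely non-abelian. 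Hence no unitary with $u\A u^*\subseteq\C'\cap\M$ exists there, and your strategy cannot produce the required $u$. (Your parenthetical claim that the converse implication is trivial fails for the same reason: $\D\subseteq u\A u^*\subseteq\C$ gives $\C'\cap\M\subseteq u\A u^*$, but the inclusion $u\A u^*\subseteq\C'\cap\M$ would force $\C=u\A u^*$ to be abelian.) The correct target is weaker: one needs $u\A u^*$ to be a maximal abelian subalgebra of $\M$ squeezed between $\D$ and $\C$, not one commuting with all of $\C$.

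A reduction that does work: choose a maximal abelian subalgebra $\B$ of $\C$ with $\D\subseteq\B$; then any $x\in\B'\cap\M$ commutes with $\D$, hence lies in $\C$, hence in $\B'\cap\C=\B$, so $\B$ is automatically maximal abelian in $\M$ and satisfies $\D\subseteq\B\subseteq\C$. The content of the lemma is then that $\C\prec^{\rm s}_{\M}\A$ forces such a $\B$ to be unitarily conjugate to the Cartan subalgebra $\A$; this intertwining-to-conjugacy step is essentially what you sketch in your second paragraph, and it is indeed where the strong intertwining and the regularity of $\A$ are used. Note finally that the paper gives no proof of this lemma --- it is quoted from \cite{Io10} and proved as \cite[Lemma 3.7]{CIOS1} --- so there is no in-paper argument to compare against; but as written, your first paragraph aims the whole proof at a statement that is false in the intended generality.
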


\begin{lem}[\cite{Io10}]\label{conj2}
Let $\M$ be a II$_1$ factor, $\A\subset \M$ a Cartan subalgebra, $\D\subset \M$ an abelian von Neumann subalgebra and let $\C=\D'\cap \M$. Assume that $\C\prec_{\M}^{\text{s}}\A$ and $\D\subset \A\subset \C$.
Let $(\alpha_g)_{g\in G}$ be an action of a group $G$ on $\C$ such that $\alpha_g=\emph{Ad}(u_g)$, for some $u_g\in\sN_\M(\D)$, for every $g\in G$.
 Assume that the restriction of the action $(\alpha_g)_{g\in G}$  to $\D$ is free.
Then there is an action $(\beta_g)_{g\in G}$ of $G$ on $\C$ such that
\begin{enumerate}
\item\label{unu} for every $g\in G$ we have that $\beta_g=\alpha_g\circ\emph{Ad}(\omega_g)=\emph{Ad}(u_g\omega_g)$, for some $\omega_g\in\sU(\C)$, and
\item\label{doi} $\A$ is $(\beta_g)_{g\in G}$-invariant and the restriction of
 $(\beta_g)_{g\in G}$ to $\A$ is free.
\end{enumerate}
\end{lem}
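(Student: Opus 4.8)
The plan is to pass to the Feldman--Moore groupoid model of the Cartan inclusion $\A\subset\M$, to manufacture the correcting unitaries $\omega_g$ one group element at a time by an intertwining argument, and then to adjust them so that the family $(\mathrm{Ad}(u_g\omega_g))_{g\in G}$ closes up to a genuine action of $G$ on $\C$.

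First I would reformulate the hypotheses concretely. Write $\A=\text{L}^\infty(X)$ and realize $\M=\text{L}_w(\mathcal R)$ for a countable p.m.p.\ equivalence relation $\mathcal R$ on $(X,\mu)$ and a scalar $2$-cocycle $w$. Since $\D\subset\A$ is abelian, $\D=\text{L}^\infty(Y)$ for a factor map $\pi\colon X\to Y$, and a short computation with the Feldman--Moore expansion gives $\C=\D'\cap\M=\text{L}_w(\mathcal R_0)$, where $\mathcal R_0=\{(x,x')\in\mathcal R\mid\pi(x)=\pi(x')\}$. In particular $\A$ is a Cartan subalgebra of $\C$ and $\sZ(\C)\subset\A$. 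A routine application of Popa's criterion (Theorem \ref{corner}) shows that the assumption $\C\prec^{\rm s}_{\M}\A$ forces almost every class of $\mathcal R_0$ to be finite; hence $\C$ is a measurable bundle of finite-dimensional algebras over its centre, and consequently any two Cartan subalgebras of $\C$ are unitarily conjugate inside $\C$ (fibrewise, every MASA of a matrix algebra is a conjugate of the diagonal, and one patches the conjugating unitaries together by measurable selection).

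Next, fix $g\in G$. Since $u_g$ normalizes $\D$ we have $\alpha_g(\C)=u_g\C u_g^*=\C$ and $\alpha_g(\D)=\D$, so $\alpha_g^{-1}(\A)=u_g^*\A u_g$ is again a Cartan subalgebra of $\C$. By the last sentence of the previous paragraph there is $\omega_g\in\sU(\C)$ with $\omega_g\A\omega_g^*=\alpha_g^{-1}(\A)$, equivalently $u_g\omega_g\in\sN_\M(\A)$. Put $\beta_g:=\mathrm{Ad}(u_g\omega_g)=\alpha_g\circ\mathrm{Ad}(\omega_g)$. Then $\beta_g\in\mathrm{Aut}(\C)$, $\beta_g(\A)=\A$ and $\beta_g(\D)=\D$; moreover $\mathrm{Ad}(\omega_g)$ fixes $\D$ pointwise (as $\D\subset\sZ(\C)$), so $\beta_g|_\D=\alpha_g|_\D$. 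For $g\ne 1$ the latter is free by hypothesis, and a transformation of $X$ lifting an (essentially) fixed-point-free transformation of $Y$ is itself essentially fixed-point-free, so $\beta_g|_\A$ is free. Thus any admissible choice of the $\omega_g$'s already yields conclusion (1) and the freeness in (2); it remains only to arrange that $(\beta_g)_{g\in G}$ be a homomorphism.

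Choosing the $\omega_g$ coherently is the main obstacle. Writing $c_{g,h}:=u_gu_hu_{gh}^{-1}\in\sU(\sZ(\C))$ for the $2$-cocycle of the action $\alpha$ and expanding products, one gets $\beta_g\beta_h\beta_{gh}^{-1}=\mathrm{Ad}(\nu_{g,h})|_\C$, where $\nu_{g,h}:=(u_g\omega_g)(u_h\omega_h)(u_{gh}\omega_{gh})^{-1}$ lies in $\sN_\M(\A)\cap\C=\sN_\C(\A)$ and $(\nu_{g,h})_{g,h}$ is a (non-abelian) $2$-cocycle of $G$ with coefficients in $\sN_\C(\A)$. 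Since replacing $\omega_g$ by $\omega_g\mu_g$ with $\mu_g\in\sN_\C(\A)$ preserves $u_g\omega_g\mu_g\in\sN_\M(\A)$ and modifies $(\nu_{g,h})$ by a coboundary, it suffices to show that the class of $(\nu_{g,h})$ in the $2$-cohomology of $G$ with coefficients in $\sN_\C(\A)/\sU(\sZ(\C))$ is trivial; once this is done, $\beta_g\beta_h=\beta_{gh}$ and $(\beta_g)_{g\in G}$ is the desired action. This is precisely where the remaining hypotheses enter: because $\mathcal R_0$ has finite classes, $\sN_\C(\A)$ is, modulo $\sU(\A)$, the full group $[\mathcal R_0]$ of the finite-dimensional-algebra bundle $\C$ over the spectrum $W$ of $\sZ(\C)$; and because $G\curvearrowright\D$, hence $G\curvearrowright W$, is free, the cocycle $(\nu_{g,h})$ can be trivialized by a measurable-selection and exhaustion (maximality) argument that successively defines the $\mu_g$ on larger and larger $G$-invariant Borel subsets of $W$, in the same spirit as the uniqueness-of-Cartan-subalgebra arguments of \cite{Io10}. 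I expect this cohomological step to be the only genuinely delicate point; the rest is bookkeeping.
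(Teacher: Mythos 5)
Your framework is sound, and all the preliminary reductions are correct: $\C'\cap\M=\sZ(\C)\subset\A$ (by maximal abelianness of $\A$), $\D\subset\sZ(\C)$, finiteness of the fibres of $\C$ over $\sZ(\C)$ from $\C\prec^{\rm s}_\M\A$ together with finiteness of $\M$, the first-pass choice of $\omega_g$ by unitary conjugacy of the Cartan subalgebras $\A$ and $\alpha_g^{-1}(\A)$ of the type I algebra $\C$, freeness of $\beta_g|_\A$ because it covers the free action on $\mathrm{spec}(\D)$, and the identification of the coherence obstruction $\nu_{g,h}=v_gv_hv_{gh}^{-1}\in\sN_\C(\A)$ that must be pushed into $\sZ(\C)$.

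The gap is that you leave the trivialization of $(\nu_{g,h})$ --- which is the actual content of the lemma --- entirely unproved, deferring to ``a measurable-selection and exhaustion (maximality) argument ... in the same spirit as [Io10].'' That step is not bookkeeping; it is where the freeness hypothesis is genuinely used and must be written out. The argument that closes it is a transversal construction: since $G\curvearrowright W=\mathrm{spec}(\sZ(\C))$ is essentially free (it covers the free $G$-action on $\mathrm{spec}(\D)$), choose a Borel fundamental domain $W_0\subset W$ and write $x=g(x)\cdot x_0$ with $x_0\in W_0$; use the measurable field of isomorphisms $\mathrm{Ad}(u_{g(x)}\omega_{g(x)})\colon\C_{x_0}\to\C_x$ to rigidify the fibres along each orbit, and define $\beta_g$ on $\C_x$ as the resulting difference map $\C_x\to\C_{x_0}\to\C_{gx}$. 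Then $\beta_g\beta_h=\beta_{gh}$ is automatic, $\A$ is preserved because each fibre map carries $\A_{x_0}$ to $\A_x$, and one reads off $\omega_g\in\sU(\C)$ by comparing $\beta_g$ with $\mathrm{Ad}(u_g\omega_g^0)$; you still need to verify measurability of these fields of unitaries. Two smaller issues: the quotient $\sN_\C(\A)/\sU(\sZ(\C))$ is non-abelian, so the phrase ``$2$-cohomology with coefficients in'' it needs to be interpreted with care (the transversal argument sidesteps this entirely); and the detour through the Feldman--Moore model and the scalar $2$-cocycle of $\mathcal R$ is unnecessary, since the whole argument takes place inside the finite type I direct integral of $\C$ over $\sZ(\C)$.
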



\subsection{Strong rigidity for orbit equivalence embeddings}\label{STRONG}

The following generalization of \cite[Theorem 0.5]{Po04} to a large class of actions built over was obtained in \cite[Theorem 4.1]{CIOS1}.
If $\sR$ is a countable p.m.p. equivalence relation on a standard probability space $(X,\mu)$, then the {\it full group} $[\sR]$ consists of all automorphisms $\theta$ on $(X,\mu)$ such that $(\theta(x),x)\in\sR$, for almost every $x\in X$. If $G\curvearrowright (X,\mu)$ is a p.m.p. action, then  $\sR(G\curvearrowright X)=\{(x_1,x_2)\in X\times X\mid G\cdot x_1=G\cdot x_2\}$ is its {\it orbit equivalence relation}.
\begin{thm}[\cite{CIOS1}]\label{SOE}
Let $ D$ be an ICC group and $ D\curvearrowright^{\alpha}(X,\mu)=(Y^I,\nu^I)$ be a measure preserving action built over an action $D\curvearrowright I$, where $(Y,\nu)$ is a probability space.  Let $\widetilde D= D\times\mathbb Z/n\mathbb Z$ and $(\widetilde X,\widetilde\mu)=(X\times\mathbb Z/n\mathbb Z,\mu\times c)$, where $n\in\mathbb N$ and $c$ is the counting measure of $\mathbb Z/n\mathbb Z$.
Consider the action $\widetilde D\curvearrowright^{\widetilde\alpha} (\widetilde X,\widetilde\mu)$ given by $(g,a)\cdot (x,b)=(g\cdot x,a+b)$.

Let $B$ be a countable group with a normal subgroup $B_0$ such that the pair $(B,B_0)$ has the relative property (T).
Let $X_0\subset \widetilde X$ be a measurable set  and $B\curvearrowright^{\beta} (X_0,\widetilde\mu_{|X_0})$ be a weakly mixing, free, measure preserving action such that  $B\cdot x\subset\widetilde D\cdot x$, for almost every $x\in X_0$.
 Assume that for every $i\in I$ and  $g\in D\setminus\{1\}$, there is a sequence $(h_m)\subset B_0$ such that  for every  $s,t\in\widetilde D$ we have \begin{equation}\begin{split}
 &\lim_{m\ra \infty}\widetilde\mu(\{x\in X_0\mid h_m\cdot x\in s(\emph{Stab}_D(i)\times\mathbb Z/n\mathbb Z)t\cdot x\})= 0,\text{ and }\\
&\lim_{m\ra \infty}\widetilde\mu(\{x\in X_0\mid h_m\cdot x\in s(\emph{C}_D(g)\times\mathbb Z/n\mathbb Z)t\cdot x\})= 0.\end{split}
\end{equation}

Then one can find a subgroup $D_1< D$,  a group isomorphism $\delta:B\rightarrow D_1$ and $\theta\in [\sR(\widetilde D\curvearrowright\widetilde X)]$ such that
 $\theta(X_0)=X\equiv X\times\{0\}$ and
$\theta\circ\beta(h)=\alpha(\delta(h))\circ\theta$, for every $h\in B$.
\end{thm}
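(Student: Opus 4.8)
## Proof Plan for Theorem \ref{SOE}

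The plan is to reduce the statement to Popa's strong rigidity theorem for orbit-equivalence (OE) embeddings, \cite[Theorem 0.5]{Po04}, by verifying that all of its hypotheses are met in the present setting — the only genuine difference being that the source action $D\curvearrowright^\alpha X$ is replaced by the ``stabilized'' action $\widetilde D\curvearrowright^{\widetilde\alpha}\widetilde X$, and that $D\curvearrowright X$ is merely \emph{built over} $D\curvearrowright I$ rather than being a genuine Bernoulli action. First I would record that, since $D$ is ICC and $D\curvearrowright I$ has infinite orbits on the support of $\alpha$ (this follows from the mixing-type hypotheses (3.1), which force $\mathrm{Stab}_D(i)$ and $C_D(g)$ to be ``small'' relative to $B_0$), the action $\alpha$ is weakly mixing; consequently $\widetilde\alpha=\alpha\times(\text{translation on }\ZZ/n\ZZ)$ is weakly mixing on each $\widetilde D$-orbit after restricting to an ergodic component, and $\widetilde D=D\times\ZZ/n\ZZ$ is again ICC. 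I would then observe that the ``malleability/deformation'' input needed for Popa's theorem is exactly the $\sU_{\mathrm{fin}}$-cocycle superrigidity of $\alpha$ supplied by Theorem \ref{CS} (note $B$ carries the relative property (T) pair $(B,B_0)$, and $\widetilde D\curvearrowright\widetilde X$ is built over $\widetilde D\curvearrowright I\times\ZZ/n\ZZ$ with infinite orbits), so the cocycle untwisting step goes through verbatim.

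The core of the argument is the following. Because $B\curvearrowright^\beta(X_0,\widetilde\mu_{|X_0})$ is a weakly mixing, free, measure-preserving action with $B\cdot x\subset\widetilde D\cdot x$ a.e., there is a Zimmer cocycle $\omega\colon B\times X_0\to\widetilde D$ determined by $h\cdot x=\omega(h,x)\cdot x$; freeness of $\widetilde\alpha$ (which holds on a conull set since $\alpha$ is free away from the measure-zero locus and the $\ZZ/n\ZZ$-factor is free) makes $\omega$ well defined. Restricting to $B_0$ and using that $(B,B_0)$ has relative property (T), I would apply the cocycle superrigidity of $\widetilde\alpha$ — together with the weak mixing needed to upgrade a $\sU(\M)$-valued cocycle to an honest homomorphism, exactly as in the proof of Theorem \ref{CS} — to conclude that, after replacing $\beta$ by a conjugate inside $[\sR(\widetilde D\curvearrowright\widetilde X)]$ (this is where the automorphism $\theta$ is born), the cocycle $\omega$ becomes an actual group homomorphism $\delta_0\colon B\to\widetilde D$ on all of $B$. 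The mixing hypotheses (3.1) then enter decisively: they guarantee that the image $\delta_0(B)$ intersects each $\mathrm{Stab}_D(i)\times\ZZ/n\ZZ$ and each $C_D(g)\times\ZZ/n\ZZ$ in a ``transient'' way, which — combined with the ICC property of $B$ and the structure of $\widetilde D=D\times\ZZ/n\ZZ$ — forces $\delta_0$ to be injective, its image $D_1$ to actually lie in $D\times\{0\}$ (equivalently, the $\ZZ/n\ZZ$-component of $\delta_0$ is trivial, using that $B=[B,?]$-type rigidity or simply that a homomorphism from an ICC group that would project nontrivially to $\ZZ/n\ZZ$ contradicts the weak mixing that survives on $X\times\{0\}$), and to intertwine $\beta$ with $\alpha\circ\delta$. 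Conjugating $\theta$ further so that $\theta(X_0)=X\times\{0\}$ and setting $\delta=\delta_0$ with codomain $D_1<D$ yields the required conclusion $\theta\circ\beta(h)=\alpha(\delta(h))\circ\theta$.

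The main obstacle, and the place where real work is needed, is the transition from ``$\delta_0$ is a homomorphism $B\to\widetilde D$'' to the three refined conclusions: \emph{injectivity}, \emph{containment of the image in $D\times\{0\}$}, and \emph{identification of $D_1$ as a subgroup of $D$ with $\beta\cong\alpha\circ\delta$ under $\theta$}. This is precisely what the two limits in (3.1) are designed to control: a kernel element, or an element of $B$ mapped into a stabilizer or centralizer subgroup, would produce a $B_0$-quasi-invariant finite-measure set contradicting the vanishing in (3.1) (one argues by a standard Fubini/weak-containment argument comparing $\widetilde\mu(\{x : h_m\cdot x\in sS t\cdot x\})$ with the measure of the corresponding cocycle fiber over $\delta_0$). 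The $\ZZ/n\ZZ$-component being trivial is a separate but easier point: the translation action of $\ZZ/n\ZZ$ is not weakly mixing, so any nontrivial projection of $\delta_0$ to $\ZZ/n\ZZ$ would destroy the weak mixing of $\beta$ restricted along a finite-index subgroup, contradicting the weak mixing of $\beta$ (or one invokes that $\theta(X_0)$ must be a single $\ZZ/n\ZZ$-slice, which pins it to $X\times\{0\}$). I expect the write-up to closely parallel \cite[Theorem 4.1]{CIOS1} / \cite[Theorem 0.5]{Po04}, with the only new bookkeeping being the harmless $\ZZ/n\ZZ$-factor, so the ``hard part'' is really ensuring the mixing hypotheses (3.1) are invoked at exactly the right points to kill kernels and to place $D_1$ inside $D$.
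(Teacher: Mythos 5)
The paper does not prove this statement: Theorem \ref{SOE} carries the attribution \cite{CIOS1} and is introduced as ``a generalization of \cite[Theorem 0.5]{Po04} to a large class of actions built over \ldots obtained in \cite[Theorem 4.1]{CIOS1}.'' No internal proof is given here, so there is nothing in this paper to compare your argument against line-by-line.

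Evaluated on its own, your sketch follows the right template for such a result: set up a Zimmer cocycle $\omega\colon B\times X_0\to\widetilde D$, untwist it using the cocycle-superrigidity machinery of Theorem \ref{CS} (relative property (T) of $(B,B_0)$ plus the built-over structure of $\widetilde\alpha$), and then invoke the separability conditions (3.1) to force injectivity of $\delta_0$ and to pin $\theta(X_0)$ to a single slice. But several points that carry the weight of the argument are glossed over. First, essential freeness of $\widetilde\alpha$ is needed to even define $\omega$; it is not a stated hypothesis, and you assume it rather than deriving it from $D$ being ICC and the built-over structure. Second, the identity $\theta(X_0)=X\times\{0\}$ encodes $\widetilde\mu(X_0)=1$; this is a genuine output of the ``strong rigidity'' core of Popa's argument (the amplification is shown to be integral and equal to $1$), not something that falls out of weak mixing of $\beta$ — you hand-wave past this, and it is exactly what the $\ZZ/n\ZZ$-thickening of the target action is there to handle. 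Third, your argument that the $\ZZ/n\ZZ$-component of $\delta_0$ vanishes ``because the translation action is not weakly mixing'' is not correct as stated: a nontrivial homomorphism $B\to\ZZ/n\ZZ$ does not by itself destroy weak mixing of $\beta$ (its kernel is merely of finite index). The correct order of logic is that once $\theta(X_0)$ is identified with $X\times\{0\}$, the intertwining relation $\theta\circ\beta(h)=\widetilde\alpha(\delta_0(h))\circ\theta$ forces $\widetilde\alpha(\delta_0(h))$ to preserve $X\times\{0\}$, hence the $\ZZ/n\ZZ$-coordinate of $\delta_0(h)$ to be $0$. So the plan has the right shape, but the hard step — the amplified strong-rigidity argument that simultaneously produces $\theta$, normalizes $X_0$ to $X\times\{0\}$, and trivializes the $\ZZ/n\ZZ$-component — is deferred rather than carried out.
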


Here,  $\text{Stab}_D(i)=\{h\in D\mid hi=i\}$ and $\text{C}_D(g)=\{h\in D\mid gh=hg\}$, for $i\in I$ and $g\in D$.

The following result due to Popa \cite{Po04} (see \cite[Lemma 4.4]{CIOS1}) shows  that the weakly mixing condition from Theorem \ref{SOE} is automatically satisfied after passing to an ergodic component of a finite index subgroup.

\begin{lem}[\cite{Po04}]\label{wmix}
Assume the setting of Theorem \ref{SOE}.  Then one can find a finite index subgroup $ S< B$ and a $\beta(S)$-invariant non-null measurable set $Y\subset X_0$ such that $\widetilde\mu(\beta(h)(Y)\cap Y)=0$, for every $h\in B\setminus S$, and the restriction of $\beta_{| S}$ to $Y$ is weakly mixing.

Moreover, there exist a partition $X_0=\sqcup_{i=1}^lX_i$ into non-null measurable sets,  for some  $l\in\mathbb N\cup\{\infty\}$,  and a finite index subgroup $ S_i< B$ such that $X_i$ is $\beta( S_i)$-invariant and
 the restriction of $\beta_{| S_i}$ to $X_i$ is weakly mixing, for all $i$.
\end{lem}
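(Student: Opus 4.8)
The plan is to first reduce the exhaustive partition to the single-piece statement, and then prove the single-piece statement by analyzing the maximal compact factor of $\beta$. Freeness, measure preservation, the orbit-containment $B\cdot x\subseteq\widetilde D\cdot x$, and the asymptotic escape conditions in the setting of Theorem \ref{SOE} all pass to any $\beta$-invariant non-null subset of $X_0$; so, granting that every such subset contains a finite-index subgroup $S<B$ and an $S$-invariant non-null $Y$ with $\widetilde\mu(\beta(h)Y\cap Y)=0$ for $h\in B\setminus S$ and $\beta_{|S}$ weakly mixing on $Y$, a Zorn's lemma argument over families of such pieces with pairwise-disjoint $B$-saturations produces a maximal one; it is automatically countable (the disjoint saturations have positive measure in a probability space) and, by maximality together with the single-piece statement applied to the leftover, exhausts $X_0$ up to a null set. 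Splitting each saturation into its finitely many finite-index translates yields the partition $X_0=\sqcup_iX_i$. So the content is the single-piece statement, which I will prove for $\beta$ ergodic (the non-ergodic case follows by applying the ergodic case inside each ergodic component of $\beta$, these being $\beta$-invariant).

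For ergodic $\beta$, let $\mathcal Q\subseteq L^\infty(X_0)$ be the von Neumann subalgebra generated by all finite-dimensional $B$-invariant subspaces of $L^2(X_0)$; it is globally $\beta$-invariant and $B\curvearrowright\mathcal Q$ is the maximal ``compact'' factor of $\beta$, of the form $B\curvearrowright\overline B/K$ for a compact group $\overline B$ (a completion of the image of $B$) and a closed subgroup $K$, with every finite-dimensional $B$-invariant subspace of $L^2(X_0)$ living inside $L^2(\overline B/K)$. If $\mathcal Q=\mathbb C$ then $\beta$ is weakly mixing and we take $S=B$, $Y=X_0$. Otherwise, the key point will be that the factor $\overline B/K$ is \emph{finite}; granting this, transitivity of the ergodic $B$-action on the finite set $\overline B/K$ makes its point-stabilizer $S<B$ of finite index, $\overline B/K\cong B/S$, and the $B$-equivariant quotient $q\colon X_0\to B/S$ has an $S$-invariant non-null fiber $Y=q^{-1}(\{eS\})$ with $\widetilde\mu(\beta(h)Y\cap Y)=0$ for $h\notin S$. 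Finally $\beta_{|S}$ is weakly mixing on $Y$: any finite-dimensional $S$-invariant subspace of $L^2(Y)$ would, after the finite-index induction from $S$ to $B$ (which preserves finite-dimensionality), enlarge $\mathcal Q$ beyond $L^\infty(\overline B/K)$, contradicting maximality.

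The main obstacle is precisely the finiteness of $\overline B/K$, i.e. ruling out \emph{infinite} compact factors of $\beta$ — the archetypal obstruction being the left-translation action of an infinite residually finite group on its profinite completion, which is free, ergodic, and has no weakly mixing piece for any finite-index subgroup. Here the hypotheses inherited from Theorem \ref{SOE} are indispensable. The orbit-containment produces a measurable cocycle $w\colon B\times X_0\to\widetilde D$ with $\beta(h)(x)=\widetilde\alpha(w(h,x))(x)$; I would argue that an infinite compact factor $\overline B/K$ forces every sequence $(h_m)$ in $B_0$ to be recurrent along $\widetilde\alpha$-orbits — after passing to a subsequence along which $\overline{h_m}$ converges in the compact group $\overline B$, the displacements $\beta(h_m)(x)$ would remain in a fixed bounded part of the $\widetilde\alpha$-orbit of $x$ on a set of definite measure — contradicting the assumed vanishing $\widetilde\mu(\{x : w(h_m,x)\in s(\mathrm{Stab}_D(i)\times\mathbb Z/n\mathbb Z)t\})\to 0$, and the analogue for $\mathrm C_D(g)$, valid for all $s,t\in\widetilde D$; the relative property (T) of $(B,B_0)$ enters to guarantee that the finite-dimensional part of the Koopman representation of $B$ is already detected on $B_0$, so that it suffices to test escape along sequences in $B_0$. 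Turning these into rigorous estimates, and tracking the passage between $\beta$ and its restrictions to finite-index subgroups when iterating the reduction, is the technical heart of the matter and is carried out by Popa in \cite{Po04}, whose argument I would follow.
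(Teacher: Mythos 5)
Your overall strategy lines up with the paper's: the paper proves the main assertion by citing \cite[Lemma 4.4]{CIOS1} and proves the partition statement by extracting from that lemma's proof a \emph{completely atomic} $\beta(B)$-invariant von Neumann subalgebra $\mathcal P\subset\mathrm L^\infty(X_0)$ and decomposing it into its minimal projections $p_i=\mathbf 1_{X_i}$. Your $\mathcal Q$, the subalgebra generated by all finite-dimensional $B$-invariant subspaces of $\mathrm L^2(X_0)$, is exactly this $\mathcal P$, and ``complete atomicity of $\mathcal P$'' is your ``finiteness of $\overline B/K$'' (over each ergodic piece, together with atomicity of the ergodic decomposition). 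Your Zorn's lemma reduction is a somewhat more roundabout way of arriving at the same decomposition that one gets simply by listing the minimal projections of $\mathcal P$, and your reduction to the ergodic case is not free: a priori the ergodic decomposition of $\beta$ could be continuous, and its atomicity is itself part of the atomicity of $\mathcal P$ that has to be established, not assumed.

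The genuine gap is in the heuristic you give for why $\overline B/K$ must be finite. You claim that if $\overline{h_m}$ converges in $\overline B$ then the displacements $\beta(h_m)(x)$ remain in a fixed bounded part of the $\widetilde\alpha$-orbit of $x$ on a set of definite measure. This does not follow. Convergence of $\overline{h_m}$ in $\overline B/K$ only controls the action of $h_m$ on the compact factor $\mathrm L^\infty(\overline B/K)\subset\mathrm L^\infty(X_0)$; it imposes no bound on the cocycle values $w(h_m,\cdot)\in\widetilde D$, which can wander arbitrarily far on the orthogonal complement of the compact factor. A toy picture: if $\beta$ were (isomorphic to) a product of a translation action on an infinite profinite group with a weakly mixing factor, one could have $\overline{h_m}\to 1$ in the profinite coordinate while the displacement in the other factor is unbounded, so no recurrence of $w(h_m,\cdot)$ in a fixed finite subset of $\widetilde D$ is forced. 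The correct mechanism, as in \cite{Po04} and \cite[Lemma 4.4]{CIOS1}, is the relative property (T) of $(B,B_0)$: it provides a spectral gap for the Koopman representation of $B_0$ on $\mathrm L^2(X_0)\ominus\mathrm L^2(\mathcal P)$, which, combined with the escape hypotheses on $\mathrm{Stab}_D(i)$ and $\mathrm C_D(g)$, prevents $\mathcal P$ from being diffuse. You do acknowledge that you would defer to Popa's argument, so the proposal is not wrong as a plan, but the informal mechanism you supply for the key step is not the one that makes the proof go through.
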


\begin{proof}
The main assertion is \cite[Lemma 4.4]{CIOS1}. 

For the moreover assertion, note that the proof of \cite[Lemma 4.4]{CIOS1} provides a completely atomic, $\beta(B)$-invariant  von Neumann subalgebra $\mathcal P\subset\text{L}^\infty(X_0)$ such that for every non-zero projection $p\in\mathcal P$, the subgroup $S=\{h\in B\mid\beta(h)(p)=p\}$ of $B$ has finite index in $B$ and the restriction of $\beta_{|S}$ to $\text{L}^\infty(X_0)p$ is weakly mixing.
Write $\mathcal P=\bigoplus_{i=1}^l\mathbb Cp_i$, where $(p_i)_{i=1}^l$ are non-zero projections. For every $i$, let $X_i\subset X_0$ be a measurable subset such that 
$p_i=\textbf{1}_{X_i}$  and put $S_i=\{h\in B\mid\beta(h)(p_i)=p_i\}$. 
Then the conclusion follows. 
\end{proof}

\subsection{A solidity result}\label{SOL}
Finally, we also recall the following solidity result established in \cite[Corollary 4.7]{CIOS1}.
\begin{cor}[\cite{CIOS1}]\label{solidity}
Let $G\in\mathcal W\mathcal R(A,B\curvearrowright I)$, where $A$ is an abelian group and
$B\curvearrowright I$ an action such that $\emph{Stab}_{B}(i)$ is amenable for every $i\in I$ and $\{i\in I\mid g\cdot i\not=i\}$ is infinite for every $g\in B\setminus\{1\}$.
Let $\Q\subset  p(\emph{L}(G)\overline{\otimes}\mathbb M_n(\mathbb C)) p$ be a diffuse von Neumann subalgebra, for some $n\in\mathbb N$, such that $\Q\prec^{\emph{s}}_{\emph{L}(G)\overline{\otimes}\mathbb M_n(\mathbb C)}\emph{L}(A)\overline{\otimes}\mathbb M_n(\mathbb C)$.

Then $\Q'\cap p(\emph{L}(G)\overline{\otimes}\mathbb M_n(\mathbb C))p$ is amenable.
\end{cor}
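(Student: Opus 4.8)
The plan is to exploit the group‑measure‑space structure of $\text{L}(G)$ together with the hypothesis that the point stabilizers of $B\curvearrowright I$ are amenable, so that the relative commutant of $\Q$ ends up confined inside a crossed product by an amenable subgroup, hence amenable. First I would set up the Cartan picture. Since $A^{(I)}$ is abelian and normal in $G$, the subalgebra $\mathcal A:=\text{L}(A^{(I)})\cong\text{L}^\infty(X)$, with $X:=\widehat A^{\,I}$, is a MASA of $\text{L}(G)$ normalized by every $u_g$; the resulting action of $B=G/A^{(I)}$ on $X$ is built over $B\curvearrowright I$ in the sense of Lemma \ref{bover}, and it is essentially free because, by hypothesis, every $g\in B\setminus\{1\}$ moves infinitely many $i\in I$. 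Thus $\mathcal A\subset\text{L}(G)$ is a Cartan subalgebra and $\text{L}(G)=\mathcal A\rtimes_{\sigma,c}B$ is a (possibly twisted) group‑measure‑space decomposition with $\sigma$ built over $B\curvearrowright I$. Writing $\M:=\text{L}(G)\,\overline{\otimes}\,\mathbb M_n(\mathbb C)$ and $\widetilde{\mathcal A}:=\text{L}(A)\,\overline{\otimes}\,\mathbb M_n(\mathbb C)=\mathcal A\,\overline{\otimes}\,\mathbb M_n(\mathbb C)$, the algebra $\widetilde{\mathcal A}$ is a ``virtual Cartan'' of $\M$: one has $\widetilde{\mathcal A}'\cap\M=\mathcal A$ (abelian), $\sN_\M(\widetilde{\mathcal A})''=\M$, and $\M=\widetilde{\mathcal A}\rtimes_{\sigma\otimes\mathrm{id},\,c}B$. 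The $2$‑cocycle $c$ will be immaterial: it twists only the product $u_gu_h=c_{g,h}u_{gh}$, not the relation $u_gdu_g^{*}=\sigma_g(d)$ used below, nor the amenability of crossed products.

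Next I would reduce to a convenient position. Amenability of $\mathcal N:=\Q'\cap p\M p$ is a unitary‑conjugation invariant, so using that $\widetilde{\mathcal A}$ is regular we may assume $p\in\widetilde{\mathcal A}$. Since $\Q\prec^{\mathrm s}_\M\widetilde{\mathcal A}$ and $\Q$ is diffuse, I would take a MASA $\D$ of $\Q$, pass to $\D'\cap p\M p\supseteq\Q\supseteq\mathcal N$, and apply Popa's intertwining (Theorem \ref{corner}) together with the conjugacy results for Cartan subalgebras recalled in Lemmas \ref{conj1}--\ref{conj2} (following \cite{Io10}); after conjugating by a unitary of $\M$ this places $\D$ inside $\mathcal A$, and one may arrange $\D$ to be of coordinate form, $\D=\text{L}^\infty(\widehat A^{\,J_0})$ for some nonempty $J_0\subseteq I$ (spread over infinitely many coordinates when $A$ is finite, over a single coordinate otherwise — either way $\D$ is diffuse).

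Then comes the main step, confining $\mathcal N$. For $x\in\mathcal N$ write the Fourier expansion $x=\sum_{g\in B}x_gu_g$ with $x_g=\mathrm E_{\widetilde{\mathcal A}}(xu_g^{*})\in\widetilde{\mathcal A}$. Commuting $x$ with $d\in\D$ and comparing coefficients gives $x_g\,\sigma_g(d)=d\,x_g$ for every $g\in B$ and every $d\in\D$. Because $\sigma$ is built over $B\curvearrowright I$, it permutes the coordinate MASAs $\text{L}^\infty(\widehat A^{\{i\}})$; expanding $x_g$ in the character basis of $\widehat A^{(I)}$ and running the standard transversality computation for generalized Bernoulli actions then forces $x_g=0$ unless $\sigma_g$ fixes $\D$ pointwise, i.e. unless $g\cdot i=i$ for all $i\in J_0$. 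Hence every element of $\mathcal N$ is supported on the subgroup $B_0:=\bigcap_{i\in J_0}\text{Stab}_B(i)$, so $\mathcal N\subseteq p\big(\widetilde{\mathcal A}\rtimes_{\sigma,c}B_0\big)p$. Fixing $i_0\in J_0$ we get $B_0\leqslant\text{Stab}_B(i_0)$, which is amenable by hypothesis, so $B_0$ is amenable; since $\widetilde{\mathcal A}=\text{L}^\infty(X)\,\overline{\otimes}\,\mathbb M_n(\mathbb C)$ is amenable, the (twisted) crossed product $\widetilde{\mathcal A}\rtimes_{\sigma,c}B_0$ is amenable, and therefore so is its subalgebra $\mathcal N=\Q'\cap p\M p$, as claimed.

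The step I expect to be the main obstacle is the second one together with the transversality estimate in the third: extracting from the mere hypothesis $\Q\prec^{\mathrm s}_\M\widetilde{\mathcal A}$ a diffuse abelian $\D\subseteq\Q$ that sits, up to unitary conjugacy, as a coordinate MASA inside $\mathcal A$ (which is where $\prec^{\mathrm s}$ — rather than plain $\prec$ — and the conjugacy machinery of \cite{Io10} are genuinely needed to avoid losing a corner), and then proving rigorously that $x_g\neq 0$ forces $g$ to fix $J_0$ pointwise, using the specific combinatorics of the built‑over action. Once these are in place, the passage to an amenable subgroup via the amenable‑stabilizer hypothesis and the final ``subalgebra of an amenable algebra is amenable'' argument are routine.
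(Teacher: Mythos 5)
Your overall plan — conjugate a MASA of $\mathcal Q$ into the base algebra and then run a Fourier-coefficient transversality argument to confine $\mathcal Q'\cap p\M p$ inside a twisted crossed product over an amenable stabilizer — is morally in the right direction, but the reduction you flagged as the ``main obstacle'' is a genuine gap, and there is a second, earlier one. First, Lemma~\ref{conj1} requires $\mathcal D'\cap p\M p\prec^{\rm s}_\M\widetilde{\mathcal A}$, and this is \emph{not} implied by $\mathcal Q\prec^{\rm s}_\M\widetilde{\mathcal A}$: since $\mathcal D$ is only a MASA of $\mathcal Q$, not of $p\M p$, the relative commutant $\mathcal D'\cap p\M p$ is in general strictly larger than $\mathcal Q$, and intertwining does not pass upward from $\mathcal Q$ to this larger algebra. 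So the conjugation of $\mathcal D$ into the Cartan is not licensed by the tools you invoke.

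Second, even granting a unitary placing $\mathcal D$ inside $\mathcal A=\mathrm L^\infty(\widehat A^{\,I})$, there is no reason $\mathcal D$ should take the coordinate form $\mathrm L^\infty(\widehat A^{\,J_0})$: a diffuse abelian subalgebra of a product $\mathrm L^\infty$-space can sit ``diagonally'' across infinitely many coordinates. The transversality step is then lost. Since $\mathcal A$ is abelian and both $d$ and $\sigma_g(d)$ lie in $\mathcal A\otimes 1$, which commutes with $x_g$, your commutation relation collapses to $x_g\bigl(\sigma_g(d)-d\bigr)=0$, which only confines the support of $x_g$ to the coincidence set $\{\omega\in X : d(\omega)=\sigma_g(d)(\omega)\ \text{for all}\ d\in\mathcal D\}$; for a non-coordinate $\mathcal D$ this set can be non-null even for $g$ moving many indices, so you cannot conclude $x_g=0$. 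The paper does not reprove the statement here — it is cited from \cite[Corollary 4.7]{CIOS1} — and the argument there does not go through locating a MASA of $\mathcal Q$ in coordinate position; it works with the built-over structure of the action and the hypothesis $\mathcal Q\prec^{\rm s}_\M\widetilde{\mathcal A}$ more directly, à la the Popa--Vaes analysis of Bernoulli-type actions, to derive (relative) amenability of the normalizer, avoiding the coordinate-location problem entirely.
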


 \section{Rigidity for second cohomology groups}
The goal of this section is to establish the following theorem, which strengthens a result of Jiang \cite{Ji15}, and will be needed in the proof of Theorem \ref{rigembed}.

 We start by recalling some terminology.
Let  $G\curvearrowright^{\sigma} (X,\mu)$ be a p.m.p. action.
 A map $c: G\times G\rightarrow\sU(\text{L}^{\infty}(X))$ is a {\it $2$-cocycle for $\sigma$} if it satisfies  $c_{g,h}c_{gh,k}=\sigma_g(c_{h,k})c_{g,hk}$, for every $g,h,k\in G$, and a {\it $2$-coboundary for $\sigma$} if there is a map $b: G\rightarrow\sU(\text{L}^{\infty}(X))$ such that $c_{g,h}=b_{gh}^*b_g\sigma_g(b_h)$, for every $g,h\in G$. If $n\in\mathbb N$, then $G\curvearrowright^{\sigma^{\otimes_n}}(X,\mu)^n$ denotes the direct product of $n$ copies of $\sigma$.
 

For a compact Polish abelian group $X$ endowed with its Haar measure $\mu$, we view its dual $\widehat{X}$ as a subgroup of  $\sU(\text{L}^{\infty}(X))$ and  identify  L$^{\infty}(X)=$ L$(\widehat{X})$, via the Fourier transform. 

\begin{thm}[\cite{Ji15}]\label{jiang} 
Let $ G$ be a countable group and $ G\curvearrowright^{\sigma} (X,\mu)$ be a p.m.p. action by continuous automorphisms on a compact Polish abelian group $X$ endowed with its Haar measure $\mu$. Assume that $\sigma$ is weakly mixing and the actions $\sigma^{\otimes_2}$ and $\sigma^{\otimes_4}$ are $\{\mathbb T\}$-cocycle superrigid. Let $c: G\times G\rightarrow\sU(\emph{L}^{\infty}(X))$ be a $2$-cocycle for $\sigma$ such that $c_{g,h}\in\widehat{X}$, for every $g,h\in G$. Assume there exist a  map $w: G\rightarrow\sU(\emph{L}^{\infty}(X))$ and a scalar $2$-cocycle $d: G\times G\rightarrow\mathbb T$ such that $c_{g,h}=d_{g,h}w_{gh}^*w_g\sigma_g(w_h)$, for every $g,h\in G$.

Then there are maps
 $c: G\rightarrow\widehat{X}$, $e: G\rightarrow\mathbb T$ and $z\in\sU(\emph{L}^{\infty}(X))$ such that we have
$c_{g,h}=c_{gh}^*c_g\sigma_g(c_h)$, $d_{g,h}={e_{gh}}\overline{e_g}\overline{e_h}$ and $w_g=e_gc_gz^*\sigma_g(z)$, for every $g,h\in G$. \end{thm}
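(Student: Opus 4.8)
The plan is to prove Theorem \ref{jiang} by combining the cocycle superrigidity hypotheses with Jiang's method of reducing a $\widehat{X}$-valued $2$-cocycle cohomology problem to a pair of $1$-cocycle superrigidity applications on the doubled and quadrupled actions. The key observation is that the hypothesis $c_{g,h}=d_{g,h}w_{gh}^*w_g\sigma_g(w_h)$ says precisely that the $2$-cocycle $c$ (valued in $\widehat X$) and the scalar $2$-cocycle $d$ differ by the $2$-coboundary associated to $w$; so the whole content is to show that $w$ itself can be ``split'' into a scalar part $e_g$, a $\widehat X$-valued part $c_g$, and an honest coboundary $z^*\sigma_g(z)$. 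First I would follow Jiang \cite{Ji15} in forming, from $w$, a $1$-cocycle on the product action $\sigma^{\otimes_2}$: the map $(g,(x_1,x_2))\mapsto w_g(x_1)\overline{w_g(x_2)}$ is a well-defined $\mathbb T$-valued measurable function (the cocycle relation for $w$ only fails by the scalar $d_{g,h}$, which cancels in the ratio), hence a genuine $\mathbb T$-valued $1$-cocycle for $\sigma^{\otimes_2}$.

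By the $\{\mathbb T\}$-cocycle superrigidity of $\sigma^{\otimes_2}$, this $1$-cocycle is cohomologous to a homomorphism $\chi\colon G\to\mathbb T$; unwinding the coboundary gives a measurable $\psi\colon X^2\to\mathbb T$ with $w_g(x_1)\overline{w_g(x_2)} = \chi(g)\,\psi(\sigma_g(x_1),\sigma_g(x_2))\overline{\psi(x_1,x_2)}$. The next step is a standard but delicate separation-of-variables argument: one uses weak mixing of $\sigma$ (so that $\sigma^{\otimes_2}$ is ergodic, in fact $\sigma^{\otimes_2}$ being ergodic is what is actually used) together with Fubini-type manipulations and the fact that $X$ is a compact abelian group with $\widehat X$ spanning $\text{L}^\infty(X)$, to deduce that $\psi$ and hence $w$ has the desired structured form. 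Concretely, one wants to conclude $w_g = e_g\, c_g\, z^*\sigma_g(z)$ with $e_g\in\mathbb T$, $c_g\in\widehat X$, $z\in\sU(\text{L}^\infty(X))$; the fourth-power hypothesis $\sigma^{\otimes_4}$ being $\{\mathbb T\}$-cocycle superrigid enters to control a secondary cocycle (built from $\psi$ on $X^4$) that governs the consistency needed to extract the genuine character $c_g\in\widehat X$ as opposed to merely a unitary in $\text{L}^\infty(X)$ — this is the point where one must rule out an obstruction living in a ``twisted'' first cohomology. Once $w$ is written in this form, substituting back into $c_{g,h}=d_{g,h}w_{gh}^*w_g\sigma_g(w_h)$ and separating the scalar and $\widehat X$-valued parts yields $c_{g,h}=c_{gh}^*c_g\sigma_g(c_h)$ and $d_{g,h}=e_{gh}\overline{e_g}\,\overline{e_h}$ simultaneously, which is exactly the conclusion.

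I expect the main obstacle to be the second step — extracting from the cohomologous-to-a-homomorphism statement on $X^2$ (and the auxiliary statement on $X^4$) the precise structured decomposition $w_g = e_g c_g z^*\sigma_g(z)$ with $c_g$ an \emph{honest character}. The difficulty is that cocycle superrigidity on the product action gives a coboundary by a measurable function $\psi$ on $X^2$, and translating ``$\psi$ depends on its two coordinates in a product way'' into ``$w$ is a character times a coboundary'' requires carefully using the group structure of $X$, the density of $\widehat X$ in $\text{L}^2(X)$, and an ergodicity/weak-mixing input at each stage; the role of $\sigma^{\otimes_4}$ (rather than just $\sigma^{\otimes_2}$) is exactly to close the gap in this argument, and getting that bookkeeping right is the technical heart. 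The strengthening over Jiang's original result \cite{Ji15} presumably lies in weakening his hypotheses (e.g. he may have assumed all finite powers, or a malleability/s-malleability condition, rather than just superrigidity of $\sigma^{\otimes_2}$ and $\sigma^{\otimes_4}$), so I would also take care to verify that every invocation of superrigidity in Jiang's argument really only needs the second and fourth powers, replacing any use of higher powers by a reduction to these two.
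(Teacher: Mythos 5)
There is a concrete error at the very first step of your argument, and it matters. You claim that the cocycle relation for $w$ ``only fails by the scalar $d_{g,h}$, which cancels in the ratio'' $w_g(x_1)\overline{w_g(x_2)}$. But the hypothesis is $w_{gh}^*w_g\sigma_g(w_h)=\overline{d_{g,h}}\,c_{g,h}$ with $c_{g,h}\in\widehat{X}$ a generally nonconstant character of $X$, not a scalar. Computing the cocycle defect of $\tilde w_g(x_1,x_2):=w_g(x_1)\overline{w_g(x_2)}$ for $\sigma^{\otimes_2}$ gives $c_{g,h}(x_1)\overline{c_{g,h}(x_2)}=c_{g,h}(x_1x_2^{-1})$, which is a nontrivial element of $\sU(\mathrm{L}^\infty(X^2))$ whenever $c_{g,h}\neq 1$; so $\tilde w$ is not a $1$-cocycle, not even projectively. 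The object that works is the \emph{multiplicative} defect $\xi_g=\Delta(w_g)^*(w_g\otimes w_g)$, i.e.\ $(x_1,x_2)\mapsto \overline{w_g(x_1x_2)}\,w_g(x_1)w_g(x_2)$, where $\Delta$ is the comultiplication on $\mathrm{L}(\widehat X)$: there the character part cancels precisely because $\Delta(c_{g,h})=c_{g,h}\otimes c_{g,h}$, leaving $\xi_{gh}=d_{g,h}\xi_g\sigma_g^{\otimes_2}(\xi_h)$. Even then $\xi$ is only a cocycle up to the scalar multiplier $d$, and $\{\mathbb T\}$-cocycle superrigidity does not apply to projective cocycles directly; the role of $\sigma^{\otimes_4}$ is exactly to untwist $d$ first (one passes to $X^4$ to show $d$ is a scalar coboundary, as in \cite[Lemma 2.9]{Ji15}), after which $d_g\xi_g$ is an honest cocycle on $X^2$ and $\sigma^{\otimes_2}$-superrigidity applies. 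Your account of where $\sigma^{\otimes_4}$ enters is therefore also misplaced.

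Beyond this, the ``separation of variables'' step that you defer as the technical heart is genuinely the content of the theorem and is not supplied. After superrigidity one only gets $\Delta(w_g)=\overline{e_g}(w_g\otimes w_g)\,u^*\sigma_g^{\otimes_2}(u)$ for some unitary $u\in\sU(\mathrm{L}^\infty(X^2))$; to conclude one must show that $u=\Delta(z)(z\otimes z)^*$ for a single unitary $z\in\sU(\mathrm{L}^\infty(X))$, and then that a unitary $v$ satisfying $\Delta(v)=\lambda\, v\otimes v$ is a scalar multiple of a character. The paper does this by checking, via weak mixing, that $u$ is symmetric under the flip and satisfies the coassociativity relation up to scalars, and then invoking the group-like/height results \cite[Theorem 3.3 and Lemma 7.1]{IPV10}. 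Nothing in your sketch substitutes for these inputs, so as written the proposal does not yield a proof.
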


Theorem \ref{jiang} improves \cite[Theorem 1.1]{Ji15}, where the same statement was proved under the additional assumption that the second cohomology group $\text{H}^2( G,\widehat{X})$
 has no  elements of order $2$ (see \cite[Remark 1.2]{Ji15}).
 This improvement is needed in the proof of Theorem \ref{rigembed} where we apply Theorem \ref{jiang} to
 $X=\widehat{A}^I$, where $A$ is a countable abelian group and $\sigma$ is an action built over $G\curvearrowright I$.  Indeed, if $A$ is finite, then $\text{H}^2( G,\widehat{X})=\text{H}^2( G,A^I)$ is a torsion group that may have elements of order $2$ and so \cite[Theorem 1.1]{Ji15} does not apply.

\begin{proof}
Let $\Delta:\text{L}(\widehat{X})\rightarrow \text{L}(\widehat{X})\overline{\otimes}\text{L}(\widehat{X})$ be the $*$-homomorphism given by $\Delta(u_a)=u_a\otimes u_a$, for $a\in\widehat{X}$ \cite{PV09}. Since $\sigma^{\otimes_2}_g\circ\Delta=\Delta\circ\sigma_g$ and $\Delta(c_{g,h})=c_{g,h}\otimes c_{g,h}$ we derive that $\xi_g:=\Delta(w_g)^*(w_g\otimes w_g)\in\sU(\text{L}(\widehat{X})\overline{\otimes}\text{L}(\widehat{X}))$ satisfies $\xi_{gh}=d_{g,h}\xi_g\sigma_g^{\otimes_2}(\xi_h)$, for every $g,h\in G$. Since $\sigma^{\otimes_4}$ is $\{\mathbb T\}$-cocycle superrigid, \cite[Lemma 2.9]{Ji15} implies that $d$ is a $2$-coboundary, so there is $d: G\rightarrow\mathbb T$ such that $d_{g,h}=\overline{d_{gh}}{d_g}{d_h}$, for every $g,h\in G$. Thus, $(d_g\xi_g)_{g\in G}$ is a $\mathbb T$-valued $1$-cocycle for $\sigma^{\otimes_2}$. Since $\sigma^{\otimes_2}$ is $\{\mathbb T\}$-cocycle superrigid, we get a character $\eta: G\rightarrow\mathbb T$ and $u\in\sU(\text{L}(\widehat{X})\overline{\otimes}\text{L}(\widehat{X}))$ such that $d_g\xi_g={\eta}_gu\sigma_g^{\otimes_2}(u)^*$. So, if $e_g:=\overline{d_g}\eta_g\in\mathbb T$, then
\begin{equation}\label{comult}
\text{$\Delta(w_g)=\overline{e_g}(w_g\otimes w_g)u^*\sigma_g^{\otimes_2}(u)$, for every $g\in G$.}
\end{equation}
Let $\zeta$ be the flip automorphism of $\text{L}(\widehat{X})\overline{\otimes}\text{L}(\widehat{X})$. Since $\zeta\circ\Delta=\Delta$ and $\zeta\circ\sigma_g^{\otimes_2}=\sigma_g^{\otimes_2}\circ\zeta$, \eqref{comult} implies that $\sigma_g^{\otimes_2}(u^*\zeta(u))=u^*\zeta(u))$, for every $g\in G$. Since $\sigma$ is weakly mixing, we get that $u^*\zeta(u)=\eta$, for some $\eta\in\mathbb T$. Since $(\Delta\otimes\text{Id})\circ\Delta=(\text{Id}\otimes\Delta)\circ\Delta$ and $\sigma^{\otimes_2}_g\circ\Delta=\Delta\circ\sigma_g$,  \eqref{comult} also implies that $U=(\Delta\otimes{\text{Id}})(u)(u\otimes 1)(1\otimes u)^*(\text{Id}\otimes\Delta)(u)^*$ satisfies $\sigma_g^{\otimes_3}(U)=U$, for every $g\in G$. Since $\sigma$ is weakly mixing, we also get that $U=\eta'$, for some $\eta'\in\mathbb T$.

By applying \cite[Theorem 3.3]{IPV10} we get that $\eta=\eta'=1$ and  $u=\Delta(z)(z\otimes z)^*$, for some $z\in \sU(\text{L}(\widehat{X}))$.
If $g\in G$, then \eqref{comult} gives that $\Delta(w_gz\sigma_g(z)^*)=\overline{e_g}(w_gz\sigma_g(z)^*)\otimes (w_gz\sigma_g(z)^*)$, which by \cite[Lemma 7.1]{IPV10} gives that $c_g:=\overline{e_g}w_gz\sigma_g(z)^*\in\widehat{X}$.
As $\eta$ is a character, we get that $e_{gh}\overline{e_g}\overline{e_h}=\overline{d_{gh}}d_gd_h=d_{g,h}$ and $c_{gh}^*c_g\sigma_g(c_h)=e_{gh}\overline{e_g}\overline{e_h}w_{gh}^*w_g\sigma_g(w_h)=c_{g,h}$, for $g,h\in G$.
\end{proof}

\section{Rigidity for embeddings of wreath-like product II$_1$ factors}\label{embeddings}

This section is devoted to proving the main result of the paper, Theorem \ref{main}. In fact, we will establish the following strengthening of Theorem \ref{main}.

\begin{thm}\label{rigembed}
Let $G$ be a countable ICC group with property (T) which has a normal abelian group $A$ such that the set $\{aga^{-1}\mid a\in A\}$ is infinite, for every $g\in G\setminus A$.

Let $H\in\mathcal C_0$. Specifically, assume that $H\in\mathcal W\mathcal R(C,D\curvearrowright I)$, where $C$ is a nontrivial abelian group, $D$ is a nontrivial ICC subgroup of a hyperbolic group and $D\curvearrowright I$ is an  action 
such that $\emph{C}_{ D}(g)$ is virtually cyclic and $\emph{Stab}_D(i)$ is amenable for every $g\in D\setminus\{1\}$ and  $i\in I$.

Let $\theta:\emph{L}(G)\rightarrow \emph{L}(H)^t$ be a unital $*$-homomorphism, for some $t>0$. Denote by $(u_g)_{g\in G}$ and $(v_h)_{h\in H}$ the canonical generating unitaries of $\emph{L}(G)$ and $\emph{L}(H)$, respectively.

Then $t\in\mathbb N$ and there are $t_1,\ldots, t_q\in\mathbb N$ with $t_1+\cdots+t_q=t$, for some $q\in\mathbb N$, a finite index subgroup $K<G$ with $A<K$, an injective homomorphism $\delta_i:K\rightarrow H$
such that $\delta_i(A)\subset C^{(I)}$, and a unitary representation $\rho_i:K\rightarrow\sU_{t_i}(\mathbb C)$, for every $1\leq i\leq q$, and a unitary $w\in \emph{L}(H)^t=\emph{L}(H)\overline{\otimes}\mathbb M_t(\mathbb C)$ such that 
$$\text{$w\theta(u_g)w^*=\emph{diag}(v_{\delta_1(g)}\otimes\rho_1(g),
\ldots, v_{\delta_q(g)}\otimes\rho_q(g))
$, for every $g\in K$.}$$

Moreover, when $t=1$, one can find an injective homomorphism $\delta:G\rightarrow H$, a character $\rho:G\rightarrow\mathbb T$ and $w\in\sU(\emph{L}(H))$ so that $\delta(A)\subset C^{(I)}$ and $w\theta(u_g)w^*=\rho(g)v_{\delta(g)}$, for all $g\in G$.

\end{thm}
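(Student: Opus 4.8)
The plan is to deduce Theorem \ref{rigembed} by combining Popa's deformation/rigidity machinery with the structural facts on wreath-like products recorded in Section \ref{prelimvN}. Write $\M = \text{L}(H) = \text{L}(C^{(I)})\rtimes D = \A\rtimes D$ where $\A = \text{L}(C^{(I)})$ is the canonical Cartan-type abelian subalgebra (a Cartan subalgebra of $\M$ since $D\curvearrowright C^{(I)}$ is free by faithfulness of $D\curvearrowright I$). The starting point is the image $\theta(\text{L}(A))\subset \M^t$. Since $A$ is abelian and the relative commutant of $\theta(\text{L}(A))$ contains $\theta(\text{L}(G))$ which has the relative property (T) (as $G$ has property (T)), one first wants to locate $\theta(\text{L}(A))$ inside a corner of $\A^t$. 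The key tool is Theorem \ref{relativeT} applied with $\delta$ the trivial homomorphism $H\to\{1\}$ so that $\ker\delta = H$, or more precisely a variant identifying the group von Neumann algebra: take $\P = \theta(\text{L}(A))$, note it is amenable (abelian), and $\R = \theta(\text{L}(G))\subset \sN_{\M^t}(\P)''$ has the relative property (T); the normalizer statement holds because $A\lhd G$ so $\text{L}(G)$ normalizes $\text{L}(A)$. From the hypothesis that $\{aga^{-1}\mid a\in A\}$ is infinite for all $g\in G\setminus A$ one gets that the action of $G$ on $\text{L}(A)$ has no nontrivial finite orbits, hence a malnormality/mixing-type property forcing $\R\nprec_\M \text{L}(A)\rtimes(\text{something smaller})$; combined with Corollary \ref{solidity} and Theorem \ref{relativeT} this should yield $\theta(\text{L}(A))\prec^{\text{s}}_{\M^t}\A^t$.

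Once $\theta(\text{L}(A))$ is intertwined into the Cartan subalgebra $\A^t$, one passes to the relative commutant: let $\C = \theta(\text{L}(A))'\cap\M^t$, which contains $\theta(\text{L}(G))$. Using Corollary \ref{solidity} one controls the structure of $\C$, and using Lemma \ref{conj1} (applied after possibly cutting down) one finds a unitary conjugating $\theta(\text{L}(A))$ to sit inside $\A^t$ and inside $\C$. Then Lemma \ref{conj2} lets us replace the action of $G$ (via $\text{Ad}(\theta(u_g))$) by a cohomologous action under which $\A^t$ itself is invariant and the restriction to $\A^t$ is free. At this stage we have an embedding of $G$-systems: $G$ acts on (a corner of) $\A^t$ preserving the Cartan subalgebra, i.e. we get an embedding at the level of orbit equivalence relations. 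This is exactly the input for the strong rigidity theorem for orbit equivalence embeddings, Theorem \ref{SOE}: after replacing $G$ by a finite index subgroup $K$ and decomposing into weakly mixing ergodic components via Lemma \ref{wmix} (this is where the integers $t_1,\dots,t_q$ summing to $t$ and the numbers $q$ come from — the components of the partition), and verifying the two limit/vanishing conditions in Theorem \ref{SOE} using that $\text{Stab}_D(i)$ is amenable and $\text{C}_D(g)$ is virtually cyclic (hence "small" relative to the property (T) group, so cosets are transient), we obtain on each component an injective homomorphism $\delta_i\colon K\to H$ with $\delta_i(A)\subset C^{(I)}$, together with $\theta\circ\beta$ conjugate to $\alpha\circ\delta_i$.

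The final step is to upgrade the orbit equivalence embedding to a genuine von Neumann algebra statement with the explicit formula. Having matched orbits, the discrepancy between $\theta(u_g)$ and $v_{\delta_i(g)}$ is governed by a $1$-cocycle with values in the unitary group of a corner of $\A^t$, i.e. in $\sU_{t_i}(\mathbb C)$ after accounting for the matrix amplification — more precisely a cocycle into $\sU_{\text{fin}}$. Here we invoke the cocycle superrigidity theorem, Theorem \ref{CS}, applied to the action of $G$ (restricted to $K$) built over $G\curvearrowright I$ via Lemma \ref{bover}: the cocycle is cohomologous to a homomorphism $K\to\sU_{t_i}(\mathbb C)$, which is precisely the representation $\rho_i$. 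Untwisting the cocycle by the corresponding unitary, absorbed into $w$, gives $w\theta(u_g)w^* = \text{diag}(v_{\delta_1(g)}\otimes\rho_1(g),\dots,v_{\delta_q(g)}\otimes\rho_q(g))$ on $K$. For the "moreover" with $t=1$: then $q=1$, $K=G$ (since $\theta$ is unital and $\M$ is a factor, ergodicity forces the ergodic component to be everything, so the finite index subgroup is all of $G$), $t_1=1$ so $\rho_1 = \rho\colon G\to\mathbb T$ is a character, and the formula reads $w\theta(u_g)w^* = \rho(g)v_{\delta(g)}$ with $\delta(A)\subset C^{(I)}$.

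The main obstacle I expect is the very first step: establishing the intertwining $\theta(\text{L}(A))\prec^{\text{s}}_{\M^t}\A^t$, i.e. locating the abelian subalgebra $\theta(\text{L}(A))$ inside the Cartan subalgebra of $\text{L}(H)$. Theorem \ref{relativeT} gives intertwining into $\A\rtimes\ker\delta$ for a homomorphism $\delta\colon H\to(\text{non-elementary hyperbolic})$, not directly into $\A$ itself; one must engineer the right target group — presumably use that $D$ embeds in a hyperbolic group and run Theorem \ref{relativeT} so that $\ker\delta$ captures exactly the kernel $C^{(I)}$, or iterate to peel off successive quotients. One then has to rule out partial intertwining into $\A\rtimes(\text{proper amenable-by-something})$ pieces using the solidity Corollary \ref{solidity} together with the hypothesis on infinite $A$-conjugacy classes in $G$, and finally promote $\prec$ to $\prec^{\text{s}}$ by checking the condition on every nonzero projection in the relative commutant — this last point is delicate because $\theta$ need not be surjective so the relative commutant of $\theta(\text{L}(A))$ is larger than in the isomorphism case treated in \cite{CIOS1,CIOS3}, and controlling it is exactly what makes the embedding (as opposed to isomorphism) setting harder.
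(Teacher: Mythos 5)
Your skeleton for the first half is essentially the paper's: intertwine $\theta(\text{L}(A))$ into the canonical Cartan subalgebra via Theorem \ref{relativeT} and Corollary \ref{solidity}, conjugate with Lemmas \ref{conj1} and \ref{conj2}, decompose into weakly mixing pieces with Lemma \ref{wmix}, and apply Theorem \ref{SOE} on each piece. One correction there: the step you single out as "the main obstacle" is in fact the routine one. Theorem \ref{relativeT} is applied with $\delta=\chi\colon H\to D$ the quotient homomorphism (not the trivial homomorphism $H\to\{1\}$, which would give $\ker\delta=H$ and say nothing), so that $\ker\chi=C^{(I)}$ and the target is exactly $\text{L}(C^{(I)})\overline{\otimes}\mathbb D_n(\mathbb C)$; no iteration or engineering is needed, and one applies it twice (once to $\theta(\text{L}(A))$ and once, after invoking solidity, to its full relative commutant $\sR$) to set up Lemma \ref{conj1}.

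The genuine gap is in your endgame. After Theorem \ref{SOE} the discrepancy between $\theta(u_{\widehat h})$ and $v_{\widehat{\varepsilon(h)}}$ is \emph{not} a $1$-cocycle with values in $\sU_{t_i}(\mathbb C)$, and Theorem \ref{CS} cannot be applied to it directly. One must first prove that the relative commutant $\theta(\text{L}(A))'\cap\text{L}(H)^t$ (suitably cut and conjugated) equals $\text{L}(C^{(I)})\overline{\otimes}\T$ for a finite-dimensional subalgebra $\T\subset\mathbb M_k(\mathbb C)$ — this uses a mixing argument based on $\text{C}_D(g)$ being virtually cyclic together with the type I structure coming from $\sR_1\prec^{\rm s}\Q_1$ — and one must group the ergodic components according to conjugacy of the homomorphisms $\varepsilon_i\colon S\to D$ (this, not the raw partition from Lemma \ref{wmix}, is where the block sizes $t_i$ come from). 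The discrepancy unitaries $\eta_h$ then live in $\sU(\Q\overline{\otimes}\T)$ and satisfy the cocycle identity only modulo the center $\sU(\sZ(\sR_1))$, i.e., they define a cocycle into the quotient $\sU(\T)/\sU(\sZ(\T))$; untwisting that still leaves a genuine $2$-cocycle with values in $C^{(I)}\times\sU(\sZ(\T))$ (coming from lifting $\varepsilon$ along $H\to D$ and from the projective nature of $\xi$), and trivializing it is exactly what Theorem \ref{jiang} — proved in the paper precisely for this purpose, since Jiang's original result excludes the $2$-torsion present when $C$ is finite — is for. Without these steps you obtain neither the representations $\rho_i$ nor homomorphisms $\delta_i$ into $H$ (as opposed to $D$). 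Finally, for the moreover part your claim that $t=1$ forces $K=G$ is not justified as stated; the paper instead extends $\delta$ and $\rho$ from a normal finite-index $K$ to all of $G$ by expanding $\theta(u_g)$ in Fourier series and using that $\{\delta(k)h\delta(k)^{-1}\mid k\in K\}$ is infinite for $h\neq 1$ to force the support to be a single group element.
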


\begin{proof} Let $B=G/A$. Let $\pi:G\rightarrow B$ and $\chi:H\rightarrow D$ be the quotient homomorphisms. For $g\in B$ and $h\in D$, fix $\widehat{g}\in G$ and $\widehat{h}\in H$ such that $\pi(\widehat{g})=g$ and $\chi(\widehat{h})=h$.
Denote $\M=\text{L}(G)$, $\P=\text{L}(A)$, $\Nn=\text{L}(H)$ and $\Q=\text{L}(C^{(I)})$. Let $n$ be the smallest integer such that $n\geq t$. Denote $\sN=\Nn\overline{\otimes}\mathbb M_n(\mathbb C)$ and $\widetilde{\mathcal Q}=\Q\overline{\otimes}\mathbb D_n(\mathbb C)$.
Let $p\in\widetilde{\mathcal Q}$ be a projection such that $(\tau\otimes\text{Tr})(p)=t$ and identify $\Nn^t=p\sN p$. For $1\leq i\leq n$, let $e_i=\textbf{1}_{\{i\}}\in\mathbb D_n(\mathbb C)$.
Let $$\sR:=\theta(\P)'\cap p\sN p.$$ 
Since $D$ is an ICC subgroup of a hyperbolic group, $\theta(\M)\subset\sN_{p\sN p}(\theta(\P))''$ has the relative property (T) and $\theta(\M)\nprec_\sN\widetilde{\mathcal Q}$, Theorem \ref{relativeT} implies that 
\begin{equation}\label{theta(P)}
    \theta(\P)\prec_{\sN}^{\text{s}}\widetilde{\mathcal Q}.
\end{equation}
As $D$ is a nontrivial ICC subgroup of a hyperbolic group, it is acylindrically hyperbolic. Since $\text{Stab}_D(i)$ is amenable for every $i\in I$, \cite[Lemma 4.11(a)]{CIOS1} gives $\{i\in I\mid g\cdot i\not=i\}$ is infinite for every $g\in D\setminus\{1\}$. 
By  Corollary \ref{solidity} 
we get that $\sR$ is amenable.
Since $\theta(\M)\subset\sN_{p\sN p}(\sR)''$ has the relative property (T),  applying again Theorem \ref{relativeT} gives that
\begin{equation}\label{theta(P)}
\sR\prec_{\sN}^{\text{s}}\widetilde{\mathcal Q}.
\end{equation}

Since $\widetilde{\mathcal Q}\subset \sN$ is a Cartan subalgebra, by combining \eqref{theta(P)} with Lemma \ref{conj1}, we get that after replacing $\theta$ with $\text{Ad}(u)\circ\theta$, for some $u\in\sU(p\sN p)$, we may assume that 
\begin{equation}\label{theta(PP)}
\theta(\P)\subset\widetilde{\mathcal Q} p\subset\sR.
\end{equation}

If $g\in B$, then $\theta(u_{\widehat{g}})$ normalizes $\theta(\P)$ and thus $\sR$. Denote $\sigma_g=\text{Ad}(\theta(u_{\widehat{g}}))\in\text{Aut}(\sR)$. Then $\sigma=(\sigma_g)_{g\in B}$ defines an action of $B$ on $\sR$ which leaves $\theta(\P)$ invariant.
The restriction of $\sigma$ to $\theta(\P)$ is free since it is isomorphic to the conjugation action of $ B$ on $\text{L}(A)$, which is free by the hypothesis condition. Thus,
Lemma \ref{conj2} yields an action $\beta=(\beta_g)_{g\in B}$ of $B$ on  $\sR$ satisfying 
\begin{enumerate}
\item for every $g\in B$ we have $\beta_g=\sigma_g\circ\text{Ad}(\omega_g)=\text{Ad}(\theta(u_{\widehat{g}})\omega_g)$, for some $\omega_g\in\sU(\sR)$, and
\item $\widetilde{\mathcal Q} p$ is $\beta( B)$-invariant and the restriction of $\beta$ to $\widetilde{\mathcal Q} p$ is free.
\end{enumerate}

Our next goal is to apply Theorem \ref{SOE}.
Consider the action $\alpha=(\alpha_h)_{h\in D}$ of $D$ on $\Q$ given by $\alpha_h=\text{Ad}(v_{\widehat{h}})$ for $h\in D$.
Let $(Y,\nu)$ be the dual of $C$ with its Haar measure. Let $(X,\mu)=(Y^{I},\nu^{I})$ and $(\widetilde X,\widetilde\mu)=(X\times\mathbb Z/n\mathbb Z,\mu\times c)$, where $c$ denotes the counting measure on $\mathbb Z/n\mathbb Z$.  Identify $\Q=\text{L}^{\infty}(X)$ and $\widetilde{\mathcal Q}=\text{L}^{\infty}(\widetilde X)$.
Denote still by $\alpha$ the corresponding measure preserving action $D\curvearrowright^{\alpha} (X,\mu)$ and let  $D\times\mathbb Z/n\mathbb Z\curvearrowright^{\widetilde\alpha}(\widetilde X,\widetilde\mu)$ be the action given by $(g,a)\cdot (x,b)=(g\cdot x,a+b)$. Let $X_0\subset\widetilde X$ be a measurable set such that $p=\textbf{ 1}_{X_0}$.
Since $\widetilde{\mathcal Q} p=\text{L}^{\infty}(X_0)$ is $\beta(B)$-invariant, we get a measure preserving action $B\curvearrowright^\beta (X_0,\widetilde\mu_{|X_0})$.

Note that $\widetilde{\Q}\subset \sN$ is a Cartan subalgebra and the p.m.p. equivalence relation associated to the inclusion $\widetilde{\Q}\subset \sN$ \cite{FM77}
is equal to $\sR(D\times\mathbb Z/n\mathbb Z\curvearrowright^{\widetilde\alpha}\widetilde X)$.
Since the restriction of $\beta$ to $\widetilde{\mathcal Q} p$ is implemented by unitaries in $p\sN p$, we deduce that
\begin{equation}
\text{$\beta( B)\cdot x\subset\widetilde\alpha(D\times\mathbb Z/n\mathbb  Z)\cdot x$, for almost every $x\in X_0$.}
\end{equation}

Since $B$ has property (T) and $\alpha$ is built over $D\curvearrowright I$ as a consequence of Lemma \ref{bover}, applying Lemma \ref{wmix}, we find a partition $X_0=\sqcup_{i=1}^lX_i$ into non-null measurable sets, for some  $l\in\mathbb N\cup\{\infty\}$,  and a finite index subgroup $ S_i< B$ such that $X_i$ is $\beta( S_i)$-invariant and
 the restriction of $\beta_{| S_i}$ to $X_i$ is weakly mixing, for all $i$.

Let $1\leq i\leq l$. If $j\in I$ and $g\in D\setminus\{1\}$, then $\text{Stab}_D(j)$ is amenable and $\text{C}_{D}(g)$ is virtually cyclic. Since the action $\beta$ is free and $ S_i$ has property (T),  we can find a sequence $(h_m)\subset S_i$  such that we have $\widetilde\mu(\{x\in X_0\mid \beta_{h_m}(x)\in \widetilde\alpha(s(\text{Stab}_D(j)\times\mathbb Z/n\mathbb Z)t)(x)\})\rightarrow 0$ and
$\widetilde\mu(\{x\in X_0\mid \beta_{h_m}(x)\in \widetilde\alpha(s(\text{C}_D(g)\times\mathbb Z/n\mathbb Z)t)(x)\})\rightarrow 0$, as $m\rightarrow\infty$, for every $s,t\in D\times\mathbb Z/n\mathbb Z$.

  By Theorem \ref{SOE} we can find an injective group homomorphism $\varepsilon_i: S_i\rightarrow D$ and $\varphi_i\in [\sR(D\times\mathbb Z/n\mathbb Z\curvearrowright^{\widetilde\alpha}\widetilde X)]$ such that $\varphi_i(X_i)=X\times\{i\}\equiv X$ and $\varphi_i\circ{\beta_h}_{|X_i}=\alpha_{\varepsilon_i(h)}\circ{\varphi_i}_{|X_i}$, for all $h\in S_i$. In particular,  $\widetilde\mu(X_i)=1$. Thus,  $t=\widetilde\mu(X_0)=\sum_{i=1}^l\widetilde\mu(X_i)=l\in\mathbb N$. Since $n$ is the smallest integer with $n\geq t$, we get that
  $n=t=l$ and $p=1_{\mathscr N}$.

For $1\leq i\leq n$, let $p_i=\textbf {1}_{X_i}\in\widetilde\Q$ and $u_i\in\sN_{\sN}(\widetilde{\mathcal Q})$ such that $u_iau_i^*=a\circ\varphi_i^{-1}$, for every $a\in\widetilde{\mathcal Q}$. Then $u_ip_iu_i^*=1\otimes e_i$ and since $\beta_h=\text{Ad}(\theta(u_{\widehat{h}})\omega_h)$ we find $(\zeta_{i,h})_{h\in S_i}\subset\sU({\Q})$ with
\begin{equation}\label{th}
\text{$u_i\theta(u_{\widehat{h}})\omega_hp_iu_i^*=\zeta_{i,h}v_{\widehat{\varepsilon_i(h)}}\otimes e_i$, for every $h\in S_i$.}
\end{equation}

After replacing $ S_i$ by $ S=\cap_{i=1}^n S_i$ we may assume that $ S_i= S$, for all $1\leq i\leq n$.
We will prove the conclusion for $K=\pi^{-1}( S)$. 
Let $\M_0=\text{L}(K)$.
To prove the conclusion, it suffices to find a projection $p_0\in\theta(\M_0)'\cap\sN$ with $(\tau\otimes\text{Tr})(p_0)=k\in\{1,\ldots,n\}$, homomorphisms $\delta:K\rightarrow H$, $\rho:K\rightarrow\sU_k(\mathbb C)$, $w\in\sU(\sN)$ such that $\delta$ is injective, $wp_0w^*=1\otimes (\sum_{i=1}^ke_i)$ and $w\theta(u_g)p_0w^*=v_{\delta(g)}\otimes\rho(g)$, for all $g\in K$.
 Indeed, once we have this assertion, the conclusion will follow by a maximality argument. This concludes the first part of the proof.

The rest of the proof, which is divided between four claims (Claims \ref{P0}-\ref{Ji}), is devoted to proving the last assertion. We begin with the following:

\begin{claim}\label{P0}
There are $1\leq k\leq n$ and a homomorphism $\varepsilon: S\rightarrow D$ such that, after renumbering, we have $p_0=\sum_{i=1}^kp_i\in\theta(\M_0)'\cap \sN$ and can take $\varepsilon_i=\varepsilon$, for every $1\leq i\leq k$.
\end{claim}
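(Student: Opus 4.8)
The goal of Claim \ref{P0} is to organize the $n$ copies $X_1,\ldots,X_n$ of $X$ (equivalently the projections $p_1,\ldots,p_n$) into blocks on which the homomorphisms $\varepsilon_i\colon S\to D$ agree and, crucially, on which the sum of the corresponding $p_i$ is $\theta(\M_0)$-central. The natural strategy is an \emph{intertwining argument between the various corners}: for each pair $i,j$, compare the two embeddings $S\ni h\mapsto v_{\widehat{\varepsilon_i(h)}}\otimes e_i$ and $h\mapsto v_{\widehat{\varepsilon_j(h)}}\otimes e_j$ of $\text{L}(S)$ (after twisting by the cocycles $\zeta_{i,h}$, $\zeta_{j,h}$) sitting inside the single algebra $\theta(\M_0)$. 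The point is that all these corners come from restricting the \emph{same} map $\theta$ to $\M_0$; hence $\theta(u_{\widehat h})$ intertwines the $i$-th corner and the $j$-th corner through the partial isometry $u_j^*(1\otimes e_{ji})u_i$ (or rather through $p_j\theta(u_{\widehat h})p_i$), and one wants to decide for which pairs $(i,j)$ this partial isometry is nonzero.

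**Key steps.** First I would use \eqref{th} to rewrite, for $h\in S$, the element $p_j\theta(u_{\widehat h})p_i$ in the Cartan picture: conjugating by the $u_i$'s turns $\theta(u_{\widehat h})p_i$ into $u_i^*(\zeta_{i,h}^*\,v_{\widehat{\varepsilon_i(h)}}^{-1}\,\omega_h^{-1}\cdots)$-type expressions, so that $p_j\theta(u_{\widehat h})p_i\ne 0$ forces a measure-theoretic coincidence between the partial orbit equivalences $\varphi_i,\varphi_j$ and hence (using that $\beta_{|S}$ restricted to each $X_i$ is weakly mixing, that $D$ acts with the relevant amenable/virtually-cyclic stabilizers, and the structure of $\sR(D\times\ZZ/n\ZZ\curvearrowright\widetilde X)$) an equality $\varepsilon_i=\text{Ad}(d)\circ\varepsilon_j$ for some $d\in D\times\ZZ/n\ZZ$ — in fact, after absorbing the $\ZZ/n\ZZ$-part and adjusting $\varphi_j$ by an element of the full group, $\varepsilon_i=\varepsilon_j$. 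Thus ``$p_j\theta(\M_0)p_i\ne 0$'' is an equivalence relation on $\{1,\ldots,n\}$, and on each equivalence class one can conjugate (by a unitary in $\sN_\sN(\widetilde\Q)$) so that the $\varepsilon_i$ literally coincide. Second, having fixed one such class, renumber it as $\{1,\ldots,k\}$ and set $p_0=\sum_{i=1}^k p_i$; I must check $p_0\in\theta(\M_0)'\cap\sN$. Centrality with respect to $\theta(u_{\widehat h})$, $h\in S$, follows because $\theta(u_{\widehat h})$ permutes the classes and fixes each of them (it acts within the block by the weak-mixing argument), so it commutes with the block projection $p_0$. Since the $u_{\widehat h}$, $h\in S$, together with $\theta(\P)=\theta(\text{L}(A))\subset\widetilde\Q p$ generate $\theta(\M_0)$ — using $K=\pi^{-1}(S)$ and that $\P$ is central-enough inside — and $p_0\in\widetilde\Q\subset\widetilde\Q'\cap\sN$ already commutes with $\theta(\P)$, we get $p_0\in\theta(\M_0)'\cap\sN$.

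**The main obstacle.** The delicate point is the transition from ``the corners $i$ and $j$ are linked by a nonzero partial isometry inside $\theta(\M_0)$'' to ``$\varepsilon_i$ and $\varepsilon_j$ are conjugate in $D$, with the conjugating element realizable so that after a full-group adjustment they become equal.'' This requires re-running a piece of the Popa-style strong-rigidity machinery (essentially the content of Theorem \ref{SOE} and Lemma \ref{wmix}) in the relative setting where the target is a \emph{corner} $1\otimes e_j$ rather than all of $\widetilde X\times\{j\}$, and carefully tracking that the weak-mixing of $\beta_{|S}$ on each $X_i$, together with the amenability of $\text{Stab}_D$ and virtual cyclicity of $\text{C}_D(g)$, kills all the ``degenerate'' intertwiners except the genuine ones. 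Managing the bookkeeping of the cocycles $\zeta_{i,h}$ and the correcting unitaries $\omega_h$, and ensuring the conjugations performed on one block do not disturb the already-normalized structure \eqref{th} on the others, is where most of the technical care will go; everything else is organizational (defining the equivalence relation, renumbering, and the centrality check for $p_0$).
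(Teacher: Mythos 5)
Your skeleton matches the paper's: compare the $\varepsilon_i$ pairwise, show that a nonzero linking corner forces $\varepsilon_i$ and $\varepsilon_j$ to be conjugate, group the indices by conjugacy class of $\varepsilon_i$, absorb the conjugating elements into the $\varphi_i$, and check that the block projection is $\theta(\M_0)$-central. But the step you flag as ``the main obstacle'' is genuinely missing, and the tool you propose for it is not the one that works. The paper does \emph{not} re-run the orbit-equivalence machinery of Theorem \ref{SOE} in a relative/corner setting. Instead it works with the relative commutant $\sR=\theta(\P)'\cap p\sN p$: an element $x\in(\sR)_1$ with $p_ixp_j\neq 0$ gives, via \eqref{th} and $x_h=\beta_h(p_ix)\in(\sR)_1$, the intertwining identity $\bigl(u_i^*(\zeta_{i,h}v_{\widehat{\varepsilon_i(h)}}\otimes e_i)u_i\bigr)xp_j=x_h\bigl(u_j^*(\zeta_{j,h}v_{\widehat{\varepsilon_j(h)}}\otimes e_j)u_j\bigr)$ for all $h\in S$. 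Because $\sR\prec^{\rm s}_{\sN}\widetilde\Q$, \cite[Lemma 2.5]{Va10} yields a single finite set $F\subset D$ controlling the Fourier support (in the $D$-direction) of both sides uniformly in $h$, whence $F\varepsilon_i(h)F\cap F\varepsilon_j(h)F\neq\emptyset$ for all $h\in S$; then \cite[Lemma 7.1]{BV13}, using that $\varepsilon_i(S)$ is an infinite property (T) group and that $\mathrm{C}_D(g)$ is virtually cyclic (so twisted conjugacy classes are infinite), upgrades this to $\varepsilon_i=\mathrm{Ad}(g)\circ\varepsilon_j$. None of these three ingredients (the algebra $\sR$ of intertwiners, the uniform finite-support estimate from $\sR\prec^{\rm s}\widetilde\Q$, and the BV13 conjugacy lemma) appears in your outline, and it is not clear that a ``corner version of Theorem \ref{SOE}'' could be made to produce the same conclusion.

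A second, smaller gap is in the centrality check. You assert that $\theta(u_{\widehat h})$ ``fixes each block, so it commutes with the block projection,'' but what the $\beta(S)$-invariance of the $X_i$ gives you directly is that $\theta(u_{\widehat h})\omega_h$ commutes with each $p_i$ — the correcting unitaries $\omega_h\in\sU(\sR)$ are in the way. The paper removes them precisely by observing that non-conjugacy of $\varepsilon_i,\varepsilon_j$ forces $p_i\sR p_j=\{0\}$ across blocks, so $p_0=\sum_{i=1}^kp_i$ is \emph{central in $\sR$}, hence commutes with every $\omega_h$, hence with $\theta(u_{\widehat h})$. Your choice of relation ``$p_j\theta(\M_0)p_i\neq 0$'' obscures this: since $p_j\theta(u_{\widehat h})p_i=\theta(u_{\widehat h})\omega_hp_j\omega_h^*p_i$, deciding when it vanishes reduces to the structure of $p_j\sR p_i$ anyway, so you would be forced back to the paper's relation. (Also, you do not need — and should not claim — that the relation is an equivalence relation; grouping by conjugacy of the $\varepsilon_i$ and using the contrapositive suffices.) The remaining points in your write-up (that $p_0\in\widetilde\Q$ commutes with $\theta(\P)\subset\widetilde\Q p$, that $\P$ and $(u_{\widehat h})_{h\in S}$ generate $\M_0$, and that replacing $\varphi_i$ by $\alpha_{g_i^{-1}}\circ\varphi_i$ equalizes the $\varepsilon_i$ within a block) are correct and agree with the paper.
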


\begin{proof}[Proof of Claim \ref{P0}.]
We start by showing that if $1\leq i,j\leq n$ and $p_i\sR p_j\not=\{0\}$, then $\varepsilon_i$ and $\varepsilon_j$ are conjugate. Let $x\in (\sR)_1$ with $p_ixp_j\not=0$. Then $x_h=\text{Ad}(\theta(u_{\widehat{h}})\omega_h)(p_ix)=\beta_h(p_ix)\in (\sR)_1$ and $(\theta(u_{\widehat{h}})\omega_hp_i)xp_j=x_h(\theta(u_{\widehat{h}})\omega_hp_j)$, for every $h\in S$. 
Using \eqref{th} we derive that \begin{equation}\label{zzeta}\text{$\big(u_i^*(\zeta_{i,h}v_{\widehat{\varepsilon_i(h)}}\otimes e_i)u_i \big)xp_j=x_h \big(u_j^*(\zeta_{j,h}v_{\widehat{\varepsilon_j(h)}}\otimes e_j)u_j\big)$, for every $h\in S$.}\end{equation}

For every subset $F\subset D$, let $P_{F}$ be the orthogonal projection from $\text{L}^2(\sN)$ onto the $\|\cdot\|_2$-closed linear span of $\{v_g\otimes x\mid g\in\chi^{-1}(F),x\in\mathbb M_n(\mathbb C)\}$. Since $(\zeta_{i,h})_{h\in S}, (\zeta_{j,h})_{h\in S}\subset\sU(\Q)$ and $(x_h)_{h\in S}\subset (\sR)_1$, then using basic $\|\cdot\|_2$-approximations and also
 \eqref{theta(P)} in combination with \cite[Lemma 2.5]{Va10} (for b)), we  can find finite $F\subset D$ so that for every $h\in D$ we have 
 \begin{equation*}\begin{split}& \text{a)} \qquad \|\big(u_i^*(\zeta_{i,h}v_{\widehat{\varepsilon_i(h)}}\otimes e_i)u_i \big)xp_j-P_{F\varepsilon_i(h)F}(\big(u_i^*(\zeta_{i,h}v_{\widehat{\varepsilon_i(h)}}\otimes e_i)u_i \big)xp_j)\|_2<\frac{\|p_ixp_j\|_2}{2}, \; \text{ and  }\\ &
 \text{b)} \qquad \|x_h \big(u_j^*(\zeta_{j,h}v_{\widehat{\varepsilon_j(h)}}\otimes e_j)u_j\big)-P_{F\varepsilon_j(h)F}(x_h \big(u_j^*(\zeta_{j,h}v_{\widehat{\varepsilon_j(h)}}\otimes e_j)u_j\big))\|_2<\frac{\|p_ixp_j\|_2}{2}.\end{split}\end{equation*}
 
 Combining a)-b) with \eqref{zzeta}, we derive that $F\varepsilon_i(h)F\cap F\varepsilon_j(h)F\not=\emptyset$, for every $h\in S$.
Since $\varepsilon_i$ is injective and $ S< B$ has finite index, $\varepsilon_i( S)$ is an infinite property (T) group.
For any $g\in D\setminus\{1\}$, $\text{C}_{D}(g)$ is virtually abelian and hence $\{\varepsilon_i(h)g\varepsilon_i(h)^{-1}\mid h\in S\}$ is infinite.  By \cite[Lemma 7.1]{BV13} we find $g\in D$ such that $\varepsilon_i(h)=g\varepsilon_j(h)g^{-1}$, for every $h\in S$. This proves our assertion that $\varepsilon_i$ and $\varepsilon_j$ are conjugate.

Let $\varepsilon=\varepsilon_1: S\rightarrow D$.
After renumbering, we may assume that $\varepsilon_1,\ldots,\varepsilon_k$ are conjugate to $\varepsilon$ and $\varepsilon_{k+1},\ldots,\varepsilon_n$ are not conjugate to $\varepsilon$, for some $1\leq k\leq n$. The previous paragraph implies that  $p_i\sR p_j=\{0\}$, for every $1\leq i\leq k$ and $k+1\leq j\leq n$. Thus, $p_0=\sum_{i=1}^kp_i$ belongs to the center of $\sR$. As $p_0$ commutes with $\theta(u_{\widehat{g}})\omega_g$ and $\omega_g\in\sU(\sR)$, then $p_0$ commutes with $\theta(u_{\widehat{g}})$, for every $g\in S$. 
Since $p_i\in \widetilde\Q\subset \sR=\theta(\P)'\cap\sN$, for every $1\leq i\leq n$, $p_0\in \theta(\P)'\cap\sN$. Thus, $p_0$ also commutes with $\theta(\P)$.
Since $\P$ and $(u_{\widehat{g}})_{g\in S}$ generate $\M_0$, we get that $p_0\in\theta(\M_0)'\cap \sN$. Moreover, if $1\leq i\leq k$,  there is $g_i\in D$ such that $\varepsilon_i(h)=g_i\varepsilon(h)g_i^{-1}$, for every $h\in S$. After replacing $\varphi_i$ by $\alpha_{g_i^{-1}}\circ\varphi_i$ we may assume that $\varepsilon_i=\varepsilon$, for every $1\leq i\leq k$.
 \end{proof}

Let $u=\sum_{i=1}^ku_ip_i$ and $e=\sum_{i=1}^ke_i$. Then $u$ is a partial isometry, $uu^*=1\otimes e, u^*u=p_0$ and $u\widetilde{\mathcal Q} p_0u^*=\widetilde{\mathcal Q}(1\otimes e)$. If we let $\zeta_h=\sum_{i=1}^k\zeta_{i,h}\otimes e_i\in\sU(\widetilde{\mathcal Q}(1\otimes e))$, then \eqref{th} gives that
\begin{equation}\label{thetap_0}\text{$u\theta(u_{\widehat{h}})\omega_hp_0u^*=\zeta_h(v_{\widehat{\varepsilon(h)}}\otimes e)$, for every $h\in S$.}\end{equation}

Identify $\sN_1:=(1\otimes e)\sN(1\otimes e)$ and $\Q_1:=\widetilde{\mathcal Q}(1\otimes e)$ with $\Nn\overline{\otimes}\mathbb M_k(\mathbb C)$ and $\mathcal Q\overline{\otimes}\mathbb D_k(\mathbb C)$, respectively.
Consider the unital $*$-homomorphism $\theta_1:\M_0\rightarrow \sN_1$ given by $\theta_1(x)=u\theta(x)p_0u^*$, for every $x\in \M_0$, and let $\sR_1=\theta_1(\P)'\cap \sN_1$.  Letting $w_h=u\omega_hp_0u^*\in\sU(\sR_1)$, then \eqref{thetap_0} rewrites as
\begin{equation}\label{thetap}\text{$\theta_1(u_{\widehat{h}})w_h=\zeta_h(v_{\widehat{\varepsilon(h)}}\otimes 1)$, for every $h\in S$. }\end{equation}

By \eqref{theta(PP)}, $\Q_1\subset\sR_1$.
Since $(\theta_1(u_{\widehat{h}})w_h)_{h\in B}$ normalizes $\sR_1$ and $(\zeta_h)_{h\in S}\subset\sU(\Q_1)$, \eqref{thetap} implies that $(v_{\widehat{\varepsilon(h)}}\otimes 1)_{h\in S}$ normalizes $\sR_1$.  Thus,  $\eta_h=\zeta_h\text{Ad}(v_{\widehat{\varepsilon(h)}}\otimes 1)(w_h^*)\in\sU(\sR_1)$ and
\begin{equation}\label{etah}
\text{$\theta_1(u_{\widehat{h}})=\eta_h(v_{\widehat{\varepsilon(h)}}\otimes 1)$, for every $h\in S$.}
\end{equation}

\begin{claim}\label{T} $\sR_1={\mathcal Q}\overline{\otimes}\T$, for a von Neumann subalgebra $\T\subset\mathbb M_k(\mathbb C)$.
\end{claim}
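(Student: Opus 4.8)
\textbf{Plan for Claim \ref{T}.} The goal is to identify the relative commutant $\sR_1 = \theta_1(\P)' \cap \sN_1$ as a tensor product $\Q \overline{\otimes} \T$ with $\T \subset \mathbb M_k(\mathbb C)$. The strategy is to first locate $\sR_1$ precisely inside the Cartan subalgebra picture, then show it is actually contained in $\Q_1 = \Q \overline{\otimes} \mathbb D_k(\mathbb C)$ (or rather in $\Q \overline{\otimes} \mathbb M_k(\mathbb C)$), and finally pin down the matrix part. First I would record what we already know: by \eqref{theta(PP)} we have $\theta_1(\P) \subset \Q_1 \subset \sR_1$, and by the solidity-type argument already invoked (Corollary \ref{solidity}, applied just as for $\sR$) together with \eqref{theta(P)} and Lemma \ref{conj1}, $\sR_1$ is amenable and $\sR_1 \prec^{\mathrm s}_{\sN_1} \Q_1$. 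Since $\Q_1 \subset \sN_1$ is a Cartan subalgebra containing the abelian algebra $\theta_1(\P)$, and $\sR_1 = \theta_1(\P)' \cap \sN_1$ is precisely the relative commutant, Lemma \ref{conj1} (applied with $\D = \theta_1(\P)$, $\C = \sR_1$) already tells us there is a unitary conjugating $\theta_1(\P)$ into $\Q_1$ with $\theta_1(\P) \subset u\Q_1 u^* \subset \sR_1$ — but we want more, namely $\sR_1 = \Q \overline{\otimes}\T$ on the nose.

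The cleaner route, which I would take, is to use the weak mixing already established. Recall from the construction that after the reduction, the restriction of $\beta$ to $\Q_1$ (equivalently to $\Q \overline{\otimes} \mathbb D_k(\mathbb C)$, via the identification $X_i \cong X$ and $p_i \leftrightarrow 1\otimes e_i$) is, for each $i \le k$, conjugate via $\varphi_i$ to the action $\alpha \circ \varepsilon$ of $S$ on $\Q = \mathrm{L}^\infty(X)$, and this $S$-action is weakly mixing (that is how the $X_i$ were chosen in Lemma \ref{wmix}). Now $\sR_1$ is normalized by the unitaries $\theta_1(u_{\widehat h}) w_h$, $h \in B$, which implement the $\beta$-action on $\Q_1$; in particular $\sR_1$ is globally invariant under $\mathrm{Ad}(\theta_1(u_{\widehat h}) w_h)$ for $h \in S$. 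Conjugating by $u_i$ (which conjugates $\Q_1$ to itself compatibly with $\varphi_i$), the block $p_i \sR_1 p_i$ becomes a subalgebra of $\sN_1$ containing $\Q$ in its $i$-th corner and invariant under $\mathrm{Ad}(v_{\widehat{\varepsilon(h)}} \otimes 1)$ for $h \in S$, using \eqref{etah} and that $\eta_h \in \sU(\sR_1)$. A subalgebra of $\Nn = \Q \rtimes D$ containing the Cartan $\Q$ has the form $\Q \rtimes_{\mathrm{alg}} \mathcal G$ for a subgroupoid; invariance under the weakly mixing subgroup $\varepsilon(S) < D$, combined with the fact that $\sR_1 \prec^{\mathrm s} \Q_1$ forces it to stay ``close'' to $\Q_1$, should force the block to be exactly $\Q$ in that corner. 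Off-diagonal blocks $p_i \sR_1 p_j$ are handled by the same partial-intertwining-plus-weak-mixing argument: either such a block is zero or it provides an $S$-intertwiner between the (conjugate) copies of $\varepsilon$, and its elements must lie in $\Q \otimes \mathbb C(e_{ij})$. Assembling the blocks, $\sR_1 \subseteq \Q \overline{\otimes} \mathbb M_k(\mathbb C)$, and writing the reverse inclusion by noting $\Q_1 \subset \sR_1$ together with the partial isometries realizing the identifications, we get $\sR_1 = \Q \overline{\otimes} \T$ for $\T := (\mathbb C 1 \overline{\otimes} \mathbb M_k(\mathbb C)) \cap (\text{appropriate commutant})$, a subalgebra of $\mathbb M_k(\mathbb C)$ — concretely $\T = \{ X \in \mathbb M_k(\mathbb C) \mid 1 \otimes X \in \sR_1\}$, which one checks is a $*$-subalgebra and that $\Q \overline{\otimes}\T$ exhausts $\sR_1$.

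The step I expect to be the main obstacle is ruling out ``extra'' elements in the relative commutant coming from the group side: a priori $\theta_1(\P)' \cap \sN_1$ could contain, in addition to $\Q \overline{\otimes}\T$, elements of the form $\sum_d q_d v_d \otimes m_d$ with $d \in D \setminus \{1\}$ commuting with $\theta_1(\P)$. This is exactly where the weak mixing of $\beta|_S$ on $\Q_1$ (hence of $\alpha\circ\varepsilon$ on $\Q$, i.e. of $\varepsilon(S) \curvearrowright (X,\mu)$) is indispensable: any such element would be fixed by a weakly mixing group of automorphisms, and a standard averaging / Fourier-support argument in $\Q \rtimes D$ forces all its Fourier coefficients outside $\Q \overline{\otimes} \mathbb M_k(\mathbb C)$ to vanish. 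One has to be slightly careful that $\varepsilon(S)$ is genuinely weakly mixing on $X = \widehat C^I$ and not merely ergodic — but this is built into Lemma \ref{wmix} (the $X_i$ are chosen so that $\beta|_{S_i}$ restricted to $X_i$ is weakly mixing), and the action $\alpha$ is built over $D \curvearrowright I$ with infinite orbits by Lemma \ref{bover}, so weak mixing of the built-over action restricted to an infinite-index-stabilizer subgroup is inherited. Once that vanishing is in hand, the decomposition $\sR_1 = \Q \overline{\otimes}\T$ and the verification that $\T$ is a unital $*$-subalgebra of $\mathbb M_k(\mathbb C)$ (hence a multi-matrix algebra) are routine.
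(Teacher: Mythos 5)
Your overall skeleton matches the paper's: first show $\sR_1\subset \Q\overline{\otimes}\mathbb M_k(\mathbb C)$ by exploiting a sequence $(h_m)\subset S$ for which conjugation by $v_{\widehat{\varepsilon(h_m)}}\otimes 1$ mixes away Fourier modes supported outside $C^{(I)}$ (using that $\varepsilon(S)$ is infinite with property (T) and that centralizers in $D$ are virtually cyclic), then identify the matrix part via the ergodicity of the built-over action $\gamma$. The second step of your sketch is acceptable modulo actually writing the disintegration $\sR_1=\int_X^{\oplus}\T_x\,\d\mu(x)$ and the invariance $\T_{\gamma_h(x)}=\T_x$; without that, your definition $\T=\{X\mid 1\otimes X\in\sR_1\}$ would only capture the constant sections.

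The genuine gap is in the first step. You assert that an element of $\sR_1$ with Fourier support outside $\Q\overline{\otimes}\mathbb M_k(\mathbb C)$ ``would be fixed by a weakly mixing group of automorphisms.'' It is not: by \eqref{etah} the unitaries $v_{\widehat{\varepsilon(h)}}\otimes 1$ only \emph{normalize} $\sR_1$, they do not fix its elements, and this is precisely the difficulty. The estimate one gets for free from the group-theoretic input is $\|\text{E}_{\Q_1}(a_1\,\text{Ad}(v_{\widehat{\varepsilon(h_m)}}\otimes 1)(b)\,a_2)\|_2\to 0$ for $b\perp \Q\overline{\otimes}\mathbb M_k(\mathbb C)$, whereas running the comparison on an element $a\in\sR_1$ requires the analogous decay for $\text{E}_{\sR_1}$. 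Converting one into the other is the technical heart of the claim and is exactly where $\sR_1\prec^{\rm s}_{\sN_1}\Q_1$ enters quantitatively: since $\Q_1$ is abelian, the intertwining forces $\sR_1$ to be type I with $\sZ(\sR_1)\subseteq\Q_1$, so $\sR_1 r_i\cong \sZ(\sR_1)r_i\otimes\mathbb M_{n_i}(\mathbb C)$ and $\Q_1 r_i\subseteq\sR_1 r_i$ admits a finite Pimsner--Popa basis; only then does one obtain $\|\text{E}_{\sR_1}(\text{Ad}(v_{\widehat{\varepsilon(h_m)}}\otimes 1)(b))\|_2\to 0$, which, combined with $\|\text{E}_{\sR_1}(\text{Ad}(v_{\widehat{\varepsilon(h_m)}}\otimes 1)(a))\|_2=\|a\|_2$ (global invariance), yields $a=\text{E}_{\Q\overline{\otimes}\mathbb M_k(\mathbb C)}(a)$. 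Your phrase that $\sR_1\prec^{\rm s}\Q_1$ ``should force the block to be exactly $\Q$'' points at the right input but supplies no mechanism, so as written the argument does not close; the same quantitative step would also be needed to rule out nontrivial graphings if you pursued your alternative intermediate-subalgebra/subgroupoid route.
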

\begin{proof}[Proof of Claim \ref{T}.]
First we show that $\sR_1\subset {\mathcal Q}\overline{\otimes}\mathbb M_k(\mathbb C)$.
To this end, since $\varepsilon(S)$ is an infinite property (T) group and $\text{C}_D(g)$ is virtually abelian for every $g\in D\setminus\{1\}$, there is a sequence $(h_m)\subset S$ such that for every $g_1,g_2\in D$ and $k\in D\setminus\{1\}$ we have $g_1\text{Ad}(\varepsilon(h_m))(k)g_2\not=1$, for every $m$ large enough. 

Next we claim that $\|\text{E}_{\Q}(a_1\text{Ad}(v_{\widehat{\varepsilon(h_m)}})(b)a_2)\|_2\rightarrow 0$, for every $a_1,a_2\in\Nn$ and $b\in\Nn\ominus\Q$. We only have to check this for $a_1=v_{g_1},a_2=v_{g_2},b=v_k$, where $g_1,g_2\in H,k\in H\setminus C^{(I)}$. In this case $\chi(k)\not=1$, thus $\chi(g_1\text{Ad}(\widehat{\varepsilon(h_m)})(k)g_2))=\chi(g_1)\text{Ad}(\varepsilon(h_m))(\chi(k))\chi(g_2)\not=1$ and therefore $\text{E}_{\Q}(a_1\text{Ad}(v_{\widehat{\varepsilon(h_m)}})(b)a_2)=0$, for every $m$ large enough. 

The previous paragraph further implies that \begin{equation}\label{conv'}\|\text{E}_{\Q_1}(a_1\text{Ad}(v_{\widehat{\varepsilon(h_m)}}\otimes 1)(b)a_2)\|_2\rightarrow 0, \text{ for every } a_1,a_2\in\sN_1\text{ and } b\in\sN_1\ominus (\Q\overline{\otimes}\mathbb M_k(\mathbb C)).\end{equation}  Let $\sZ(\sR_1)$ be the center of $\sR_1$. Since $\Q_1\subset\sN_1$ is a masa we have $\sZ(\sR_1) \subseteq \Q_1\subseteq \sR_1$. Moreover, by \eqref{theta(P)} we have $\sR_1\prec_{\sN_1}^{\text{s}}\Q_1$ and since $\Q_1$ is abelian we get that $\sR_1$ is a type I von Neumann algebra. Hence there exist projections  $(r_i)\subset \sZ(\sR_1)$ with $\sum_i r_i=1$ such that $\sR_1 r_i \cong (\sZ(\sR_1) r_i) \otimes\mathbb M_{n_i}(\mathbb C)$, for some integer $n_i$. In particular, $\sZ(\sR_1) r_i \subseteq \sR_1 r_i$ admits a finite Pimsner-Popa basis \cite{PP86} and therefore so does $\Q_1 r_i \subseteq \sR_1 r_i$, for all $i$. Combining this with \eqref{conv'}, basic approximations show that 

\begin{equation}\label{ortho}\text{$\|\text{E}_{\sR_1}(\text{Ad}(v_{\widehat{\varepsilon(h_m)}}\otimes 1)(b))\|_2\rightarrow 0$, for every $b\in\sN_1\ominus (\Q\overline{\otimes}\mathbb M_k(\mathbb C))$.} \end{equation}


To see that $\sR_1\subset {\mathcal Q}\overline{\otimes}\mathbb M_k(\mathbb C)$, let $a\in \sR_1$. Put $c=\text{E}_{ {\mathcal Q}\overline{\otimes}\mathbb M_k(\mathbb C)}(a)$ and $b=a-c$.
Since $v_{\widehat{\varepsilon(h_m)}}\otimes 1$ normalizes $\sR_1$, $\text{Ad}(v_{\widehat{\varepsilon(h_m)}}\otimes 1)(a)\in\sR_1$ and thus $\|\text{E}_{\sR_1}(\text{Ad}(v_{\widehat{\varepsilon(h_m)}}\otimes 1)(a))\|_2=\|a\|_2$, for every $m$. 
On the other hand, $\|\text{E}_{\sR_1}(\text{Ad}(v_{\widehat{\varepsilon(h_m)}}\otimes 1)(b))\|_2\rightarrow 0$ by \eqref{ortho}. Thus, we get that $\|\text{E}_{\sR_1}(\text{Ad}(v_{\widehat{\varepsilon(h_m)}}\otimes 1)(c))\|_2\rightarrow \|a\|_2$. Since $\|\text{E}_{\sR_1}(\text{Ad}(v_{\widehat{\varepsilon(h_m)}}\otimes 1)(c))\|_2\leq \|c\|_2\leq \|a\|_2$, we conclude that $\|c\|_2=\|a\|_2$. Hence, $a=c\in {\mathcal Q}\overline{\otimes}\mathbb M_k(\mathbb C).$ This proves that  $\sR_1\subset {\mathcal Q}\overline{\otimes}\mathbb M_k(\mathbb C)$.

Therefore, we have that  $\Q_1={\mathcal Q}\overline{\otimes}\mathbb D_k(\mathbb C)\subseteq\sR_1\subseteq {\mathcal Q}\overline{\otimes}\mathbb M_k(\mathbb C).$
Since $\Q=\text{L}^{\infty}(X,\mu)$, we can disintegrate $\sR_1=\int_X^{\oplus}\T_x\;\text{d}\mu(x)$, where $(\T_x)_{x\in X}$ is a measurable field of von Neumann subalgebras of $\mathbb M_k(\mathbb C)$ containing $\mathbb D_k(\mathbb C)$. (Note: although we will not use this fact we note that, as pointed out to us by the referee, any intermediate von Neumann algebra $\mathbb D_k(\mathbb C)\subset\mathcal S\subset\mathbb M_k(\mathbb C)$ is of the form $\mathcal S=\oplus_jp_j\mathbb M_k(\mathbb C)p_j$, for a partition of unity $(p_j)$ with projections from $\mathbb D_k(\mathbb C)$).
We denote $$\text{$\gamma_h=\text{Ad}(v_{\widehat{\varepsilon(h)}})\in\text{Aut}(\Q)$, for $h\in S$.}$$ Then $\gamma=(\gamma_h)_{h\in S}$ defines an action of $ S$ on $\Q$.  Since $(v_{\widehat{\varepsilon(h)}}\otimes 1)_{h\in S}$ normalizes $\sR_1$, we have $\T_{\gamma_h(x)}=\T_x$, for every $h\in S$ and almost every $x\in X$.  Lemma \ref{bover} implies that $\gamma$ is built over the action $S\curvearrowright I$ given by $h\cdot i=\varepsilon(h)i$.
Since $\text{Stab}_D(i)$ is amenable, $\varepsilon$ is injective and $S$ is an infinite group with property (T), the action $S\curvearrowright I$ has infinite orbits. This implies that $\gamma$ is weakly mixing, hence ergodic. Thus, one can find a von Neumann subalgebra $\T\subset\mathbb M_k(\mathbb C)$ such that $\T_x=\T$, for almost every $x\in X$, which proves the claim.  \end{proof}

Next, let $\sU=\sU(\T)/\sU(\sZ(\T))$, where $\sZ(\T)$ is the center of $\T$, and $q:\sU(\T)\rightarrow\sU$ be the quotient homomorphism. We continue with the following claim:

\begin{claim}\label{nu}
There are maps $\xi: S\rightarrow\sU(\T)$ and $\nu: S\rightarrow\sU(\sZ(\sR_1))$ such that the map $ S\ni h\mapsto q(\xi_h)\in\sU$ is a homomorphism and after replacing  $\theta_1$ by $\emph{Ad}(u)\circ\theta_1$, for some $u\in\sU(\sR_1)$, we have that
$\theta_1(u_{\widehat{h}})=\nu_h(v_{\widehat{\varepsilon(h)}}\otimes \xi_h)$, for every $h\in S$.
\end{claim}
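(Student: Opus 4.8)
The plan is to read off from \eqref{etah} a $1$-cocycle relation for $(\eta_h)_{h\in S}$ modulo the centre of $\sR_1$, and then straighten that cocycle with Popa's cocycle superrigidity theorem. First I would record the cocycle relation. Pick $2$-cocycles $c\colon B\times B\to A$ and $d\colon D\times D\to C^{(I)}$ determined by the chosen sections, so $\widehat g\,\widehat h=c(g,h)\widehat{gh}$ and $\widehat{g'}\,\widehat{h'}=d(g',h')\widehat{g'h'}$. Applying $\theta_1$ to $u_{\widehat g}u_{\widehat h}=u_{c(g,h)}u_{\widehat{gh}}$, inserting \eqref{etah}, using that $\varepsilon$ is a homomorphism (whence $v_{\widehat{\varepsilon(g)}}v_{\widehat{\varepsilon(h)}}=v_{d(\varepsilon(g),\varepsilon(h))}v_{\widehat{\varepsilon(gh)}}$) and that $\text{Ad}(v_{\widehat{\varepsilon(g)}}\otimes1)$ restricts to $\gamma_g\otimes\text{id}$ on $\sR_1=\Q\overline{\otimes}\T$ (Claim \ref{T}), a direct computation gives
\[
\eta_g\,(\gamma_g\otimes\text{id})(\eta_h)=\mu_{g,h}\,\eta_{gh},\qquad \mu_{g,h}:=\theta_1(u_{c(g,h)})\big(v_{d(\varepsilon(g),\varepsilon(h))}^{\,*}\otimes1\big),\quad g,h\in S.
\]
The key observation is that $\mu_{g,h}\in\sU(\sZ(\sR_1))$: one has $v_{d(\varepsilon(g),\varepsilon(h))}^{\,*}\otimes1\in\Q\overline{\otimes}\mathbb C 1\subseteq\sZ(\sR_1)$, and since $\theta_1(\P)\subseteq\Q_1\subseteq\sR_1$ while $\theta_1(\P)$ commutes with $\sR_1=\theta_1(\P)'\cap\sN_1$, it follows that $\theta_1(\P)\subseteq\sR_1\cap\sR_1'=\sZ(\sR_1)=\Q\overline{\otimes}\sZ(\T)$.

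Next I would push everything into $\sU=\sU(\T)/\sU(\sZ(\T))$. Identifying $\sU(\sR_1)$ with the measurable maps $X\to\sU(\T)$ and $\sU(\sZ(\sR_1))$ with those into $\sU(\sZ(\T))$, postcomposition with $q$ yields a surjective homomorphism $\overline q\colon\sU(\sR_1)\to\text{L}^\infty(X,\sU)$, with kernel $\sU(\sZ(\sR_1))$, which intertwines $\gamma_g\otimes\text{id}$ with the induced action $\overline\gamma$ of $S$ on $\text{L}^\infty(X,\sU)$. Applying $\overline q$ to the identity above destroys the central term $\mu_{g,h}$, so $\overline\eta_h:=\overline q(\eta_h)$ satisfies $\overline\eta_g\,\overline\gamma_g(\overline\eta_h)=\overline\eta_{gh}$; that is, $(\overline\eta_h)_{h\in S}$ is (modulo the customary change of cocycle convention) a $\sU$-valued $1$-cocycle for $S\curvearrowright^\gamma(X,\mu)$. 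Now $\T$ is finite dimensional (Claim \ref{T}), so $\sU$ is a compact metrizable group and therefore lies in $\sU_{\text{fin}}$; moreover $\gamma$ is built over $S\curvearrowright I$ (Lemma \ref{bover}), this action has infinite orbits (as shown in the proof of Claim \ref{T}), and $S$ has property (T). Thus Theorem \ref{CS} applies and supplies a measurable map $\overline\varphi\colon X\to\sU$ and a homomorphism $\overline\xi\colon S\to\sU$ with $(\overline\eta_h)$ cohomologous to $(\overline\xi_h)$.

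Finally I would lift and conjugate. Lifting $\overline\varphi$ to a Borel map $\varphi\colon X\to\sU(\T)$ and putting $u:=\varphi\in\sU(\sR_1)$, I would replace $\theta_1$ by $\text{Ad}(u)\circ\theta_1$; as $u\in\sR_1$ this leaves $\sR_1=\theta_1(\P)'\cap\sN_1$ unchanged, and by \eqref{etah} it replaces $\eta_h$ by $\eta_h':=u\,\eta_h\,(\gamma_h\otimes\text{id})(u)^*\in\sU(\sR_1)$, for which, by the choice of $\overline\varphi$, $\overline q(\eta_h')=\overline\xi_h$ is constant in $X$. Choosing any lift $\xi_h\in\sU(\T)$ of $\overline\xi_h$, the element $\nu_h:=\eta_h'\,(1\otimes\xi_h)^{-1}$ is valued in $\ker q=\sU(\sZ(\T))$, so $\nu_h\in\sU(\sZ(\sR_1))$, and $\theta_1(u_{\widehat h})=\eta_h'\,(v_{\widehat{\varepsilon(h)}}\otimes1)=\nu_h\,(v_{\widehat{\varepsilon(h)}}\otimes\xi_h)$ for all $h\in S$, with $h\mapsto q(\xi_h)=\overline\xi_h$ a homomorphism; this is the assertion of the claim.

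I expect the main obstacle to be the centrality of the twist $\mu_{g,h}$: it is precisely what turns the a priori projective/twisted cocycle into the honest cocycle $(\overline\eta_h)$ to which Theorem \ref{CS} can be applied, and the unavoidable gap between $\eta_h$ and its image in $\text{L}^\infty(X,\sU)$ is what leaves behind the central factor $\nu_h$ (to be removed later via Theorem \ref{jiang}). A secondary, purely bookkeeping, difficulty is to keep the von Neumann-algebraic and point-transformation cocycle conventions aligned, so that conjugating by $\varphi$ genuinely realizes the cohomology provided by Theorem \ref{CS}.
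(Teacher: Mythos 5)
Your proposal is correct and follows essentially the same route as the paper: one shows $\eta_{gh}^*\eta_g(\gamma_g\otimes\mathrm{Id})(\eta_h)\in\sZ(\sR_1)$ (the paper deduces this abstractly from the fact that both $\mathrm{Ad}(\theta_1(u_{\widehat h}))$ and $\gamma_h\otimes\mathrm{Id}$ define actions of $S$ on $\sR_1$, while you compute the central twist explicitly via the section $2$-cocycles), passes to the $\sU$-valued $1$-cocycle $q(\eta_h(\cdot))$ for $\gamma$, applies Theorem \ref{CS} using that $\sU\in\sU_{\text{fin}}$, and lifts the transfer map and homomorphism back to $\sU(\T)$.
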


\begin{proof}[Proof of Claim \ref{nu}]
Note that $(\text{Ad}(\theta_1(u_{\widehat{h}})))_{h\in S}$ and $(\gamma_h\otimes\text{Id}_{\T})_{h\in S}$ define actions of $ S$ on $\sR_1=\Q\overline{\otimes} \T$. Combining this observation with \eqref{etah} gives that
\begin{equation}\label{cocycle}
\text{$\eta_{h_1h_2}^*\eta_{h_1}(\gamma_{h_1}\otimes\text{Id})(\eta_{h_2})\in\sZ(\sR_1)=\Q\overline{\otimes}\sZ(\T)$, for every $h_1,h_2\in S$.}
\end{equation}

Viewing every $\eta\in\sU(\sR_1)$ as a measurable function $\eta:X\rightarrow\sU(\T)$, \eqref{cocycle} rewrites as $\eta_{h_1h_2}(x)^*\eta_{h_1}(x)\eta_{h_2}(\gamma_{h_1}^{-1}x)\in\sU(\sZ(\T))$, for every $h_1,h_2\in S$ and almost every $x\in X$. 
 Then $c : S\times X\rightarrow\sU$ given by $c(h,x)=q(\eta_h(x))$ is a 1-cocycle for $\gamma$.
Since $\sU(\sZ(\T))$ is a closed central subgroup of the compact Polish group $\sU(\T)$, $\sU$ is a compact Polish group with respect to the quotient topology.
In particular, $\sU$ is a $\sU_{\text{fin}}$ group (see \cite[Lemma 2.7]{Po05}). Since $S$ has property (T), $\gamma$ is built over $S\curvearrowright I$ and $S\curvearrowright I$ has infinite orbits, Theorem \ref{CS} implies that
$c$ is cohomologous to a homomorphism $\psi: S\rightarrow\sU$. 

Let $u:X\rightarrow\sU(\T)$ be a measurable map satisfying $q(u(x))c(h,x)q(u(\gamma_{h^{-1}}(x)))^{-1}=\psi_h$, for every $h\in S$ and almost every $x\in X$.
 Let $\xi: S\rightarrow\sU(\T)$ be  such that $q(\xi_h)=\psi_h$. Thus, we find a measurable map $\nu_h:X\rightarrow\sU(\sZ(\T))$ such that $u(x)\eta_h(x)u(\gamma_{h^{-1}}(x))^{-1}=\nu_h(x)\xi_h$, for every $h\in S$ and almost every $x\in X$.
Equivalently, $u\in\sU(\sR_1)$ and $\nu: S\rightarrow\sU(\sZ(\sR_1))$ satisfy $u\eta_h(\gamma_h\otimes\text{Id})(u)^*=\nu_h(1\otimes\xi_h)$, for every $h\in S$. This implies the claim.
 \end{proof}

We are now ready to finish the proof of the main assertion.
Let $g\in K=\pi^{-1}( S)$  and put $h=\pi(g)\in S$. Then $a=g\widehat{h}^{-1}\in A$ and $\theta_1(u_a)\in \sZ(\sR_1)$. Denoting $k_g=\theta_1(u_a)\nu_h$ and using Claim \ref{nu}, we get that  $k_g\in\sU(\sZ(\sR_1))$ and
\begin{equation}\label{w_g}
\text{$\theta_1(u_g)=k_g(v_{\widehat{\varepsilon(\pi(g))}}\otimes \xi_{\pi(g)})$, for every $g\in K$.}
\end{equation}

Our final claim is the following.

\begin{claim}\label{Ji}
There are maps $c:K\rightarrow C^{(I)}$, $e:K\rightarrow\sZ(\T)$ and $z\in \sU(\sZ(\sR_1))$ such that
$k_g=(v_{c_g}\otimes e_g)z^*(\gamma_{\pi(g)}\otimes\emph{Id})(z)$, for every $g\in K$.
\end{claim}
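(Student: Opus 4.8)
The plan is to recognise $k$ as a twisted $1$-cocycle, disintegrate over the centre of $\T$ so as to fall under the hypotheses of Theorem~\ref{jiang}, apply that theorem once, and then patch the components back together.

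First I would record the twisted cocycle relation satisfied by $k$. Writing $h_i=\pi(g_i)$ and using that $\theta_1$ is multiplicative together with \eqref{w_g}, Claim~\ref{nu}, and the fact that $\xi_{\pi(g)}$ commutes with $\sZ(\T)$, one gets by a direct computation
\[
k_{g_1g_2}=k_{g_1}\,(\gamma_{h_1}\otimes\text{Id})(k_{g_2})\,\big(v_{y(h_1,h_2)}\otimes d(h_1,h_2)\big),
\]
where $y(h_1,h_2)=\widehat{\varepsilon(h_1)}\,\widehat{\varepsilon(h_2)}\,\widehat{\varepsilon(h_1h_2)}^{-1}\in C^{(I)}$ is the $2$-cocycle of the extension $1\to C^{(I)}\to H\to D\to 1$ pulled back along $\varepsilon\circ\pi$, and $d(h_1,h_2)=\xi_{h_1}\xi_{h_2}\xi_{h_1h_2}^{-1}\in\sU(\sZ(\T))$ records the failure of $\xi$ to be a homomorphism (it is genuinely $\sZ(\T)$-valued because $q\circ\xi$ is a homomorphism, by Claim~\ref{nu}). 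Since $\sZ(\sR_1)=\Q\overline{\otimes}\sZ(\T)$ is abelian and $\gamma\otimes\text{Id}$ fixes every minimal projection $f$ of $\sZ(\T)$, cutting by $1\otimes f$ turns each component $k^{(f)}\colon K\to\sU(\Q)$ into a $1$-cocycle for the action $\sigma:=\gamma\circ\pi$ on $\Q$, twisted by $v_{y}\,d^{(f)}$ with $d^{(f)}\in\mathbb T$; here $d^{(f)}$ is a genuine scalar $2$-cocycle because $v_y$ is one and the coboundary of the $1$-cochain $k^{(f)}$ is automatically a $2$-cocycle.

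Next I would apply Theorem~\ref{jiang} a single time. Identify $\Q=\text{L}^\infty(\widehat{C^{(I)}})$, so that $\widehat X=C^{(I)}$ with $v_\bullet$ the canonical inclusion $C^{(I)}\hookrightarrow\sU(\Q)$. The group $K=\pi^{-1}(S)$ has finite index in $G$, hence property~(T); by Lemma~\ref{bover} the action $\sigma$ is built over $K\curvearrowright I$ (via $\pi$ then $\varepsilon$), and this $K$-action on $I$ has infinite orbits (because $\varepsilon$ is injective, $S$ is an infinite property~(T) group and $\text{Stab}_D(\cdot)$ is amenable, exactly as in Claim~\ref{T}); so by Theorem~\ref{CS} the actions $\sigma$, $\sigma^{\otimes_2}$, $\sigma^{\otimes_4}$ are weakly mixing and $\{\mathbb T\}$-cocycle superrigid. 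Fixing one minimal projection $f_0$ of $\sZ(\T)$ and solving the twisted relation for $v_y$, one sees that $v_{y(g,h)}=\overline{d^{(f_0)}(g,h)}\,w_{gh}^*w_g\sigma_g(w_h)$ with $w_g=(k^{(f_0)}_g)^*$, which is precisely the hypothesis of Theorem~\ref{jiang}. The part of its conclusion I need is that there is a map $\mathfrak c\colon K\to C^{(I)}$ with $v_{y(g,h)}=v_{\mathfrak c_{gh}}^*\,v_{\mathfrak c_g}\,\sigma_g(v_{\mathfrak c_h})$, i.e.\ the extension cocycle $v_y$ is a coboundary already within $C^{(I)}$.

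The main obstacle — and the final step — is to propagate this single $\mathfrak c$ to all components simultaneously, since a priori each application of Theorem~\ref{jiang} would produce its own primitive. Substituting $v_y=\partial(v_{\mathfrak c})$ into the twisted relation for an arbitrary component $k^{(f)}$ and setting $\ell^{(f)}_g:=k^{(f)}_g\,v_{\mathfrak c_g}^{-1}$, one checks that $\ell^{(f)}$ is a $1$-cocycle for $\sigma$ twisted only by the scalar $2$-cocycle $d^{(f)}$. Running the comultiplication argument of \cite[Lemma~2.9]{Ji15} with $\sigma^{\otimes_4}$-cocycle superrigidity shows $d^{(f)}$ is a scalar $2$-coboundary, $d^{(f)}(g,h)=\mathfrak e^{(f)}_{gh}\overline{\mathfrak e^{(f)}_g}\,\overline{\mathfrak e^{(f)}_h}$, so that $n^{(f)}_g:=\overline{\mathfrak e^{(f)}_g}\,\ell^{(f)}_g$ is a genuine $\sU(\Q)$-valued $1$-cocycle for $\sigma$; as $\Q$ is abelian this is a $\mathbb T$-valued $1$-cocycle, hence by $\{\mathbb T\}$-cocycle superrigidity it is cohomologous to a character $\psi_f\colon K\to\mathbb T$, say $n^{(f)}_g=\psi_f(g)\,w_f^*\sigma_g(w_f)$ with $w_f\in\sU(\Q)$. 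Therefore $k^{(f)}_g=\big(\mathfrak e^{(f)}_g\psi_f(g)\big)\,v_{\mathfrak c_g}\,w_f^*\sigma_g(w_f)$ with the same $\mathfrak c_g$ for every $f$. Reassembling over the minimal projections $f_1,\dots,f_m$ of $\sZ(\T)$, set $c_g:=\mathfrak c_g\in C^{(I)}$, $e_g:=\sum_{j}\big(\mathfrak e^{(f_j)}_g\psi_{f_j}(g)\big)f_j\in\sU(\sZ(\T))$ and $z:=\sum_j w_{f_j}\otimes f_j\in\sU(\sZ(\sR_1))$; using $\sigma_g=\gamma_{\pi(g)}$ on $\Q$ one obtains $k_g=(v_{c_g}\otimes e_g)\,z^*\,(\gamma_{\pi(g)}\otimes\text{Id})(z)$, which is the Claim. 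The points needing care are the (otherwise routine) bookkeeping of the three interlocking $2$-cocycles $y$, $d$, $d^{(f)}$ in the first step, and — decisive for the uniformity of $c_g$ — the observation that one application of Theorem~\ref{jiang} already yields a $C^{(I)}$-valued primitive of $v_y$, after which all residual data is scalar and disposed of by $\{\mathbb T\}$-cocycle superrigidity.
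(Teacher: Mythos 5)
Your proposal is correct and follows essentially the same route as the paper: derive the twisted $2$-cocycle relation for $k$, feed the $C^{(I)}$-valued extension cocycle into Theorem \ref{jiang} (the paper applies it once per minimal projection of $\sZ(\T)$ but, like you, only needs a single primitive $c$), and dispose of the residual scalar $2$-cocycles and $\mathbb T$-valued $1$-cocycles by cocycle superrigidity before reassembling over $\sZ(\T)$. The only slip is a sign in the propagation step: with the convention $v_{y(g,h)}=v_{\mathfrak c_{gh}}^*v_{\mathfrak c_g}\sigma_g(v_{\mathfrak c_h})$ the correctly untwisted cochain is $k^{(f)}_g v_{\mathfrak c_g}$ rather than $k^{(f)}_g v_{\mathfrak c_g}^{-1}$ (equivalently, replace $\mathfrak c$ by its inverse throughout), which only changes the final $c_g$ to $\mathfrak c_g^{-1}$ and does not affect the claim.
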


\begin{proof}[Proof of Claim \ref{Ji}.]
We define $s:K\times K\rightarrow C^{(I)}$  by $s_{g,h}=\widehat{\varepsilon(\pi(g))}\widehat{\varepsilon(\pi(h))}\widehat{\varepsilon(\pi(gh))}^*$ and  $d:K\times K\rightarrow\sU(\sZ(\T))$ by $d_{g,h}=\xi_{\pi(gh)}\xi_{\pi(h)}^*\xi_{\pi(g)}^*$.
Since $\theta_1(u_{g})\theta_1(u_{h})=\theta_1(u_{gh})$, \eqref{w_g} gives
\begin{equation}\label{2c}
\text{$v_{s_{g,h}}\otimes 1=\big(k_{gh}k_g^*(\gamma_{\pi(g)}\otimes\text{Id})(k_h)^*\big)\big(1\otimes d_{g,h}\big)$, for every $g,h\in K$.}
\end{equation}

Note that the map $K\times K\ni(g,h)\mapsto v_{s_{g,h}}\in \sU(\Q)$ is a $2$-cocycle for the action $K\curvearrowright^{\gamma\circ\pi}\Q$ and the map $K\times K\ni (g,h)\mapsto d_{g,h}\in\sU(\sZ(\T))$ is a $2$-cocycle for the trivial action. Since $\gamma$ is built over $S\curvearrowright I$, $\gamma\circ\pi$ is built over the action $K\curvearrowright I$ given by $k\cdot i=\pi(k)\cdot i$, for every $k\in K$ and $i\in I$. Since the action $K\curvearrowright I$ has infinite orbits, $\gamma\circ\pi$ is weakly mixing.
If $n\in\mathbb N$, then the action $(\gamma\circ\pi)^{\otimes n}$ is built over the diagonal product action $K\curvearrowright I^n$, which has infinite orbits.
 Since $K$ has property (T), Theorem \ref{CS} implies that $(\gamma\circ\pi)^{\otimes n}$ is $\sU_{\text{fin}}$-cocycle superrigid.
Altogether, we deduce that $\gamma\circ\pi$ satisfies the hypothesis of Theorem \ref{jiang}.

Let $\{p_1,\ldots,p_l\}$ be the minimal projections of $\sZ(\T)$. Let $1\leq i\leq l$.
Since $d_{g,h}p_i\in\mathbb Tp_i$, for every $g,h\in K$, by using \eqref{2c} and applying Theorem \ref{jiang} there are maps $c:K\rightarrow C^{(I)}$ and $e^i:K\rightarrow\mathbb T$ such that $v_{s_{g,h}}=v_{c_{gh}}v_{c_g}^*\gamma_{\pi(g)}(v_{c_h})^*$ and $d_{g,h}p_i=\overline{e_{gh}^i}{e_g^i}{e_h^i}p_i$, for all $g,h\in K$.
Define $e:K\rightarrow\sU(\sZ(\T))$ by letting $e_g=\sum_{i=1}^le_g^ip_i$, for every $g\in K$. Then $d_{g,h}=e_{gh}^*e_ge_h$, for all $g,h\in K$.
Using \eqref{2c}, we get that the map $K\ni g\mapsto (v_{c_g}^*\otimes e_g^*)k_g\in\sU(\sZ(\sR_1))$ is a $1$-cocycle for the action $K\curvearrowright^{\gamma\circ\pi\otimes\text{Id}}\sZ(\sR_1)=\Q\overline{\otimes}\sZ(\T)$. Since $\gamma\circ\pi$ is $\sU_{\text{fin}}$-cocycle superrigid, we deduce the existence of $z\in  \sU(\sZ(\sR_1))$ such that the claim holds.
\end{proof}

Finally, \eqref{w_g} and Claim \ref{Ji} imply that $\theta_1(u_g)=z^*(v_{c_g\widehat{\varepsilon(\pi(g))}}\otimes e_g\xi_{\pi(g)})z$, for every
 $g\in K$. Then $\delta:K\rightarrow H$ and $\rho:K\rightarrow\sU(\T)\subset\sU_k(\mathbb C)$ given by $\delta(g)=c_g\widehat{\varepsilon(\pi(g))}$ and $\rho(g)=e_g\xi_{\pi(g)}$ must be homomorphisms. 
Then $\theta_1(u_g)=z^*(v_{\delta(g)}\otimes\rho(g))z$, for every $g\in K$. By construction $A\subset K$ and $\delta(A)\subset C^{(I)}$.
 Moreover, if $g\in\ker(\delta)$, then we have that $\theta_1(u_g)=z^*(1\otimes\rho(g))z\in z^*(1\otimes\mathbb M_k(\mathbb C))z$. This implies that $\ker(\delta)$ must be finite. Since $G$ and thus $K$ are icc it follows that $\delta$ is injective. This finishes the proof of the main assertion.

To prove the moreover assertion, assume that $t=1$. Then $\theta:\M\rightarrow \Nn$ is a unital $*$-homomorphism and after replacing it by $\text{Ad}(u)\circ\theta$, for some $u\in\sU(\Nn)$, we have that $\theta(u_k)=\rho(k)v_{\delta(k)}$, for every $k\in K$,  for some homomorphisms $\rho:K\rightarrow\mathbb T$ and $\delta:K\rightarrow H$. Moreover, we may assume that $K$ is normal in $G$. If $g\in G$, then \begin{equation}\label{thetau_g}\text{$\rho(k)\theta(u_g)v_{\delta(k)}\theta(u_g)^*=\theta(u_{gkg^{-1}})=\rho(gkg^{-1})v_{\delta(gkg^{-1})}$, for every $k\in K$.}\end{equation}
 
 Write $\theta(u_g)^*=\sum_{h\in H}c_hv_h$. If $\epsilon>0$ is such that $F=\{h\in H\mid |c_h|>\epsilon\}$ is nonempty, then \eqref{thetau_g} implies that $\delta(k)F\delta(gkg^{-1})^{-1}=F$ and thus $\delta(k)FF^{-1}\delta(k)^{-1}=FF^{-1}$, for every $k\in K$. However, since $\delta(K)$ is an infinite property (T) group and $\text{C}_H(h)$ is virtually cyclic, the set $\{\delta(k)h\delta(k)^{-1}\mid k\in K\}$ is infinite, for every $h\in H\setminus\{1\}$. Thus, $FF^{-1}=\{1\}$, so $F$ consists of a single element. Since this holds for all $\epsilon>0$, we get that $\theta(u_g)\in\mathbb T(v_h)_{h\in H}$. Thus, $\delta$ and $\rho$ extend to homomorphisms $\rho:G\rightarrow\mathbb T$ and $\delta:G\rightarrow H$ such that $\theta(u_g)=\rho(g)v_{\delta(g)}$, for every $g\in G$. Since $\delta(A)\subset C^{(I)}$, the moreover assertion follows.
\end{proof}



 Theorem \ref{rigembed} leads to a complete description of all $*$-homomorphisms $\theta:\text{L}(G)\rightarrow\text{L}(H)^t$. To explain this, we assume the setting of Theorem \ref{rigembed} and introduce some terminology from \cite[Section 2]{PV21}. 
 
 Let $K<G$ be a finite index subgroup. 
  If $\varepsilon:K\rightarrow H$  and $\mu:K\rightarrow\mathscr U_s(\mathbb C)$ are homomorphisms, for some $s\in\mathbb N$, we denote by $\pi_{\varepsilon,\mu}:K\rightarrow \sU(\text{L}(H)\overline{\otimes}\mathbb M_s(\mathbb C))$ the homomorphism given by $\pi_{\varepsilon,\mu}(g)=v_{\varepsilon(g)}\otimes\mu(g)$ for  $g\in K$. If $\pi:K\rightarrow\sU(\mathcal S)$ is a homomorphism, where $\mathcal S$ is a tracial von Neumann algebra, we denote by
 $\text{Ind}_K^G(\pi):G\rightarrow\sU(\mathcal S\overline{\otimes}\mathbb M_{[G:K]}(\mathbb C))$ the induced homomorphism. Specifically, let $\varphi:G/K\rightarrow G$ be a map such that $\varphi(gK)\in gK$, for every $g\in G$, and define $c:G\times G/K\rightarrow K$ by letting $c(g,hK)=\varphi(ghK)^{-1}g\varphi(hK)\in K$, for every $g,h\in G$.
Identifying $\mathbb M_{[G:K]}(\mathbb C)=\mathbb B(\ell^2(G/K))$, we define $\text{Ind}_K^G(\pi)(g)(\xi\otimes\delta_{hK})=\pi(c(g,hK))\xi\otimes\delta_{ghK}$.
By \cite[Defintion 2.1]{PV21}, a homomorphism $G\rightarrow \sU(\text{L}(H)^t)$ is called standard if it is unitarily conjugate to a direct sum of homomorphisms of the form $\text{Ind}_K^G(\pi_{\varepsilon,\mu})$ induced from finite index subgroups $K<G$.
Theorem \ref{rigembed} implies that the restriction of any $*$-homomorphism $\theta:\text{L}(G)\rightarrow\text{L}(H)^t$  to $G=\{u_g\}_{g\in G}$ is standard. More precisely, we have:

%

\begin{thm}\label{explicit}
Let $\theta:\emph{L}(G)\rightarrow\emph{L}(H)^t$ be a unital $*$-homomorphism, for some $t>0$. Then $t\in\mathbb N$ and we can find $q\in\mathbb N$, a finite index subgroup $K_i<G$, an injective homomorphism $\varepsilon_i:K_i\rightarrow H$ and a unitary representation $\mu_i:K_i\rightarrow\mathscr U_{s_i}(\mathbb C)$, for some $s_i\in\mathbb N$,  for every $1\leq i\leq q$,  and a unitary $u\in\emph{L}(H)^t=\emph{L}(H)\overline{\otimes}\mathbb M_t(\mathbb C)$  such that $\sum_{i=1}^q[G:K_i]s_i=t$ and
$$\text{$u\theta(u_g)u^*=\emph{diag}(\emph{Ind}_{K_1}^G(\pi_{\varepsilon_1,\mu_1})(g),\ldots,\emph{Ind}_{K_q}^G(\pi_{\varepsilon_q,\mu_q})(g))$, for every $g\in G$.}$$
\end{thm}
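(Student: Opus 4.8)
The plan is to deduce the statement from Theorem \ref{rigembed} by a Mackey-machine argument. Theorem \ref{rigembed} already yields $t\in\mathbb N$, together with a finite index subgroup $K<G$ containing $A$, injective homomorphisms $\delta_i\colon K\to H$ and unitary representations $\rho_i\colon K\to\mathscr U_{t_i}(\mathbb C)$ for $1\le i\le q$ with $\sum_i t_i=t$, and a unitary $w\in\text{L}(H)\overline\otimes\mathbb M_t(\mathbb C)$ such that, after replacing $\theta$ by $\text{Ad}(w)\circ\theta$, we have $\theta(u_g)=\text{diag}(v_{\delta_1(g)}\otimes\rho_1(g),\dots,v_{\delta_q(g)}\otimes\rho_q(g))$ for all $g\in K$. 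Replacing $K$ by its normal core --- which still contains $A$, since $A\lhd G$ and $A<K$ --- and refining the $\pi_{\delta_i,\rho_i}$ accordingly, I may assume $K\lhd G$. It then remains to show that the homomorphism $\pi\colon G\to\mathscr U(\text{L}(H)\overline\otimes\mathbb M_t(\mathbb C))$, $g\mapsto\theta(u_g)$, whose restriction to $K$ is now fully understood, is a direct sum of inductions of homomorphisms of the form $\pi_{\varepsilon,\mu}$.

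Since $K\lhd G$, each $\theta(u_g)$ normalizes $\theta(\text{L}(K))$, so in the $q\times q$ block decomposition above the nonzero blocks of $\theta(u_g)$ intertwine $\pi_{\delta_i,\rho_i}$ with $\pi_{\delta_j,\rho_j}\circ\text{Ad}(g)$. Exactly as in the proof of Claim \ref{P0} --- using that the $\delta_i$ are injective, that each $\delta_i(K)$ is an infinite property (T) group, and that $\text{C}_D(d)$ is virtually cyclic for every $d\in D\setminus\{1\}$ --- any such nonzero block forces $\delta_i$ to be conjugate in $H$ to $\delta_j\circ\text{Ad}(g^{\pm 1})$. Consequently $G$ acts on the set of isotypic components of $\pi|_K$; picking a representative $i$ in each $G$-orbit, with stabilizer $K_i\le G$ (a finite index subgroup containing $K$), the sub-representation of $\pi$ carried by that orbit is, by the Mackey correspondence for the normal subgroup $K$, unitarily conjugate to $\text{Ind}_{K_i}^G(\sigma_i)$ for a $K_i$-representation $\sigma_i$ supported on the $i$-th isotypic block, with $\sigma_i|_K$ a multiple of $\pi_{\delta_i,\rho_i}$. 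Thus $\pi\cong\bigoplus_i\text{Ind}_{K_i}^G(\sigma_i)$, and comparing dimensions gives $\sum_i[G:K_i]s_i=t$, where $s_i$ is the size of $\sigma_i$.

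It remains to recognize each $\sigma_i$ as a $\pi_{\varepsilon_i,\mu_i}$. After a further conjugation we may take $\sigma_i(k)=v_{\delta_i(k)}\otimes\rho_i(k)\otimes 1$ for $k\in K$, so for $l\in K_i$ the unitary $\sigma_i(l)$ implements the automorphism $v_{\delta_i(k)}\otimes\rho_i(k)\otimes 1\mapsto v_{\delta_i(lkl^{-1})}\otimes\rho_i(lkl^{-1})\otimes 1$ of $\sigma_i(\text{L}(K))$. Writing $\sigma_i(l)=\sum_{h\in H}v_h\otimes m_h$ and comparing Fourier supports in the identity $\sigma_i(l)(v_{\delta_i(k)}\otimes\rho_i(k)\otimes 1)=(v_{\delta_i(lkl^{-1})}\otimes\rho_i(lkl^{-1})\otimes 1)\sigma_i(l)$, the argument from the last paragraph of the proof of Theorem \ref{rigembed} --- with $\mathbb T$ replaced by a matrix amplification --- shows the support collapses to a single $h=\varepsilon_i(l)\in H$, so that $\sigma_i(l)=v_{\varepsilon_i(l)}\otimes\mu_i(l)$ for some unitary $\mu_i(l)\in\mathscr U_{s_i}(\mathbb C)$. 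Here $\varepsilon_i(l)$ is the unique $h\in H$ with $h\delta_i(k)h^{-1}=\delta_i(lkl^{-1})$ for all $k\in K$, uniqueness holding because $\text{C}_H(\delta_i(K))=\{1\}$ --- which follows from property (T) of $\delta_i(K)$ together with the hypotheses that $\text{Stab}_D(j)$ is amenable and $\text{C}_D(d)$ is virtually cyclic. Uniqueness makes $\varepsilon_i\colon K_i\to H$ a homomorphism extending $\delta_i$, injective since $K_i$ is ICC, and then forces $\mu_i\colon K_i\to\mathscr U_{s_i}(\mathbb C)$ to be a genuine unitary representation; hence $\sigma_i=\pi_{\varepsilon_i,\mu_i}$. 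Substituting back gives the stated formula.

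I expect the main obstacle to be the second step: carrying out the Mackey decomposition rigorously in the von Neumann algebraic setting, i.e.\ controlling the off-diagonal blocks of $\theta(u_g)$ well enough that the isotypic components genuinely split into $G$-orbits with the claimed stabilizers and that the orbit sub-representations are honest inductions. This is exactly where one must re-invoke the intertwining estimates of the proof of Claim \ref{P0} --- the interplay of injectivity of the $\delta_i$, property (T) of $\delta_i(K)$, and virtual cyclicity of centralizers in $D$. By comparison, the extension of $\delta_i$ to $\varepsilon_i$ and the extraction of $\mu_i$ in the last step are routine, being essentially a transcription of the closing paragraph of the proof of Theorem \ref{rigembed}.
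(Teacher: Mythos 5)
Your proposal is correct and follows essentially the same route as the paper: reduce to a normal finite-index subgroup, use the intertwining rigidity (injectivity of the $\delta_i$, property (T) of $\delta_i(K)$, virtual cyclicity of centralizers in $D$) to show that any nonzero intertwiner between the blocks is of the form $v_h\otimes U$, deduce that $G$ permutes the isotypic components with stabilizers $K_i$, identify the restriction to $K_i$ as $\pi_{\varepsilon_i,\mu_i}$ via the Fourier-support collapse, and conclude by induction. The "Mackey decomposition" step you flag as the main obstacle is exactly what the paper carries out concretely via Claim \ref{intertwiners} and the computation of $\theta(\mathrm{L}(K))'\cap\mathrm{L}(H)^t$ and its minimal central projections.
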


\begin{proof}
We keep the notation from the proof of Theorem \ref{rigembed}. By Theorem \ref{rigembed}, $t\in\mathbb N$ and there are $t_1,\ldots, t_p\in\mathbb N$ with $t_1+\cdots+t_p=t$, for some $p\in\mathbb N$, a finite index subgroup $K<G$, an injective homomorphism $\delta_i:K\rightarrow H$ and a unitary representation $\rho_i:K\rightarrow\sU_{t_i}(\mathbb C)$, for every $1\leq i\leq p$, and a unitary $w\in \text{L}(H)^t=\text{L}(H)\overline{\otimes}\mathbb M_t(\mathbb C)$ such that 
$$\text{$ w\theta(u_g)w^*=\text{diag}(v_{\delta_1(g)}\otimes\rho_1(g),
\ldots, v_{\delta_p(g)}\otimes\rho_p(g))
$, for every $g\in K$.}$$

We may assume that $K<G$ is a normal subgroup.
By decomposing each $\rho_i, 1\leq i\leq q$, into a direct sum of irreducible representations, we may assume that $\rho_i$ is irreducible, for every $1\leq i\leq p$.
Further, we can find $s_1,\ldots,s_r,d_1,\ldots,d_r\in\mathbb N$, for some $r\in\mathbb N$, with $s_1d_1+\cdots+s_rd_r=t$, an injective homomorphism $\gamma_i:K\rightarrow H$ and an irreducible representation $\sigma_i:K\rightarrow\sU_{s_i}(\mathbb C)$, for every $1\leq i\leq r$, and a unitary $z\in \text{L}(H)^t$ such that for every $1\leq i\leq r$,
$\gamma_i$ is not conjugate to $\gamma_j$ or $\sigma_i$ is not unitarily conjugate to $\sigma_j$,
and after replacing $\theta$ by $\text{Ad}(z)\circ\theta$ we have
\begin{equation}\label{thetaonK}
\text{$\theta(u_g)=\text{diag}(v_{\gamma_1(g)}\otimes\sigma_1(g)\otimes I_{d_1},
\ldots, v_{\gamma_r(g)}\otimes\sigma_r(g)\otimes I_{d_r})
$, for every $g\in K$.}
\end{equation}

Here, we call homomorphisms $\gamma,\gamma':K\rightarrow H$ conjugate if $\gamma(g)=h\gamma'(g)h^{-1}$, for every $g\in K$, for some $h\in H$. We denote by $I_d\in\mathbb M_d(\mathbb C)$ the identity matrix, for every $d\in\mathbb N$, and consider the natural unital embedding $\bigoplus_{i=1}^r\big(\text{L}(H)\otimes\mathbb M_{s_i}(\mathbb C)\otimes\mathbb M_{d_i}(\mathbb C))\subset \text{L}(H)\otimes\mathbb M_t(\mathbb C).$ 

We continue with the following claim:

\begin{claim}\label{intertwiners}
Let $\gamma,\gamma':K\rightarrow H$ be injective homomorphisms
and $\sigma:K\rightarrow\sU_{s}(\mathbb C), \sigma':K\rightarrow\sU_{s'}(\mathbb C)$ be irreducible representations, for some $s,s'\in\mathbb N$.
 Assume that there exists a nonzero $x\in \emph{L}(H)\otimes\mathbb M_{s,s'}(\mathbb C)$ such that   $(v_{\gamma(g)}\otimes\sigma(g))x=x(v_{\gamma'(g)}\otimes\sigma'(g))$, for every $g\in K$.

 Then $s=s'$ and there exist $h\in H$ and $U\in\sU_s(\mathbb C)$ such that for every $g\in K$ we have that $\gamma(g)h=h\gamma'(g)$ and
$\sigma(g)U=U\sigma'(g)$. 
Moreover, if $y\in \emph{L}(H)\otimes\mathbb M_{s}(\mathbb C)$ satisfies that $(v_{\gamma(g)}\otimes\sigma(g))y=y(v_{\gamma'(g)}\otimes\sigma'(g))$, for every $g\in K$, then   $y=\alpha(v_h\otimes U)$, for some $\alpha\in\mathbb C$.

\end{claim}

\begin{proof}
Write $x=\sum_{h\in H}v_h\otimes c_h$, where $c_h\in\mathbb M_{s,s'}(\mathbb C)$ and $\sum_{h\in H}\|c_h\|_2^2=\|x\|_2^2<\infty$. Then 
\begin{equation}\label{equivariance}
    \text{$c_{\gamma(g)h\gamma'(g)^{-1}}=\sigma(g)c_h\sigma'(g)^*$, for every $g\in K, h\in G.$}
\end{equation}
  Let $\varepsilon>0$ such that $F=\{h\in H\mid \|c_h\|_2>\varepsilon\}$ is nonempty. Then \eqref{equivariance} implies that $\gamma(g)F\gamma'(g)^{-1}=F$, for every $g\in K$.   Thus, $\gamma(g)FF^{-1}\gamma(g)^{-1}=FF^{-1}$, for every $g\in K$. The same argument as in the proof of the moreover assertion of Theorem \ref{rigembed} implies that $FF^{-1}=\{1\}$, so $F$ consists of a single element. Since this holds for every small enough $\varepsilon>0$, we conclude that $\{h\in H\mid c_h\not=0\}$ has a single element. Thus, $x=v_h\otimes c$, for some $h\in H$ and nonzero $c=c_h\in\mathbb M_{s,s'}(\mathbb C)$. Hence, we have that $v_{\gamma(g)h}\otimes\sigma(g)c=v_{h\gamma'(g)}\otimes c\sigma'(g)$, which implies that $\gamma(g)h=h\gamma'(g)$ and $\sigma(g)c=c\sigma'(g)$, for every $g\in K$.
  Since $\sigma,\sigma'$ are irreducible, the latter relation implies that $s=s'$ and $c=\alpha U$, for some nonzero $\alpha\in \mathbb C$ and $U\in\sU_s(\mathbb C)$. This proves the main assertion.

  Let now $y\in \text{L}(H)\otimes\mathbb M_{s}(\mathbb C)$ such that $(v_{\gamma(g)}\otimes\sigma(g))y=y(v_{\gamma'(g)}\otimes\sigma'(g))$, for every $g\in K$. Note that $(v_{\gamma(g)}\otimes\sigma(g))(v_h\otimes U)=(v_h\otimes U)(v_{\gamma'(g)}\otimes\sigma'(g))$, for every $g\in K$. Thus, denoting $z=y(v_h\otimes U)^*$, we get that $(v_{\gamma(g)}\otimes\sigma(g))z=z(v_{\gamma(g)}\otimes\sigma(g))$, for every $g\in K$. By the first part of the proof we get that $z=\beta(v_k\otimes V)$, for some nonzero $\beta\in \mathbb C$, $k\in H$ and $V\in \sU_s(\mathbb C)$ such that $\gamma(g)k=k\gamma(g)$ and $\sigma(g)V=V\sigma(g)$, for every $g\in K$. As $\gamma:K\rightarrow H$ is injective and $\text{C}_H(h)$ is virtually cyclic, for every $h\in H\setminus\{1\}$, this forces $k=1$. Since $\sigma$ is irreducible, we also get that $V\in\mathbb T1$. Altogether, we get that $z\in\mathbb C1$, which proves the moreover assertion.
\end{proof}

Denote $I=\{1,\ldots,r\}$ and
for every $i\in I$, let $f_i=1\otimes I_{d_i}\otimes I_{s_i}$.
Combining \eqref{thetaonK} with Claim \ref{intertwiners} we get that $\theta(\text{L}(K))'\cap \text{L}(H)^t=\bigoplus_{i=1}^r(1\otimes I_{s_i}\otimes\mathbb M_{d_i}(\mathbb C))$. 
Since $K<G$ is normal, $\theta(u_g)$ normalizes $\theta(\text{L}(K))$ and thus $\sZ(\theta(\text{L}(K))'\cap \text{L}(H)^t)=\bigoplus_{i=1}^r\mathbb Cf_i$, for every $g\in G$. 
Hence, there exists an action $G\curvearrowright I$ such that $\theta(u_g)f_i\theta(u_g)^*=f_{g\cdot i}$, for every $g\in G$ and $i\in I$. Let $J\subset I$ be a set which intersects every $G$-orbit exactly once. 

Next, fix $i\in J$ and denote $K_i=\{g\in G\mid g\cdot i=i\}$. 
Then \eqref{thetaonK} implies that $K<K_i$.
Let $h\in K_i$. If $g\in K$, then since $K<G$ is normal, $hgh^{-1}\in K,$ and \eqref{thetaonK} gives that \begin{equation}\label{conjug}\text{$\theta(u_{hgh^{-1}})=\text{diag}(v_{\gamma_1(hgh^{-1})}\otimes\sigma_1(hgh^{-1})\otimes I_{d_1},\ldots, v_{\gamma_r(hgh^{-1})}\otimes\sigma_r(hgh^{-1})\otimes I_{d_r})$. }
\end{equation}

Since $\theta(u_{hgh^{-1}})\theta(u_h)=\theta(u_h)\theta(u_g)$, we get that $\theta(u_{hgh^{-1}})(\theta(u_h)f_i)=(\theta(u_h)f_i)\theta(u_g)$.
By combining \eqref{thetaonK} and \eqref{conjug} we conclude that for every $g\in K$ we have
\begin{equation}
\text{$(v_{\gamma_i(hgh^{-1})}\otimes\sigma_i(hgh^{-1})\otimes I_{d_i})(\theta(u_h)f_i)=(\theta(u_h)f_i)(v_{\gamma_i(g)}\otimes\sigma_i(g)\otimes I_{d_i})$.}
\end{equation}
By applying the moreover part of Claim \ref{intertwiners}, we get that $\theta(u_h)f_i=v_{\varepsilon_i(h)}\otimes \mu_i(h)$, for some $\varepsilon_i(h)\in H$ and $\mu_i(h)\in\sU_{s_id_i}(\mathbb C)$.
Then $\varepsilon_i:K_i\rightarrow H$ and $\mu_i:K_i\rightarrow\sU_{s_id_i}(\mathbb C)$ must be homomorphisms such that $\varepsilon_i(g)=\gamma_i(g)$ and $\mu_i(g)=\sigma_i(g)\otimes I_{d_i}$, for every $g\in K$. Thus, in the notation introduced before this proof, we have that $\theta(u_h)f_i=\pi_{\varepsilon_i,\mu_i}(h)$, for every $h\in K_i$.

Let $e_i=\sum_{g\in G}f_{g\cdot i}\in \sZ(\theta(\text{L}(K))'\cap \text{L}(H)^t)$. Then $e_i\in\theta(\text{L}(G))'\cap\text{L}(H)^t$, hence $e_i\in \sZ(\theta(\text{L}(G))'\cap \text{L}(H)^t)$. Since $e_i=\sum_{g\in G/K_i}\theta(u_g)f_i\theta(u_g)^*$ and the projections $\{\theta(u_g)f_i\theta(u_g)^*\mid g\in G/K_i\}$ are pairwise orthogonal, the homomorphism  $G\ni g\mapsto \theta(u_g)e_i\in\sU(e_i\text{L}(H)^te_i)$ is unitarily conjugate to the induced homomorphism $\text{Ind}_{K_i}^G(\pi_{\varepsilon_i,\mu_i})$. 

Since $\theta(u_g)=\sum_{i\in J}\theta(u_g)e_i$, for every $g\in G$, the conclusion follows.
\end{proof}

\section{Computations of invariants of II$_1$ factors}

The goal of this section is to prove Corollary \ref{continuum} and Theorem \ref{endo}.

\begin{proof}[Proof of Corollary \ref{continuum}]
By \cite[Corollary 1.7]{CIOS1} and its proof, for every infinite set of primes $\mathcal P$ we have a property (T) group $V_{\mathcal P}\in\W\R(A_{\mathcal P},B)$, where $A_\mathcal P=\bigoplus_{p\in\mathcal P}\mathbb Z/p\mathbb Z$ and $B$ is a non-trivial, ICC subgroup of a hyperbolic group. The group $B$ is built in \cite[Proposition 2.11]{CIOS1} and arises as a subgroup of $G/\ll g^k \rr$, where $G$ is a torsion-free hyperbolic group, $g\in G\setminus\{1\}$ and $k\in\mathbb N$ sufficiently large. By Corollary \ref{Cor:VCCquot}, the group $G/\ll g^k \rr$ and thus $B$ is VVC. This implies that $V_\mathcal P\in\mathcal C_0$.

Let $I$ be a one-parameter family of infinite sets of primes such that $\mathcal P\not\subset\mathcal P'$, for all $\mathcal P,\mathcal P'\in I$ with $\mathcal P\not=\mathcal P'$.
We claim that the continuum of groups $(V_\mathcal P)_{\mathcal P\in I}$ satisfies the conclusion.
Since $V_\mathcal P$ is W$^*$-superrigid by \cite[Theorem 1.3]{CIOS1}, for every $\mathcal P$, it remains to prove that $\text{L}(V_\mathcal P)\not\hookrightarrow_{\text{s}}\text{L}(V_{\mathcal P'})$, for all $\mathcal P,\mathcal P'\in I$ with $\mathcal P\not=\mathcal P'$. Otherwise, since $V_\mathcal P,V_{\mathcal P'}\in\mathcal C_0$, Theorem \ref{rigembed} would imply the existence of a finite index subgroup $W\subset V_{\mathcal P}$  and an injective homomorphism $A_{\mathcal P}\subset W$ and $\delta:W\rightarrow V_{\mathcal P'}$ such that $\delta(A_{\mathcal P})\subset A_{\mathcal P'}$. Since $A_\mathcal P$ contains an element of prime order $p$ if and only if $p\in \mathcal P$, we get that $\mathcal P\subset\mathcal P'$, which is a contradiction.
\end{proof}

\begin{proof}[Proof of Theorem \ref{endo}] Lemma \ref{trivialout} gives a non-elementary VCC hyperbolic group $H$ with property (T), trivial abelianization, and $\text{Out}(H)=1$. Thus, by Theorem \ref{Thm:Group} we can find a quotient $G$ of $H$ such that $G\in \WR(\ZZ,B\ca I )$, where $B\ca I$ is a transitive action with finite stabilizers and $B$ is a non-elementary VCC hyperbolic group whose injective endomorphisms are inner. Moreover, $G$ has property (T) and trivial abelianization since $H$ has these properties.

Let $\M=\text{L}(G)$. By Theorem \ref{rigembed}, if $\theta:\M\rightarrow \M^t$ is a unital $*$-homomorphism, for some $t>0$, then $t\in\mathbb N$. Thus, $\mathcal F(\M)=\{1\}$ and $\mathcal F_s(\M)=\mathbb N$.

 Let $\theta:\M\rightarrow\M$ be a unital $*$-homomorphism. Since $G$ has no non-trivial characters, the moreover part of  Theorem \ref{rigembed} gives an injective homomorphism $\delta:G\rightarrow G$  and a unitary $w\in\M$ such that $\delta(\mathbb Z^{(I)})\subset\mathbb Z^{(I)}$ and $\theta(u_g)=wu_{\delta(g)}w^*$, for every $g\in G$. Let $\beta:B\rightarrow B$ be the homomorphism given by $\beta(g\mathbb Z^{(I)})=\delta(g)\mathbb Z^{(I)}$, for every $g\in B\equiv G/\mathbb Z^{(I)}$. Then $\ker(\beta)=\{g\mathbb Z^{(I)}\mid \delta(g)\in\mathbb Z^{(I)}\}=\delta^{-1}(\mathbb Z^{(I)})/\mathbb Z^{(I)}$. Since $\delta$ is injective, $\delta^{-1}(\mathbb Z^{(I)})$ is abelian and thus $\ker(\beta)$ is abelian. Since $B$ is an ICC hyperbolic group, it has no non-trivial abelian normal subgroups. Therefore, $\beta$ is injective so it must be an inner automorphism of $B$. Applying  Corollary \ref{injectiveOut} implies that $\delta$ is an inner automorphism of $G$ and so $\theta$ is an inner automorphism of $\M$. This proves that $\text{End}(\M)=\text{Inn}(\M)$ and finishes the proof.
\end{proof}

\paragraph{Conflicts of interest statement} Conflicts of interest: none. 

\paragraph{Data Availability} There are no external data associated to this paper.

\end{document}